\newtheorem{lemma}{Lemma}[section]
\newtheorem{proposition}{Proposition}[section]
\newtheorem{corollary}{Corollary}[section]
\newtheorem{remark}{Remark}[section]
\newtheorem{claim}{Claim}[section]
\newtheorem{theorem}{Theorem}
\newtheorem{definition}{\emph{Definition}}[section]
\newcommand{\pr}{\mathbb{P}}
\newcommand{\Z}{ \mathbb{Z}}
\numberwithin{equation}{section}
\definecolor{miverde}{RGB}{66,178,66} 
\title{Convergence of the  one-dimensional contact process with two types of particles and priority}
\author{Mariela Pentón Machado}
\begin{document}
\maketitle
\begin{abstract}
We consider a symmetric finite-range contact process on $\mathbb{Z}$ with two types of particles (or infections), which propagate according to 
the same supercritical rate and die (or heal) at rate $1$.    Particles  of type $1$ can enter any site in $(-\infty ,0]$ that is empty or occupied by a particle of type $2$ and, analogously, particles of type $2$ can enter any site in $[1,\infty)$ that is empty or occupied by a particle of type $1$. Also, almost one particle can occupy each site. We prove that the process with initial configuration  $\mathds{1}_{(-\infty,0]}+2\mathds{1}_{[1,\infty)}$ converges in distribution to an invariant measure different from the non trivial invariant measure of the classic contact process.  In addition, we prove that for any initial configuration the process converges to a convex combination of four invariant measures.
\end{abstract}

%%%%%%%%%%%%%%%%%%%%%%%%%%%%%%%%%%%%%%%%%%%%%%
%%%% Main text entry area:

\section{Introduction}
In this work, we study the set of invariant measures of the contact process with two types of particles and priority. This process is a stochastic process that can be interpreted as the temporal evolution of a population that has two different species and each of them has a favorable region in the environment.

The \emph{classic contact process} was introduced in \cite{Harris} and is a process widely studied in the literature. In this process, every infected individual can propagate the infection  at rate $\lambda$ to some neighbor at distance $R$ and it becomes healthy at rate $1$. This process also can be interpreted as the time evolution of a certain population, where a site is now ``occupied" (in correspondence to ``infected") or ``empty" (in correspondence to ``healthy"). The classic contact process presents a dynamical phase transition, namely:  there exists a critical value $\lambda_c$ for the infection rate such that if $\lambda$ is larger than $\lambda_c$, there is a non-trivial invariant measure $\mu$ different from $\delta_{\emptyset}$. 

 The contact process with two types of particles and priority is a  continuous-time Markov process $\{\zeta_t\}_{t\geq 0}$ on $\{0,1,2\}^\mathbb{Z}$. If $\zeta_t(x)=i$, then the site $x$ is occupied at time $t$ by a particle of type $i$ ($i=1,2$) and if $\zeta_t(x)=0$, then the site $x$ is empty at time $t$. 
  We denote  the flip rates at  $x$ in a configuration $\zeta \in \{0,1,2\}^{\mathbb{Z}}$ by $c(x,\zeta, \cdot)$ and these are defined as follows
 $$ \begin{array}{ll}
 c(x, \zeta, 1\rightarrow 0 )= c(x, \zeta,2\rightarrow 0)= 1,\\
 c(x, \zeta, 0 \rightarrow i)=   \lambda \underset{y: \hspace{0.1cm} 0<||x-y||\leq R }{\sum} \mathds{1}_{\zeta(y)=i} , i= 1,2,\\ 
  c(x, \zeta, 2 \rightarrow 1)=   \lambda \underset{y: \hspace{0.1cm}  0<||x-y||\leq R }{\sum} \mathds{1}_{\zeta(y)=1}\mathds{1}_{\{x \in(-\infty,0]\}} ,\\
  c(x, \zeta, 1 \rightarrow 2)=   \lambda  \underset{y: \hspace{0.1cm}   0<||x-y||\leq R }{\sum} \mathds{1}_{\zeta(y)=2}\mathds{1}_{\{x \in [1,\infty)\}}.
\end{array}
$$
The above flip rates give the following rules for the
dynamics:
\begin{itemize}
    \item  a site occupied by a particle of type $i$ becomes  empty with rate $1$;
    \item  a  particle of type $i$   gives birth to a particle of type $i$ at sites within the range $R$ with rate $\lambda$, but
    \item  type $ 1 $  particles cannot occupy places occupied by  type $ 2 $ particles in $ [1, \infty) $ and, vice versa,  type $ 2 $ particles cannot occupy places occupied by  type $ 1 $ particles in $ (- \infty, 0] $.
\end{itemize}
We consider $ R\geq 1 $ and restrict the process to the supercritical case, where $ \lambda> \lambda_ {c} = \lambda_ {c} (R) $. This process can be interpreted as the time evolution of a population in which there are two types of individuals, type $ 1 $ and type $ 2 $. Each type of individual has a priority zone, type $ 1 $ has priority in $ (- \infty, 0] $ and type $ 2 $ in $ [1, \infty) $. This model is inspired by the Grass-Bushes-Trees model, introduced in \cite{GBT}, in this case type $ 1 $ individuals have priority throughout the environment.

  We denote by $\mu_1$ (resp. $\mu_2$) the measure in $\{0,1,2\}^{\mathbb{Z}}$ supported on the configurations without particles of type $2$ (resp.  type $1$), such that this measure restricted to $\{0,1\}^{\mathbb{Z}}$ (resp. $\{0,2\}^{\mathbb{Z}}$) is the non-trivial invariant measure for the classic contact process, $\mu$.   Note that if the initial configuration only has one type of particle, the process with two types of particles is the same as the classic contact process. Therefore,  $\mu_1$ and $\mu_2$ are both invariant measures for the contact process with two types of particles. In the first theorem of this paper, we prove that, starting with the initial configuration
$ \mathds{1}_{(- \infty, 0]} + 2 \mathds{1}_{[1, \infty)} $, the contact process with two types of particles converges to an invariant measure $ \nu $ , which is different from $ \mu_1 $ and $ \mu_2 $. In our second result, we show  that for any initial configuration, the contact process with two types of particles converges to a convex combination of the four measures $ \delta_{\emptyset} $, $ \mu_1 $, $ \mu_2 $ and $ \nu $.

The paper is organized as follows. In Section \ref{Preliminaries}, we introduce the notation and state our main two results. In Section \ref{Moreresultskdenpendent}, we recall tools from oriented percolation and the Mountford-Sweet renormalization introduced in \cite{MountfordSweet}. In Section \ref{Convergence results}, we prove our first main result.  In Subsection \ref{Proof of invariant measure}, we prove the existence of the invariant measure $\nu$ supported in the set of configurations in $\{0,1,2\}^{\mathbb{Z}}$ with infinitely many particles of type $1$ and infinitely many particles of type $2$. Also, in this subsection, we create all the tools to finally prove Theorem $1$ in Subsection \ref{Proof of main Theorem}. In Section \ref{Proof theo 2}, we prove Theorem $2$.
\section{Preliminaries and statements of the main results}\label{Preliminaries}
\textbf{Notations} We denote by $||\cdot||$ the euclidean norm in $\mathbb{R}$ and we use $|\cdot|$ for the cardinality of subsets in $\mathbb{R}$ and $\mathbb{R}^{2}$.  During all the work, we refer to the contact process with two types of particles and priority as the \emph{two-type contact process} and the process with only one type of particle as the classic contact process.    For the initial configuration $\mathds{1}_{(-\infty,0]}+2\mathds{1}_{[1,\infty)}$, we denote the two-type contact process  by  $\zeta^{\textbf{1},\textbf{2}}_{t}$.
 We stress that, during the paper, the letter $\xi$ refers to the classic contact process, and $\zeta$ refers to the two-type contact process.  To simplify the notation,  throughout  the paper we identify  $I\cap\mathbb{Z}$ with $I$ for every spatial interval. Also, we identify every configuration $\xi$ in $\{0,1\}^{\mathbb{Z}}$ with the subset $\{x\in \mathbb{Z}: \xi(x)=1\}$.
In addition, we identify every $\zeta$ in $\{0,1,2\}^{\mathbb{Z}}$ with the disjoint subsets  $A=\{x\in \mathbb{Z}: \zeta(x)=1\}$ and $B=\{y\in \mathbb{Z}:\zeta(y)=2\}$. 

\noindent\textbf{The classic contact process.} To define the classic contact process with range $R\in \mathbb{N}$ and rate of infection $\lambda>0$, we consider a collection of independent Poisson point processes (PPP)  on $[0,\infty)$
\begin{align*}
&\{P^{x}\}_{x \in \mathbb{Z}}\text{ with rate 1},\quad\{P^{x\rightarrow y}\}_{\{x,y \in \mathbb{Z}: \hspace{0.2cm} 0<||x-y||\leq R\}} \text{ with rate }\lambda.
\end{align*}
All these processes are defined on a probability space $(\Omega, \mathcal{F}, \pr)$. Graphically, we  place a cross mark at the point $(x,t)\in \mathbb{Z}\times [0, +\infty)$
whenever $t$ belongs to the Poisson process $P^x$. In addition, we place an arrow following the direction from  $x$ to $y$  whenever $t$ belongs to the Poisson process $P^{x\rightarrow y}$. We  denote by $\mathcal{H}$  the collection of these marks in $\mathbb{Z}\times[0,\infty)$, this is a \emph{Harris construction} (see Figure \ref{HarrisConstrution}). We denote by $\mathcal{F}_t$ the $\sigma$-algebra generated by the collection of PPP until time $t$. 
 \begin{figure}[h]
	\centering
		  \includegraphics[scale=0.2,unit=1mm]{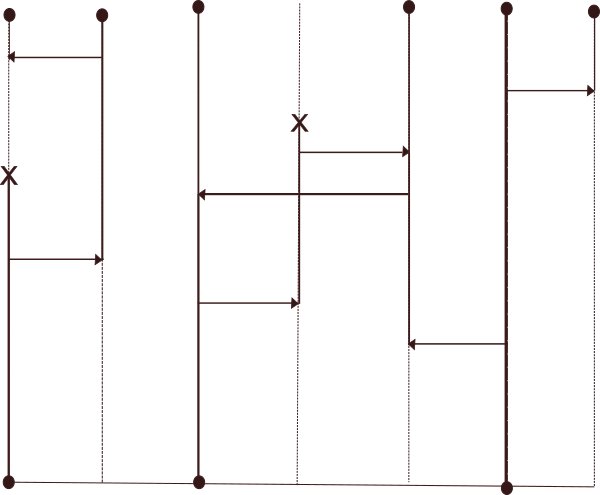}
   \caption{An example of a Harris construction for $R>1$.}\label{HarrisConstrution}
   \end{figure}

A \emph{path} in $\mathcal{H}$ is an oriented path that follows the  positive direction of time $t$, passes along the arrows in the direction of them and does not pass through any cross mark. More precisely,  a path from $(x,s)$ to $(y,t)$, with $0<s<t$, is a piecewise constant function $\gamma:[s,t]\rightarrow \mathbb{Z}$ such that:
\begin{itemize}
 \item{$ \gamma(s)=x, \gamma(t)=y$},
 \item{$\gamma(r)\neq \gamma(r-)$ only if  $r \in P^{\gamma(r-)\rightarrow \gamma(r)}$}\footnote{The notation $r\in P^{x\rightarrow y}$ means that $r\in (0,\infty)$ is a jump time of the Poisson process $P^{x\rightarrow y}$. },
 \item{$\forall r \in [s,t], r \notin  P^{\gamma(r)}$.}
\end{itemize}
In this case, we say that $\gamma$ connects $(x,s)$ and $(y,t)$. Moreover, if such a path exists, we write $(x,s)\rightarrow(y,t)$.

 For  $A$, $B$  and  $C$ subsets of $\mathbb{Z}$ and $0\leq s<t$, we say that $A \times \{s \}$ is connected with $B\times\{t \} $ inside $C$, if there exist $x\in A$, $y \in B$ and a path $\gamma$ connecting $(x,s)$ with $(y,t)$ such that $\gamma( r)\in C$ for all $r$, $s\leq r\leq t$. We denote this situation by $A\times \{s\}\rightarrow B\times \{t\}$ \emph{inside} $C$.

 Given a Harris construction $\mathcal{H}$ and  a subset $A$   of  $\mathbb{Z}$, we define the classic contact process beginning at time $s$  with initial configuration $A$ as follows
\begin{equation*}
\xi^{A}_{s,t}=\{x: \text{ exists } y \in A \text{ such that }(y,s)\rightarrow(x,t)\}.
\end{equation*}
In the special case of $s=0$, we just write $\xi^{A}_t$.  Also,  we denote by $\xi^{x}_{t}$ the process with initial configuration $\{x\}$. Furthermore,  we define the time of extinction of $ \xi^{A}_t$  as follows
\begin{equation*}
T^{A}=\inf\{t>0: \xi^{A}_t=\emptyset\}.
\end{equation*}
 By the graphic construction, we have the Markov property for the classical contact process. 
 
 For a time $t$ and  a set $A$,  we define the dual contact process  at time $s\in[0,t]$, with initial configuration $A$, by 
\begin{equation*}
\tilde{\xi}^{A,t}(s)=\{x: \text{ there exists }y \in A \text{ such that }(x,t-s)\rightarrow(y,t)\}.
\end{equation*}
We observe that the process $\{\tilde{\xi}^{A,t}(s)\}_{0\leq s\leq t}$ has the same law as the classic contact process  at time $t$ with initial configuration $A$. 
 
As we mentioned in the introduction, the classic contact process presents a phase transition in the rate of infection $\lambda$: there exists a critical parameter $\lambda_c=\lambda_c(R)$  defined  as follows
$$
\lambda_{c}=\inf\{\lambda:\pr(T^{0}(\lambda)=\infty)>0\}.
$$
For all $\lambda>\lambda_{c}$ all invariant measures of the process are a convex combination of  $\delta_{\emptyset}$ and a non trivial measure $\mu=\mu(\lambda)$. During all our work we are considering $\lambda>\lambda_c$. 

For the contact process with initial configuration $(-\infty,0]$ we define the rightmost occupied site at time $t$ as 
$$
r^{(-\infty,0]}_t=\max\{x:\xi^{(-\infty,0]}_t(x)=1\}.
$$
It is well known that there exists a positive number $\alpha$ such that
\begin{equation}\label{defalpha}
\lim\limits_{t\rightarrow \infty}\frac{r^{(-\infty,0]}_t}{t}=\alpha\text{ almost surely.}
\end{equation}
The following result is a simple lemma, which will be used in the next sections.
%\begin{lemma}\label{lemaaux0}
% For every $\delta>0$ we have that
%\begin{equation}\label{gdsupr_t}
%\pr\left(\underset{0\leq s\leq t}{\sup}r^{(-\infty,0]}_s>(\alpha+\delta)t\right)\leq Ce^{-ct}.
%\end{equation}
%where $\alpha$ is as in \eqref{defalpha}.
%\end{lemma}
\begin{lemma} Let $C$ be a subset of $\mathbb{Z}$, then we have that 
\begin{equation}\label{sextaequacaoahi}
\pr(\nexists\, s: |\xi^{C}_s|\geq N;T^{C}=\infty)=0\quad   \forall N\in \mathbb{N}.
\end{equation}
\end{lemma}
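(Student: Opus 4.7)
The plan is to show that survival forces the population size to eventually exceed any bound $N$, by establishing a uniform positive probability of extinction within one unit of time whenever the current configuration has fewer than $N$ particles, and then iterating.

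First I would prove a quick-extinction estimate: there exists $p_N>0$, depending only on $N$, $R$, and $\lambda$, such that for every $\zeta\subseteq\mathbb{Z}$ with $1\leq|\zeta|\leq N-1$,
\[
\pr\bigl(\xi^{\zeta}_{1}=\emptyset\bigr)\geq p_N.
\]
For each $x\in\zeta$, let $E_x$ be the event that during $(0,1]$ the first Poisson mark among $P^{x}\cup\bigcup_{0<\|x-y\|\leq R}P^{x\rightarrow y}$ belongs to $P^{x}$; that is, the recovery clock at $x$ rings before any outgoing birth arrow from $x$ is used. On $\bigcap_{x\in\zeta}E_x$, every initial particle recovers before reproducing, and induction on the ordered recovery times shows that no new site is ever occupied during $[0,1]$, so $\xi^{\zeta}_1=\emptyset$. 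The events $E_x$ involve disjoint families of Poisson processes and are therefore independent, each of probability at least $c(R,\lambda):=\tfrac{1-e^{-(1+2R\lambda)}}{1+2R\lambda}>0$, so $\pr\bigl(\bigcap_{x\in\zeta}E_x\bigr)\geq c^{\,N-1}=:p_N$.

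Next I would iterate in unit time steps. Set $A_N^k=\{|\xi^{C}_s|<N\text{ for all }s\in[0,k]\}\in\mathcal{F}_k$ and $D_k=\{\xi^{C}_{k+1}=\emptyset\}$, and note the inclusion
\[
\{\nexists\,s:|\xi^{C}_s|\geq N\}\cap\{T^{C}=\infty\}\ \subseteq\ A_N^k\cap\bigcap_{j=0}^{k}D_j^{c}\qquad\text{for every }k\in\mathbb{N}.
\]
By the Markov property applied at time $k$ together with Step 1, $\pr(D_k\mid\mathcal{F}_k)\geq p_N$ on $A_N^k$ (if $\xi^{C}_k=\emptyset$ then $D_k$ holds trivially). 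Using $A_N^k\subseteq A_N^{k-1}$ and conditioning successively at times $k,k-1,\dots,0$ yields
\[
\pr\!\left(A_N^k\cap\bigcap_{j=0}^{k}D_j^{c}\right)\leq(1-p_N)\,\pr\!\left(A_N^{k-1}\cap\bigcap_{j=0}^{k-1}D_j^{c}\right)\leq(1-p_N)^{k+1},
\]
which tends to $0$ as $k\to\infty$, giving \eqref{sextaequacaoahi}.

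The only delicate point is the uniformity of $p_N$ in the spatial configuration $\zeta$; this is handled by the factorisation of the extinction event $\bigcap_{x\in\zeta}E_x$ over independent site clocks, so the lower bound depends only on $|\zeta|$ and not on the positions of the particles. Everything else is routine iterated conditioning relying on the standard Markov property furnished by the Harris construction.
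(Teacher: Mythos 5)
Your proposal is correct and follows essentially the same strategy as the paper: establish a one-step extinction probability bounded below by a constant depending only on $N$ (uniform over configurations with fewer than $N$ particles), then iterate via the Markov property at integer times to get geometric decay of the probability of surviving while staying below $N$. The only difference is the choice of forced-extinction event (your per-site ``recovery before first outgoing arrow'' versus the paper's ``all deaths occur and no outgoing arrows at all before time $1$'', which yields the constant $(1-e^{-1})^N e^{-2RN\lambda}$), and both give valid uniform lower bounds.
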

\begin{proof}
Let $N$ be a positive integer. To obtain \eqref{sextaequacaoahi},  we first observe that
\begin{small}
\begin{align}
\begin{split}\label{setimaequacaoahi}
     &\pr(\forall\, 1\leq k\leq n\,|\xi^{C}_k|\leq N;\xi^{C}_n\neq \emptyset)=\int\limits_{|\xi|\leq N;\xi\neq \emptyset}\pr(|\xi^{C}_k|\leq N;\,1\leq k\leq n;\xi^{C}_n\neq \emptyset|\xi^{C}_{n-1}=\xi)d\mu_{n-1}(\xi)\\
    &=\int\limits_{|\xi|\leq N;\xi\neq \emptyset}\pr(|\xi^{\xi}_{1}|\leq N;\xi^{\xi}_1\neq \emptyset)\pr(|\xi^{C}_k|\leq N;\,1\leq k\leq n-2|\xi^{C}_{n-1}=\xi)d\mu_{n-1}(\xi)\\
    &\leq\int\limits_{|\xi|\leq N;\xi\neq \emptyset}\pr(\xi^{\xi}_1\neq \emptyset)\pr(|\xi^{C}_k|\leq N;\,1\leq k\leq n-2|\xi^{C}_{n-1}=\xi)d\mu_{n-1}(\xi),
\end{split}
\end{align}
\end{small}where $\mu_{n-1}$ is the distribution of $\xi^{C}_{n-1}$. We observe that  in the second equality of  \eqref{setimaequacaoahi}, we have used the Markov property of the contact process. Moreover, we have that  $(1-e^{-1})^Ne^{-2RN\lambda}$ is the probability that before time $1$,  there are no marks for $2RN$ independent Poisson processes  of rate $\lambda$, and there is a mark before time $1$ for $N$ independent Poisson processes of rate $1$. Therefore, in the last term of \eqref{setimaequacaoahi}, we have that  the  first probability within the integral is less than $1-(1-e^{-1})^Ne^{-2RN\lambda}$. Then, we conclude that for any $n\geq 0$ 
\begin{align}\label{oitavaequacaoahi}
\begin{split}
    &\pr(\forall\, 1\leq k\leq n\,|\xi^{C}_k|\leq N;\xi^{C}_n\neq \emptyset)\\
&\qquad\leq \left(1-(1-e^{-1})^Ne^{-2RN\lambda}\right)\pr(|\xi^{C}_k|\leq N;\,1\leq k\leq n-1;\xi^{C}_{n-1}\neq \emptyset).
\end{split}
\end{align}
   Using \eqref{oitavaequacaoahi} recursively in $n$, we obtain 
   $$
  \pr(\forall\, 1\leq k\leq n\,|\xi^{C}_k|\leq N;\xi^{C}_n\neq \emptyset)\leq\left(1-(1-e^{-1})^Ne^{-2RN\lambda}\right)^{n},
   $$
  for all $n$ and \eqref{sextaequacaoahi} follows.
\end{proof}
\noindent \textbf{The two-type contact process.} We now define the two-type contact process using the  Harris construction. The advantage of this definition is that it provides a coupling between the classic contact process and the  two-type contact process.
    
 First, we define the two-type contact process restricted to the interval $[-N+1,N]$. 
 Let  $A$ and $B$ be two disjoint subsets of $[-N+1,N]$, we denote by $\{\zeta^{A,B,N}_t\}_t$ the  two-type contact process restricted to $[-N+1,N]$ with initial configuration $\mathds{1}_{A}+2\mathds{1}_{B}$. In this case, it is simple to define this process in terms of  a Harris construction, since we are dealing with a stochastic process that has c\`adl\`ag trajectories with jumps only in the times of the Poisson processes $\{P^{x}\}_{x\in [-N+1,N ]}$ or $\{P^{y\rightarrow x}\}_{\{y,x \in [-N+1,N ]: \hspace{0.2cm} 0<||x-y||\leq R\}}$. Let  $t$ be one of those times, two scenarios are possible:\begin{itemize}
    \item[(1)]  $t\in P^{x} $ for some $x$. In this case, $x$ is empty at this time and we set $\zeta^{A,B,N}_t(x)=0$;
    \item[(2)] $t\in P^{y\rightarrow x}$ for some $x$ and $y$.  If $x$ is occupied by a particle of type $i$ ($i=1,2$), and $x$ is in the region of priority of this type of particles, then nothing changes at $x$. Otherwise, $x$ becomes occupied by the type of particle that is in $y$ and we set $\zeta^{A,B,N}_t(x)=\zeta^{A,B,N}_t(y)$.
\end{itemize}

Now, let $A$ and $B$ be two disjoint subsets of $\mathbb{Z}$ and $q$ a positive rational number. In the set $\Omega_q=\underset{x\in \mathbb{Z}}{\cap}\{|\tilde{\xi}^{\{x\},q}(q)|<\infty\}$, which has probability one, we define the  two-type contact process with  initial configuration $\mathds{1}_{A}+2\mathds{1}_{B}$ at time $q$ as
$$
\zeta^{A,B}_q(x)=\lim\limits_{N\rightarrow\infty}\zeta^{A_N,B_N,N}_q(x)
$$
for every $x\in \mathbb{Z}$, where $A_N=A\cap[-N+1,N]$ and $B_N=B\cap[-N+1,N]$. Moreover, in the set $\tilde{\Omega}=\underset{q\in \mathbb{Q}^+}{\cap}\Omega_q$, which also have total probability, we define  the two-type contact process with initial configuration $\mathds{1}_{A}+2\mathds{1}_{B}$ at time $t$ as
$$
\zeta^{A,B}_t=\lim\limits_{q\downarrow t}\zeta^{A,B}_q,
$$
for every $t\geq 0$. In this way, we have defined a stochastic process with c\`adl\`ag trajectories and with flip rates as described in the introduction.
We also observe that, as for the classic contact process, the Markov property holds for the  two-type contact process.
  
%\textcolor{red}{esto hay que ponerlo mejor aquí}
For a configuration $\zeta\in\{0,1,2\}^{\mathbb{Z}}$  we define the rightmost site occupied by a type $1$ particle as %such that, for some $M\in \mathbb{Z}^+$,  $\zeta(x)\neq 1$ for all $ x\in [M,\infty)$ and $\zeta(x)\neq 2$  for all $ x\in (-\infty,-M]$,
\begin{equation*}
\textbf{r}^1(\zeta)=\sup\{x: \zeta(x)=1\},
\end{equation*}
 and the leftmost site occupied by a type $2$ particle as
\begin{equation*}
\textbf{l}^2(\zeta)=\inf\{x:\zeta(x)=2\}
\end{equation*}
with the convention that $\sup\{\emptyset\}=-\infty$ and $\inf\{\emptyset\}=\infty$.

Now, we are ready to state the two main results of our paper.

\begin{theorem}\label{teoconvergence} There exists an invariant measure $\nu$ for the two-type contact process such that 
\begin{equation*}
\zeta^{\textbf{1},\textbf{2}}_t\underset{t\rightarrow\infty}{\longrightarrow}\nu \text{ in Distribution.}
\end{equation*}
\end{theorem}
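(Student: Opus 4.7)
The plan is to first extract a subsequential weak limit $\nu$, verify that it is invariant and supported on configurations with infinitely many particles of each type, and then upgrade the subsequential convergence to full convergence in distribution.

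\textbf{Step 1 (tightness and invariance).} The configuration space $\{0,1,2\}^{\Z}$ is compact in the product topology, so the family $\{\mathcal{L}(\zeta_{t}^{\textbf{1},\textbf{2}})\}_{t\geq 0}$ is tight. Along some sequence $t_{n}\to\infty$, it converges weakly to a probability measure $\nu$. The Feller property of the two-type contact process, which follows from the graphical construction, then implies that $\nu$ is invariant.

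\textbf{Step 2 (support of $\nu$).} I verify that $\nu$ charges only configurations with infinitely many particles of each type. The first observation is that the occupation projection $\xi_{t}:=\{x:\zeta_{t}^{\textbf{1},\textbf{2}}(x)\neq 0\}$ is a classic contact process started from $\Z$: the rates $c(x,\zeta,i\to 0)$ and $\sum_{i=1,2}c(x,\zeta,0\to i)$ coincide with those of the classic contact process, and the priority conversions do not affect the occupation indicator. Hence $\xi_{t}\Rightarrow\mu$, so occupied sites have positive density uniformly in space and time. Second, I argue that the set of type $1$ particles inside $(-\infty,0]$ stochastically dominates a classic contact process on $(-\infty,0]$ started from $\mathds{1}_{(-\infty,0]}$: indeed, for any site $x\in(-\infty,0]$, the rate at which $x$ becomes type $1$ equals $\lambda\cdot\#\{y:\zeta(y)=1,\,0<\|x-y\|\leq R\}$ regardless of whether $x$ is currently empty or occupied by type $2$, while the rate at which it ceases to be type $1$ is always $1$. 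Retaining only births from type $1$ particles already in $(-\infty,0]$ gives the lower bound. Supercriticality of this restricted process yields a positive lower density of type $1$ particles in $(-\infty,-M]$ uniformly in $t$ and $M$, so $\nu$ assigns full mass to configurations with infinitely many type $1$ particles in $(-\infty,0]$. A symmetric argument handles type $2$ in $[1,\infty)$. In particular $\nu\notin\{\delta_{\emptyset},\mu_{1},\mu_{2}\}$.

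\textbf{Step 3 (convergence along the full sequence).} To upgrade subsequential to full convergence, it suffices to show that for every cylinder event $A$ depending on finitely many coordinates in a window $W\subset\Z$, the map $t\mapsto\pr(\zeta_{t}^{\textbf{1},\textbf{2}}\in A)$ has a limit. I use the Mountford-Sweet renormalization from Section \ref{Moreresultskdenpendent} to establish a localization estimate: for every $\eps>0$ there is $M$ such that for all sufficiently large $t$, the total variation distance between $\mathcal{L}(\zeta_{t}^{\textbf{1},\textbf{2}}|_{(-\infty,-M]})$ and $\mu_{1}|_{(-\infty,-M]}$ is at most $\eps$, and analogously for $\mu_{2}$ on $[M+1,\infty)$. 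Granted this, I compare $\zeta_{t}^{\textbf{1},\textbf{2}}|_{W}$ and $\zeta_{t'}^{\textbf{1},\textbf{2}}|_{W}$ for large $t,t'$ by a coupling that agrees on the complement of $[-M,M]$ with high probability and then proceeds via common Harris marks in the finite region containing $W$; this delivers the Cauchy property of the marginals on $W$ and hence the desired convergence.

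\textbf{Main obstacle.} The delicate step is the localization estimate in Step 3: one must show that the influence of the interface between type $1$ and type $2$ decays away from the origin uniformly in $t$. This is not implied by the convergence of the classic contact process alone, since one has to track which type occupies each site, and type $2$ particles can invade $(-\infty,0]$ by being born at empty sites. Controlling this invasion is the central technical challenge and is precisely where the block construction and oriented percolation tools of Mountford-Sweet are expected to enter, by confining the interface to a bounded region in renormalized coordinates.
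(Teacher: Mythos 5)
There is a genuine gap, concentrated in your Step 3, which is where the entire difficulty of the theorem lives. Your ``localization estimate'' is asserted, not proved, and as literally stated it is almost certainly false: you ask that the \emph{total variation} distance between $\mathcal{L}\bigl(\zeta^{\textbf{1},\textbf{2}}_t|_{(-\infty,-M]}\bigr)$ and $\mu_1|_{(-\infty,-M]}$ be small, but these are measures on an infinite product space; both are mixing under translations, so for any fixed $t$ at which the local pattern densities differ even slightly from those of $\mu_1$ they are mutually singular on the half-line and the total variation distance equals $1$. Even granting a corrected (local) version, your subsequent coupling sketch does not close the argument: agreement of the laws far from the origin says nothing about the window $W$ near the interface, and running ``common Harris marks in the finite region containing $W$'' for a fixed duration $s$ makes $\zeta_t|_W$ depend on the uncontrolled configuration in a region of width growing with $s$ at time $t-s$. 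You correctly identify this as the main obstacle, but identifying it is not resolving it. The paper's resolution is Proposition \ref{lallave}: using expanding points (Corollary \ref{CorollaryAMPV}), the renormalized percolation paths of Lemmas \ref{otroauxiliar}--\ref{Lemak-pendentpercolation2}, and a regeneration rectangle $[-M',M']\times[\textbf{X}_{k_0}-\hat{K}\hat{N},\textbf{X}_{k_0}]$ with no crossing paths, one encloses a space-time region $I_C$ on whose boundary the two processes $\zeta^{A,B}$ and $\zeta^{\textbf{1},\textbf{2}}$ provably carry the same types, whence they agree inside $I_C$ and in particular on $E\times[\textbf{t},\infty)$. Theorem \ref{teoconvergence} then follows with no Cauchy argument at all: writing $\pr(\zeta^{\textbf{1},\textbf{2}}_t\in F)=\int_{\mathcal{A}}\pr(\zeta^{\textbf{1},\textbf{2}}_t\in F)\,d\nu(\zeta_0)$, replacing $\zeta^{\textbf{1},\textbf{2}}_t$ by $\zeta^{\zeta_0}_t$ via the a.s.\ eventual agreement, and using invariance of $\nu$ to evaluate $\int\pr(\zeta^{\zeta_0}_t\in F)\,d\nu(\zeta_0)=\nu(F)$ exactly.

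Two smaller points. In Step 1, a subsequential weak limit of the laws $\nu_{t_n}$ need not be invariant; the Krylov--Bogolyubov argument requires Ces\`aro averages $\frac{1}{T}\int_0^T\nu_t\,dt$, which is what the paper uses (via Proposition 1.8 of \cite{Liggett}). In Step 2, your stochastic domination of the type-$1$ particles in $(-\infty,0]$ by a half-line contact process started from $\mathds{1}_{(-\infty,0]}$ is correct (type $1$ cannot be displaced by type $2$ in its priority zone) and gives infinitely many particles of each type under $\nu$ by a route somewhat more direct than the paper's; but note the paper's support set $\mathcal{A}$ also records that $\textbf{r}^1$ and $\textbf{l}^2$ are finite, which requires the tightness estimate of Proposition \ref{tightness} and is needed downstream.
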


\begin{theorem}\label{Teo2}
Let $A$ and $B$ be two disjoint subsets of $\mathbb{Z}$. The process $\{\zeta^{A,B}_t\}$ converges to a convex combination of the measures $\delta_{\emptyset}$, $\mu_1$, $\mu_2$ and $\nu$. Consequently, the set of stationary and  extremal distributions for the two-type contact process  is  $\{\delta_{\emptyset},\mu_1, \mu_2,\nu\}$.
\end{theorem}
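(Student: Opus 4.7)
The plan is to decompose the probability space by the survival events of each type and to handle the four resulting cases. Define $E_i=\{\zeta^{A,B}_t\text{ contains at least one type }i\text{ particle for all }t\geq 0\}$ for $i=1,2$. Since type $i$ particles can only be born from existing type $i$ neighbors, on $E_i^c$ the type $i$ population reaches $\emptyset$ at some finite time and stays empty thereafter. The four events $E_1^c\cap E_2^c$, $E_1\cap E_2^c$, $E_1^c\cap E_2$, $E_1\cap E_2$ partition $\Omega$, and the target limiting distribution is
\[
\pr(E_1^c\cap E_2^c)\,\delta_{\emptyset}+\pr(E_1\cap E_2^c)\,\mu_1+\pr(E_1^c\cap E_2)\,\mu_2+\pr(E_1\cap E_2)\,\nu,
\]
from which the characterization of the extremal stationary measures will follow immediately.

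Three of the four cases reduce to classical facts. On $E_1^c\cap E_2^c$ both populations are extinct after finite time, so $\zeta^{A,B}_t\to\emptyset$ almost surely. On $E_1\cap E_2^c$, let $T_2=\inf\{t:\zeta^{A,B}_t\text{ has no type 2 particle}\}$, which is finite on the event. For $t\geq T_2$ the process evolves exactly as a classic supercritical contact process on the type 1 component (no type 2 is present to trigger priority invasions), starting from a configuration whose classic contact process survives because $E_1$ holds. Applying the strong Markov property at $T_2$ together with the standard convergence of the supercritical classic contact process (conditional on survival) to $\mu$, one obtains $\zeta^{A,B}_t\to\mu_1$ on $E_1\cap E_2^c$. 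The case $E_1^c\cap E_2$ is symmetric.

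The hard case is $E_1\cap E_2$, on which one must show $\zeta^{A,B}_t\to\nu$. My approach is to couple $\zeta^{A,B}$ with the reference process $\zeta^{\textbf{1},\textbf{2}}$ of Theorem~\ref{teoconvergence} through the shared Harris construction, and to prove that for every finite window $W=[-N,N]$ and every $\eps>0$, $\pr(\zeta^{A,B}_t|_W\neq \zeta^{\textbf{1},\textbf{2}}_t|_W,\,E_1\cap E_2)<\eps$ for all $t$ sufficiently large. The main step is a buffer/density property: on $E_1\cap E_2$, for every large $M$ and every $\eps>0$, with probability tending to $1$ the type 1 particles of $\zeta^{A,B}_t$ occupy some interval $[-M-K,-M]$ at density at least $1-\eps$, and symmetrically the type 2 particles occupy $[M,M+K]$. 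Once such buffers separate $W$ from the far field where the initial conditions still differ, an ancestry argument using the Harris construction forces the configurations on $W$ to coincide with high probability, and Theorem~\ref{teoconvergence} then identifies the common limit as $\nu$.

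The main obstacle is establishing the buffer/density property on $E_1\cap E_2$. A useful reduction is the identity $\{x:\zeta^{A,B}_t(x)\neq 0\}=\xi^{A\cup B}_t$, which holds via the shared Harris construction because births act only on empty sites and deaths ignore types; this transfers the total occupation density to the classic contact process $\xi^{A\cup B}$, whose asymptotic density is controlled by the Mountford-Sweet tools recalled in Section~\ref{Moreresultskdenpendent}. It then remains to argue that type 1 dominates type 2 deep in $(-\infty,0]$: each type 2 particle at a site $x\in(-\infty,0]$ having a type 1 neighbor flips to type 1 at rate $\lambda$, so on $E_1$ the type 1 population eventually sweeps out the type 2 particles arbitrarily deep into $(-\infty,0]$, and symmetrically for type 2 in $[1,\infty)$. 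Making this sweeping effect quantitative and matching the resulting local densities with those under $\nu$ is the technical core of the argument.
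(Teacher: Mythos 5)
There is a genuine gap in your case decomposition. You assert that on $E_1\cap E_2$ (both types survive) the process converges to $\nu$, but that is not what happens, and the paper's proof is organized precisely around this point. The event $\{\tau^{A,B}=\infty\}$ must be split further according to whether each type occupies \emph{its own priority half-line} infinitely often. On the sub-event where both types survive but type $1$ eventually vacates $(-\infty,0]$ for good, the process behaves after a finite time like the Grass-Bushes-Trees process in which type $2$ has priority everywhere; by the interface-tightness result of \cite{Conos} the limit is then $\mu_2$, not $\nu$ (this is the content of \eqref{Formula2} and Lemma \ref{lemmadummy}), and symmetrically the limit is $\mu_1$ when type $2$ eventually vacates $[1,\infty)$. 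Only on the event $D(A,B)$ of \eqref{losD} --- type $1$ present in $(-\infty,0]$ infinitely often \emph{and} type $2$ present in $[1,\infty)$ infinitely often --- does Proposition \ref{lallave} give eventual agreement with $\zeta^{\textbf{1},\textbf{2}}$ on finite windows and hence convergence to $\nu$. Your decomposition silently identifies $E_1\cap E_2$ with $D(A,B)$.

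The step where this surfaces in your sketch is the claim that ``on $E_1$ the type $1$ population eventually sweeps out the type $2$ particles arbitrarily deep into $(-\infty,0]$.'' The event $E_1$ only guarantees that type $1$ survives \emph{somewhere}; it may survive entirely inside $[1,\infty)$, where it is the inferior species, cannot displace type $2$, and need never (or only finitely often) reach $(-\infty,0]$. So the sweeping claim is unjustified as stated, and without it your buffer/density argument has no starting point. To repair the proof you would either have to show that $E_1\cap E_2\setminus D(A,B)$ is null --- which the paper does not claim and which is not expected for general $A,B$ --- or add the two extra cases and treat them via the GBT comparison, which is exactly what Lemma \ref{lemmadummy} does. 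The remaining cases ($E_1^c\cap E_2^c$, $E_1\cap E_2^c$, $E_1^c\cap E_2$) are handled essentially as in the paper's Lemma \ref{lemmaequacaoahi}, and your observation $\{x:\zeta^{A,B}_t(x)\neq 0\}=\xi^{A\cup B}_t$ is correct and used there too; but note that even on $D(A,B)$ your window-agreement step is only a sketch of what Proposition \ref{lallave} establishes with expanding points and the Mountford-Sweet renormalization.
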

\section{$k$-dependent percolation systems with small closure and the Mountford-Sweet renormalization}\label{Moreresultskdenpendent}

In this section, we first introduce some notations and results for oriented percolation. After these notions, we recall the definition of the Mountford-Sweet renormalization for the contact process with $R>1$. 

Consider $\Lambda=\{(m,n)\in \mathbb{Z}\times \mathbb{Z}^{+}: m+n \text{ is even}\}$, $X=\{0,1\}^{\Lambda}$ and $\mathcal{X}$ the $\sigma$-algebra generated by the cylinder sets of $X$. % Denote elements of $\Omega$ by $\Psi$. 
  Given $\Psi \in X$, we say that two points $(m,k)$, $(m',k')\in\Lambda$ with $k<k'$ are \emph{connected by an open path} (\emph{according to} $\Psi$) \cite{Conos}, if there is a sequence $\{(m_i, n_i)\}_{0\leq i\leq k'-k}$  such that 
 $$(m_0,n_0)=(m, k),\quad (m_{k'-k},n_{k'-k})=(m', k'),\quad ||m_{i+1}-m_{i}||=1,\quad  n_i=k+i,$$ with $0\leq i\leq k'-k-1$ and $\Psi(m_i, n_i)=1$ for all $i$. 
 If $(m,k)$ and $(m',k')$ are connected by an open path (according to $\Psi$), we write $(m,k)\rightsquigarrow (m',k')$ (according to $\Psi$).

Now, let $A$ and $B$ be subsets of $\mathbb{Z}$ and $C$ be a subset of $\Lambda$. We say that $A \times \{n \}$ is connected with $B\times\{n' \} $  inside $C$, if there are $m\in A$ and $m' \in B$ such that $(m,n)$, $(m',n')$ are in $\Lambda$, $(m,n)\rightsquigarrow (m',n')$ and all the edges of the path are in $C$. In this case, we write $A\times\{n\}\rightsquigarrow B\times\{n'\} \text{ inside }C$.
Let $(m,n)$ be a point in $\Lambda$, we define the cluster beginning at $(m,n)$ as 
\begin{equation*}
C_{(m,n)}=\{(m',n'):\text{ such that }n'\geq n \text{ and }(m,n)\rightsquigarrow(m',n')\}.
\end{equation*}
 
 Let $A$ be a subset of $\mathbb{Z}$ such that $\sup A <\infty$, we define the rightmost site connected with $A\times\{0\}$  at time $n$  as follows
\begin{equation*}
\hat{r}^{A}_n=\max\{k:\exists \,(k',0)\in A\times\{0\},\,\text{ such that }(k',0)\rightsquigarrow(k,n)\}.
\end{equation*}
Let $B$ be a subset of $\mathbb{Z}$ such that $\inf B>-\infty$, we define the leftmost site connected with $B\times\{0\}$ at time $n$ as
\begin{equation*}
\hat{l}^{B}_n=\min\{k:\exists \,(k',0)\in B\times\{0\},\,\text{ such that }(k',0)\rightsquigarrow(k,n)\}.
\end{equation*}

Let $\hat{\mathbb{P}}_{p}=\prod_{\Lambda} (p\delta_1+(1-p)\delta_0)$ be the Bernoulli product measure on $\Lambda$.  In \cite{Survey}, it was proved via the dual-contours methods that
\begin{equation}\label{limcluster}
\underset{p \rightarrow 1}{\lim}\, \hat{\pr}_{p}(|C_{(0,0)}|=\infty)=1,
\end{equation}
and for every $\beta \in (0,1)$
\begin{equation}\label{percolation slope}
\underset{p \rightarrow 1}{\lim}\,\hat{\pr}_{p}(\exists\hspace{0.2 cm} n \geq 1: \hat{r}^{(-\infty,0]}_n < \beta n)=0.
\end{equation}
Given $k\geq 1$ and $\delta>0$,  $(X,\mathcal{X}, \hat{\mathbb{P}})$ is a \emph{$k$-dependent oriented percolation system with closure below $\delta$}, if  for all $r$ positive
\begin{small}
$$
\hat{\mathbb{P}}( \Psi(m_i,n)=0, \forall i\hspace{ 0.2 cm} 0\leq i\leq r|\{\Psi(m,s): (m,s)\in \Lambda, 0\leq s<n\} )<\delta^r,
$$
\end{small}with $(m_i,n)\in \Lambda$ and $||m_i-m_j||>2k$ for all $i \neq j$ and $1 \leq i,j \leq r$ (see \cite{MountfordSweet}, \cite{Conos}).
%%%%%%%%%%%%%%%%%%%%%%%Tengo la impresión de que no usas esto en ningun momento y lo puedes suprimir
% Let $D$ be a subset of $[1,m]\times \{0\} \cap \Lambda$, and define
%$$\Psi^{m,D}_n=\{x: D\rightsquigarrow (x,n) \text{ inside }[1,m]\times \{0\} \cap \Lambda\}$$ 
%and
%%%%%%%%%%%%%%%Revisa si usas el tiempo rescalado si no lo usas lo puedes quitar y tienes menos cosas y menos un T
%	\begin{equation}
%%	\end{equation}
%We suppress the superscript $D$ if $D=[1,m]\times \{0\} \cap \Lambda$, and change the superscript $(x,0)$ for $x$.

Let $\Psi$ and $\Psi'$ be two elements of $X$, we say that  $\Psi\leq \Psi'$  if $\Psi(m,n)\leq\Psi'(m,n)$ for all $(m,n)\in \Lambda$. Also,  we say that a subset  $A$ of $X$ is increasing if $\Psi \in A$ and $\Psi\leq\Psi'$,  then $\Psi' \in A$.  Let $\hat{\mathbb{P}}_1$ and $\hat{\mathbb{P}}_2$ be two probability measures on $\mathcal{X}$, we say that $\hat{\mathbb{P}}_1$ stochastically dominates $\hat{\mathbb{P}}_2$ if $\hat{\mathbb{P}}_1(A)\geq\hat{\mathbb{P}}_2(A)$ for all $A$ increasing in $\mathcal{X}$.

%  The next result follows via the dual-contours methods of Durrett; for details see \cite{Survey}.  
%  \textcolor{red}{este lema para que lo usas?}
%\begin{lemma}\label{surveyD} Let $\hat{\mathbb{P}}_{p}=\prod_{\Lambda} (p\delta_1+(1-p)\delta_0)$ be the Bernoulli product measure on $\Lambda$. There exist $\epsilon_{1}$ and $p_0$ such that for all $p>p_0$:
%	  $$
%	        \underset{\begin{tiny}\begin{array}{c}x\in[1,M], \hspace{0.1cm}x \text{ even};\\
%	         y \in [1,M],\hspace{0.1cm} y+M^{2} \text{ even}  
%	    \end{array}\end{tiny}}{\inf} \hat{\mathbb{P}}_{p}((x,0)\rightsquigarrow(y,M^{2}) \text{ inside } [1,M] )\geq \epsilon_1,
%	    $$
%for every $M$ large enough. 
%	\end{lemma}
The following lemma is a consequence of Theorem $0.0$ in \cite{ LiggetSchonmannStacey}.
\begin{lemma}\label{LiggettSmallClosure}
For  $k\in \mathbb{N}$  and  $0<p<1$ fixed, there exists $\delta>0$ such that if $(X,\mathcal{X},\hat{\pr})$ is a  k-dependent oriented percolation system with closure below   $\delta$,  then    $\hat{\mathbb{P}}$  stochastically dominates $\hat{\mathbb{P}}_{p}$.
\end{lemma}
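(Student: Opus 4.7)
The plan is to deduce the lemma directly from Theorem 0.0 of \cite{LiggetSchonmannStacey}. That theorem states that for each dependence range $k$ and each target density $p\in(0,1)$, there is a threshold $q^{\ast}=q^{\ast}(k,p)<1$ such that any $k$-dependent $\{0,1\}$-valued random field whose marginals all exceed $q^{\ast}$ stochastically dominates the independent Bernoulli product measure of density $p$. Given the target $p$ in our statement, I would fix such a $q^{\ast}$ and choose $\delta$ so small that $1-\delta\geq q^{\ast}$.

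The first step is to apply the closure condition with $r=1$: taking unconditional expectation of the conditional bound yields $\hat{\pr}(\Psi(m,n)=0)<\delta$ for every $(m,n)\in\Lambda$, which is exactly the marginal lower bound required by LSS.

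The second step is to verify the $k$-dependence in a form usable by the LSS domination result. The cleanest route is to construct a direct sequential coupling rather than invoke LSS verbatim: build $\Psi$ together with an i.i.d.\ Bernoulli$(p)$ field $\{Y(m,n)\}$ level by level in $n$. At each level, partition the relevant sites of $\Lambda$ into equivalence classes of points pairwise more than $2k$ apart, and within each class use the closure bound of order $r$ equal to the class size (conditioning on the full past up to time $n-1$ together with the $Y$'s already drawn) to couple $\Psi(m,n)$ with $Y(m,n)$ so that $\Psi(m,n)\geq Y(m,n)$. Enlarging the filtration in this way only preserves the closure inequality, so the inductive step goes through and produces the desired stochastic domination.

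The main obstacle is the second step: reconciling the paper's ``closure below $\delta$'' condition with the $k$-dependence hypothesis in LSS. The definition given in the excerpt controls conditional probabilities of joint zeros only at a single time level and only relative to the strictly earlier past, not to coupling choices made inside the same slice. The substantive bookkeeping is therefore to check that enlarging the conditioning by the already-coupled $Y(m,n)$ within the current level does not destroy the $\delta^r$ bound, and that the freshly drawn $Y$'s genuinely form an i.i.d.\ family. Once that bookkeeping is in place, choosing $\delta$ as above closes the argument.
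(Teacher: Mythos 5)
The paper gives no argument for this lemma at all: it is stated as a direct consequence of Theorem $0.0$ of Liggett--Schonmann--Stacey, so your opening paragraph (fix $q^{\ast}=q^{\ast}(k,p)$ and choose $\delta$ with $1-\delta\geq q^{\ast}$) together with the $r=1$ observation already matches everything the paper records. The intended mechanism behind that citation is to apply the LSS domination theorem to the conditional law of the level-$n$ variables given the past $\{\Psi(m,s):s<n\}$, uniformly in the conditioning, and then concatenate the levels by the tower property; the ``closure below $\delta$'' condition is formulated exactly so that this conditional application is legitimate.

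The genuine gap is in your second step, and it is not mere bookkeeping. You assert that ``enlarging the filtration in this way only preserves the closure inequality,'' i.e.\ that the bound on $\hat{\pr}(\Psi(m,n)=0\mid\text{past})$ survives additional conditioning on already-coupled variables within the same time slice. The definition gives no such thing: it controls \emph{joint} probabilities of zeros at well-separated sites of a single level, conditioned only on the strictly earlier levels. A joint bound $\hat{\pr}(A\cap B)\leq\delta^{2}$ is perfectly compatible with $\hat{\pr}(A\mid B)=1$ (take $\hat{\pr}(B)\leq\delta^{2}$), so conditioning on same-level information can destroy the single-site bound entirely, and your inductive coupling step does not go through as stated. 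Converting a $k$-dependence-type hypothesis into a site-by-site monotone coupling against an i.i.d.\ field is precisely the nontrivial content of the LSS theorem --- their proof is a recursive estimate, not a naive sequential coupling --- so your construction either silently re-proves LSS or is incomplete. The safe repair is not to open that black box: for each $n$, apply the LSS domination result (in the form that accepts conditional hypotheses) to the level-$n$ field conditionally on the past, and stitch the levels together afterwards; this is what the paper's one-line attribution is standing in for.
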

In the next two lemmas, we enunciate basic results for the Bernoulli product measure and some consequences of  these results for $k$-dependent percolation systems with small closure.  Items $(i)$ and $(ii)$ in Lemma \ref{Lemmak-dependentpercolation} below can be found in \cite{Conos}. Before the statements of the lemmas, we define the following sets
$$
A_{n}(i,k)=\{ \exists\hspace{0,1cm} m: m> n/2 \text{ and } (i+2,k) \rightsquigarrow(m,k+n) \},
$$

\begin{equation*}
\Gamma_{ n}(i,k)=  \left \lbrace \begin{array}{cc}
     \text{ there exists a path connecting } (i+2,k) \rightsquigarrow \mathbb{\Z}\times\{n+k\}\text{ such that}\\
     
     \text{  this path does not intersect  the set } \{(m,s)\in \Lambda: m\leq (s-k)/2+i+1\}   \end{array}\right\rbrace
\end{equation*}
and

$$
\Gamma(i,k)=\underset{n\in \mathbb{N}}{\bigcap}\Gamma_{n}(i,k),
$$
where $(i,k)\in \Lambda$. In the rest of the paper, when $(i,k)=(0,0)$,  we omit the index in the sets $A_{n}(i,k)$, $\Gamma_n(i,k)$ and $\Gamma(i,k)$.  
 Also, we define the set
 \begin{equation*}
 C_n(N)=\left\lbrace\begin{array}{l}\text{ there exists a path connecting }[1,\infty)\times\{0\}\\ \text{ with } [1,N]\times\{n\},
 \text{ inside }([1,\infty)\times[0,\infty))\cap \Lambda \end{array}\right\rbrace.
 \end{equation*}
\begin{lemma}\label{Lemmak-dependentpercolation}
For every  $\epsilon>0$, there exists $p_0>0$ such that 
\begin{itemize}
\item[(i)]  $\hat{\pr}_p(\cup_n  A^{c}_{n})<\epsilon$ for all $p\in[p_0,1]$;
\item[(ii)] $\hat{\pr}_p(\Gamma)>1-\epsilon$, for all $p\in[p_0,1]$;
\item[(iii)] for all $p\in[p_0,1]$, there exist positive constants $c$ and $C$ such that $$\hat{\pr}_p(\{C_n(N)\}^{c})\leq Ce^{-cN},$$ for all $n$ and $N$.
\end{itemize}
\end{lemma}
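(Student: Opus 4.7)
The plan is to establish all three items via Peierls-type contour estimates on the site percolation $(\Lambda,\hat{\pr}_p)$, choosing $p$ close enough to $1$ that closed sites are rare. Items (i) and (ii) are classical and appear in \cite{Conos}, so I would invoke them directly; for completeness, the outline is: by translation invariance of $\hat{\pr}_p$ one reduces to $(i,k)=(0,0)$, then observes that the complement events $\cup_n A_n^c$ and $\Gamma^c$ force the existence of a closed contour of $\Lambda$ whose length is at least linear in $n$ and which bounds the open cluster of $(2,0)$ from the relevant side. By the product structure of $\hat{\pr}_p$, each contour of length $L$ has probability at most $(1-p)^L$ and there are at most $\kappa^L$ of them (for a combinatorial constant $\kappa$). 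Summing in $L \geq c n$ and then in $n \geq 1$ yields a bound that tends to $0$ as $p \to 1$, giving (i) and (ii) with a common threshold.

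For item (iii), I would adapt the same contour philosophy to the half-strip $[1,\infty)\times[0,n]$. The event $C_n(N)^c$ asserts the non-existence of an open path in this strip from $[1,\infty)\times\{0\}$ to $[1,N]\times\{n\}$. By a topological separation argument, it forces the existence of a closed \emph{barrier}, namely a connected set of closed sites in $\Lambda$ which, together with the wall $\{m=0\}$, splits the strip into two pieces containing the two boundary segments respectively. The crucial geometric claim is that any such barrier has total length at least $c_0 N$ for an absolute constant $c_0>0$: a barrier of length $< c_0 N$ would either fail to extend horizontally past position $N$ from the wall, or fail to seal the strip vertically around the target segment $[1,N]\times\{n\}$, in either case leaving a way for a path to slip around. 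Given this linear lower bound, the Peierls counting gives
\begin{equation*}
\hat{\pr}_p(C_n(N)^c) \;\leq\; \sum_{L\geq c_0 N} \kappa^L (1-p)^L \;\leq\; C e^{-c N},
\end{equation*}
uniformly in $n$, provided $p$ is chosen so that $\kappa(1-p)<1/2$. Taking $p_0$ large enough to meet the requirements of all three items simultaneously concludes the argument.

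A natural alternative for (iii) is a block renormalisation: partition $\Lambda$ into blocks of fixed size $L\times L$, call a block \emph{good} if a rich connectivity pattern holds inside it, and note that by Lemma \ref{LiggettSmallClosure} the good blocks dominate a high-density Bernoulli percolation at the block scale; the corresponding disconnection at the block level then decays exponentially in $N/L=\Omega(N)$ via the standard half-plane estimate for Bernoulli product measure. Both routes are essentially equivalent, and the contour formulation is the more transparent of the two.

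The main obstacle is the combinatorial/topological step in (iii): rigorously verifying that every separating barrier inside the unbounded half-strip $[1,\infty)\times[0,n]$ has length of order $N$, independent of the vertical extent $n$. The subtlety lies in handling barriers that stay in a bounded horizontal range (which must be long vertically but can still be short if not properly controlled) versus those that extend far horizontally; a careful case analysis is required to rule out cheap barriers for every $n$. Once this geometric lemma is in place, the probabilistic Peierls counting is routine.
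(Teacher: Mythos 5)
Your overall strategy is sound and lands on the same foundation as the paper (contour estimates for Bernoulli oriented percolation), but for item (iii) you take a genuinely different and harder route. The paper does not attempt a separating-barrier argument inside the half-strip at all: it first uses the time-reflection symmetry of $\hat{\pr}_p$ to rewrite $\hat{\pr}_p(\{C_n(N)\}^c)$ as the probability that no open path joins $[1,N]\times\{0\}$ to $[1,\infty)\times\{n\}$ inside the half-plane, and then bounds this by the probability that \emph{no point of} $[1,N]\times\{0\}$ starts an infinite open path in $([1,\infty)\times[0,\infty))\cap\Lambda$ (see \eqref{survey}), which is exponentially small in $N$ by the contour estimate of \cite{Survey}, p.~1026. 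This reduction makes the exponential decay come from the number of independent-ish starting points rather than from a geometric lower bound on barrier length, and it sidesteps exactly the obstacle you correctly identify as the crux of your version --- proving, uniformly in $n$, that every closed barrier sealing $[1,N]\times\{n\}$ off from the bottom of the strip has length of order $N$. That lemma is true but delicate (and you leave it unproved), so as written your item (iii) has a gap that the paper's reduction avoids entirely. For items (i) and (ii) you defer to \cite{Conos}, which is legitimate; for the record, the paper's own derivation of (i) splits $\cup_n A_n^c$ into the event that $C_{(2,0)}$ is finite plus the event $\{\exists n:\hat{r}^{(-\infty,2]}_n<n/2\}$, using that $\hat{r}^{(-\infty,2]}_n=\hat{r}^{\{2\}}_n$ on the infinite-cluster event, and then quotes \eqref{limcluster} and \eqref{percolation slope}; item (ii) is immediate from $\Gamma^c\subset\cup_n A_n^c$. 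Your single contour sum over $L\geq cn$ would not by itself cover the finite-cluster contribution (a finite cluster is enclosed by a contour whose length need not grow with $n$), so the two-term decomposition is the cleaner way to organize (i).
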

\begin{proof}
Observe that in the set $\{|C_{(2,0)}|=\infty\}$ we have the following equality
\begin{align*}
&\{ m:  (2,0)\rightsquigarrow(m,n)\}\cap[\hat{l}^{2}_{n},\hat{r}^{2}_{n}]
\\&\hspace{2cm}=\{m: \exists\hspace{0.1 cm}m' \in(-\infty,2] \text{ such that } (m',0)\rightsquigarrow(m,n)\}\cap[\hat{l}^{2}_{n},\hat{r}^{2}_{n}],
\end{align*}
from where we deduce 
\begin{equation*}
\hat{r}^{(-\infty,2]}_n=\hat{r}^{\{2\}}_{n} \text{ a.s in } \{|C_{(2,0)}|=\infty\}.
\end{equation*} 
Therefore, we have 
\begin{equation}\label{estoaqui}
\hat{\pr}_p(\cup_n  A^c_{n})\leq \hat{\pr}_{p}(C_{(2,0)} \text{ is finite})+ \hat{\pr}_{p}(\exists\hspace{0.2 cm} n \geq 1: \hat{r}^{(-\infty,2]}_n < n/2).
\end{equation}
Thus, item $(i)$ follows from \eqref{estoaqui}, \eqref{limcluster} and \eqref{percolation slope}.

To prove item $(ii)$, we first observe that by the definition of the events $A_n$ we have
$$
\Gamma^{c}\subset \underset{n\geq0}{\bigcup}A^c_{n}.
$$
By item $(i)$, we have that there exists $ p_0$ such that for all $p\in(p_0,1]$
$$
\hat{\pr}_p(\Gamma^{c})\leq \hat{\pr}_p(\cup_n  A^c_{n})<\epsilon,
$$
which implies item $(ii)$.
To prove item $(iii)$, we observe that for the Bernoulli product measure it holds that
\begin{align}\label{survey}
\begin{split}
\hat{\pr}_p(\{C_n(N)\}^c)&=\hat{\pr}_p\left(\left\lbrace\begin{array}{l}
\text{there is no path connecting }[1,N]\times\{0\}\text{ with}\\
\text{ }[1,\infty)\times\{n\}\text{ inside }([1,\infty)\times [0,\infty))\cap \Lambda
\end{array}\right\rbrace\right)\\
&\leq \hat{\pr}_p\left(\left\lbrace\begin{array}{l}
\text{there is no infinite path beginning in}\\
 \text{ }[1,N]\times \{0\},\text{ inside }([1,\infty)\times [0,\infty))\cap \Lambda
\end{array}\right\rbrace\right).
\end{split}
\end{align}
Using the contour method, we obtain that, for $p$ close enough to $1$, there exist positive constants $c$ and $C$ depending on $p$ such that the last probability in \eqref{survey} is smaller than $Ce^{-cN}$ (see \cite{Survey}, page $1026$).
\end{proof}
For stating the next lemma  we  need to define the set
\begin{equation}\label{unGammadeperco}
\tilde{\Gamma}_n(i)=\left\lbrace \begin{array}{c}
 \text{there exists  a path connecting } [n/2+i,\infty)\times\{0\}\cap \Lambda \rightsquigarrow (\imath,n)
\text{ and}\\
\text{this path does not intersect the set } \{(m,s)\in \Lambda: m\leq n/2-s/2+i \}
\end{array}\right\rbrace,
\end{equation}where $\imath=\imath(i,n)=i+2$, if $i+n$ is even and $\imath=\imath(i,n)=i+1$ otherwise. When $i=0$, we omit the index in the set $\tilde{\Gamma}_n$.
\begin{lemma}\label{Lemak-pendentpercolation2}
For $\epsilon$ and $k\in \mathbb{N}$, there exists $\delta>0$ such that if $(X, \mathcal{X}, \hat{\pr})$ is a $k$-dependent oriented percolation system with closure below $\delta$, then for all positive integer $n$ we have 
\begin{itemize}
\item[(i)] $
\hat{\pr}(\tilde{\Gamma}_n)> 1-\epsilon;
$ 
\item[(ii)]$\hat{\pr}(\{C_n(N)\}^c)\leq Ce^{-cN}$ for all $N$.
\end{itemize}

\end{lemma}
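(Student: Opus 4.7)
My approach is to reduce both items to the Bernoulli case using the stochastic domination of Lemma \ref{LiggettSmallClosure}, and then to invoke the corresponding estimates of Lemma \ref{Lemmak-dependentpercolation}.

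Fix $\epsilon>0$. Using items (ii)--(iii) of Lemma \ref{Lemmak-dependentpercolation}, I first choose $p\in(0,1)$ close enough to $1$ so that $\hat{\pr}_p(\Gamma)\geq 1-\epsilon$ and $\hat{\pr}_p(\{C_n(N)\}^c)\leq Ce^{-cN}$ both hold, and then by Lemma \ref{LiggettSmallClosure} I fix $\delta>0$ so small that any $k$-dependent percolation system $(X,\mathcal{X},\hat{\pr})$ with closure below $\delta$ stochastically dominates $\hat{\pr}_p$. The events $\tilde{\Gamma}_n$ and $C_n(N)$ are both increasing, since adding open sites can only create additional oriented open paths; consequently stochastic domination immediately yields
\[
\hat{\pr}(\tilde{\Gamma}_n)\geq\hat{\pr}_p(\tilde{\Gamma}_n),\qquad \hat{\pr}(\{C_n(N)\}^c)\leq\hat{\pr}_p(\{C_n(N)\}^c).
\]

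Item (ii) is then immediate from Lemma \ref{Lemmak-dependentpercolation}(iii) applied to the right-hand side. For item (i), I plan to compare $\tilde{\Gamma}_n$ to the event $\Gamma_n$ appearing in Lemma \ref{Lemmak-dependentpercolation}(ii) via the time-reversal $\sigma\colon(m,s)\mapsto(m,n-s)$, composed with a translation in $m$ by one unit when $n$ is odd so that the image of the lattice lands back in $\Lambda$. Both operations preserve the Bernoulli law $\hat{\pr}_p$, and together they carry an upward-oriented open path into an upward-oriented open path (after relabeling time). Under this symmetry a path witnessing $\Gamma_n$---starting at $(2,0)$, ending in $\mathbb{Z}\times\{n\}$, and avoiding $\{m\leq s/2+1\}$---is transformed into a path starting in $[n/2+2,\infty)\times\{0\}$, ending at $(\imath(0,n),n)$, and avoiding the region $\{m\leq n/2-s/2\}$; in particular it witnesses $\tilde{\Gamma}_n$. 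Hence $\hat{\pr}_p(\tilde{\Gamma}_n)\geq\hat{\pr}_p(\Gamma_n)\geq\hat{\pr}_p(\Gamma)\geq 1-\epsilon$, and combining this with the monotonicity bound above yields (i).

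The hard part is not the strategy but the bookkeeping in the reflection step: one has to check that the parity convention of $\Lambda=\{(m,n):m+n\text{ even}\}$ is preserved---this is precisely why $\imath(i,n)$ is defined piecewise according to the parity of $i+n$---and that the avoidance region in $\Gamma_n$ is indeed at least as large as the $\sigma$-image of the avoidance region in $\tilde{\Gamma}_n$, so that the inclusion $\sigma(\Gamma_n)\subset\tilde{\Gamma}_n$ holds. Once these routine verifications are carried out, the proof is a straightforward coupling argument using no ingredients beyond those already developed in Lemmas \ref{LiggettSmallClosure} and \ref{Lemmak-dependentpercolation}.
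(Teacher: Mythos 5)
Your proposal is correct and follows essentially the same route as the paper: fix $p$ via Lemma \ref{Lemmak-dependentpercolation}, fix $\delta$ via Lemma \ref{LiggettSmallClosure} so that $\hat{\pr}$ dominates $\hat{\pr}_p$, use that $\tilde{\Gamma}_n$ and $C_n(N)$ are increasing, and relate $\tilde{\Gamma}_n$ to $\Gamma_n$ (hence to $\Gamma$) by the time-reversal symmetry of the Bernoulli measure, with the parity adjustment for odd $n$. The paper carries out the same reflection argument (splitting the odd and even cases and illustrating it with a figure), so the only difference is that you defer the parity bookkeeping to a "routine verification" where the paper spells it out case by case.
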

\begin{proof}
To prove both items we take $p_0$ as in item $(ii)$ of Lemma \ref{Lemmak-dependentpercolation} and $\delta$ as in Lemma \ref{LiggettSmallClosure},  such that for all $k$-dependent oriented percolation system  $(X, \mathcal{X}, \hat{\pr})$ with closure under $\delta$,  $\hat{\pr}$ stochastically  dominates $\hat{\pr}_{p_0}$. 
 
To prove item $(i)$, we first suppose that $n$ is an odd positive integer. Note that for each path located to the right of the line $x=n/2-y/2$ that connects  $[ n/2,\infty)\times\{0\}$ with $(1,n)$,  we can  construct another path, to the right of the line $x=y/2+1$, connecting $(2,0)$ with $[ n/2,\infty)\times\{n\}$, see Figure \ref{elcaminoverde}. By construction, both paths have the same probability to occur under the Bernoulli product measure $\hat{\pr}_{p_0}$. Therefore
\begin{figure}[ht!]
\begin{center}
\begin{minipage}[c]{5cm}
  \begin{overpic}[scale=0.5,unit=1mm]{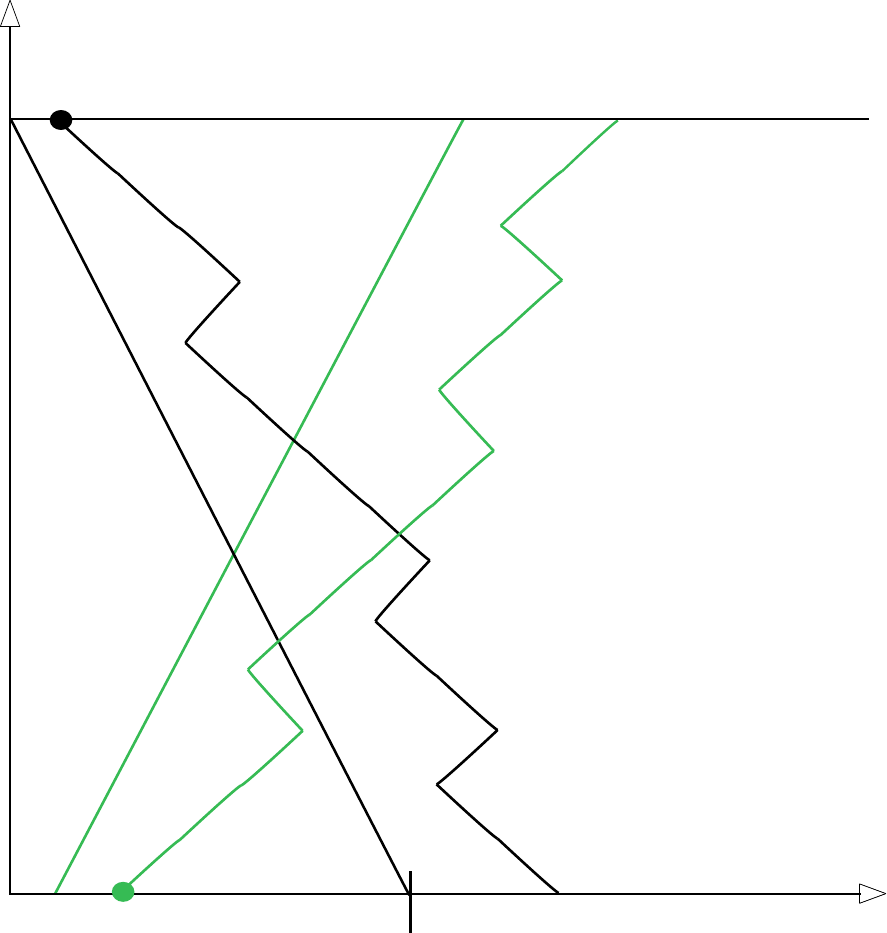}
   % \begin{overpic}[grid,scale=0.8,unit=1mm]{MetaR.pdf}
   \put(-1.5,40.5){\parbox{0.4\linewidth}{
\begin{tiny}
$n$
\end{tiny}}}
\put(1,42){\parbox{0.4\linewidth}{
\begin{tiny}
$(1, n)$
\end{tiny}}}
\put(18,41.5){\parbox{0.4\linewidth}{
\begin{tiny}
\textcolor{miverde}{$x=\frac{y}{2}+1$}
\end{tiny}}}
\put(3.5,-0.5){\parbox{0.4\linewidth}{
\begin{tiny}
$(2,0)$
\end{tiny}}}
\put(21,-2){\parbox{0.4\linewidth}{
\begin{tiny}
$x=\frac{n}{2}-\frac{y}{2}$
\end{tiny}}}
\end{overpic}
 \end{minipage}
 \hspace{1cm}
 \begin{minipage}[c]{5cm}
 \begin{overpic}[scale=0.5,unit=1mm]{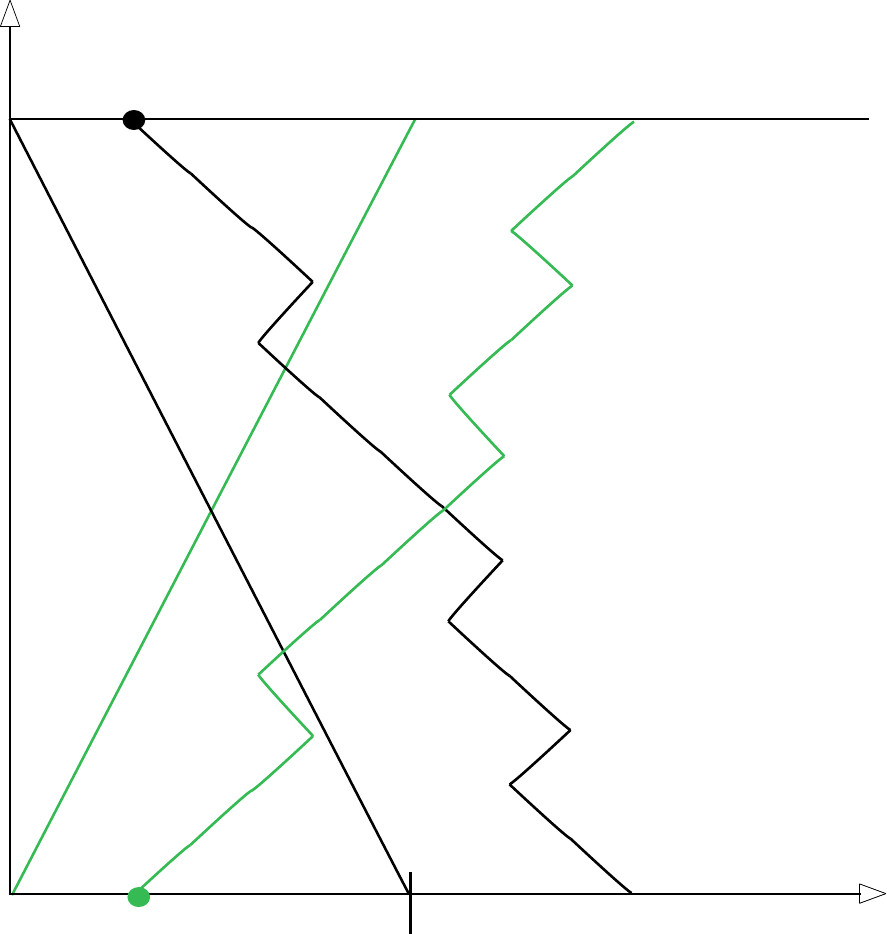}
   % \begin{overpic}[grid,scale=0.8,unit=1mm]{MetaR.pdf}
   \put(-1.5,40.5){\parbox{0.4\linewidth}{
\begin{tiny}
$n$
\end{tiny}}}
\put(4.5,42){\parbox{0.4\linewidth}{
\begin{tiny}
$(2, n)$
\end{tiny}}}
\put(18,41.5){\parbox{0.4\linewidth}{
\begin{tiny}
\textcolor{miverde}{$x=\frac{y}{2}$}
\end{tiny}}}
\put(3.5,-0.5){\parbox{0.4\linewidth}{
\begin{tiny}
$(2,0)$
\end{tiny}}}
\put(21,-2){\parbox{0.4\linewidth}{
\begin{tiny}
$x=\frac{n}{2}-\frac{y}{2}$
\end{tiny}}}
\end{overpic}
 \end{minipage}
 \end{center}
 \caption{In the left figure, $n$ is considered odd and in the right figure, $n$ is even. In both cases, under the Bernoulli product measure, the green path at the right of the green line has the same probability that the path at the right of the black line.}

\label{elcaminoverde}
\end{figure}
\begin{equation}\label{eq1}
\hat{\pr}_{p_0}(\tilde{\Gamma}_n)= \hat{\pr}_{p_0}(\Gamma_{n})\geq \hat{\pr}_{p_0}(\Gamma)\geq 1-\epsilon,
\end{equation}
where the last inequality in \eqref{eq1} follows by item $(ii)$ of Lemma \ref{Lemmak-dependentpercolation}. Moreover, we have that $\tilde{\Gamma}_n$ is an increasing set and therefore
\begin{align}\label{veamos}
\hat{\pr}(\tilde{\Gamma}_n)\geq \hat{\pr}_{p_0}(\tilde{\Gamma}_n).
\end{align}
Hence, \eqref{eq1} and \eqref{veamos} imply the desired lower bound.

To conclude the proof of item $(i)$,  we consider the case where $n$ is a positive even integer. Observe that for each path that is to the right of the line $x=\frac{n}{2}-\frac{y}{2}$ and connects  $[n/2,\infty)\times\{0\}$ with $(2,n)$, we can  construct another path to the right of the line $x=\frac{y}{2}$ that connects  $(2,0)$ with $[ n/2,\infty)\times\{n\}$. Also, if there exists a path to the right of the line $x=\frac{y}{2}+1$ connecting $(2,0)$ with $[ n/2,\infty)\times\{n\}$, then there exists a path to the right of the line $x=\frac{y}{2}$, connecting $(2,0)$ with $[ n/2,\infty)\times\{n\}$. Thus, we have
$$
\hat{\pr}_{p_0}(\tilde{\Gamma}_n)\geq \hat{\pr}_{p_0}(\Gamma_n)\geq 1-\epsilon.
$$
 The rest of the proof runs as in the case where $n$ is odd. 

Item $(ii)$ is a consequence of the fact that the event $C_n(N)$ is increasing, $\hat{\pr}$ dominates the measure $\hat{\pr}_{p_0}$, and item $(iii)$ of Lemma \ref{Lemmak-dependentpercolation}. 
\end{proof}
 	We now   present the Mountford-Sweet renormalization introduced in  \cite{MountfordSweet} for the contact process with range $R>1$, which is  a measurable map  with state space  $X$.  We denote this map by $\Psi$ and observe that its definition depends on two positive integers $\hat{N}$ and $\hat{K}$. 
 	
 	Let $\hat{N}$ and $\hat{K}$ be two positive integers. 
		Given $m\in \mathbb{Z}$ and $n \in \mathbb{Z}^{+}$, we define the following sets 
\begin{align*}
&\mathcal{I}^{\hat{N}}_m=\left(\frac{m\hat{N}}{2}-\frac{\hat{N}}{2},\frac{m\hat{N}}{2}+\frac{\hat{N}}{2}\right]\cap \mathbb{Z},\quad
I^{\hat{N},\hat{K}}_{(m,n)}=\mathcal{I}^{\hat{N}}_m\times \{\hat{K}\hat{N}n\},\\&
J^{\hat{N},\hat{K}}_{(m,n)}=\left(\frac{m\hat{N}}{2}-R, \frac{m\hat{N}}{2}+R\right)\times [\hat{K}\hat{N}n, \hat{K}\hat{N}(n+1)].
\end{align*}
We call the set 
$$
I^{\hat{N},\hat{K}}_{(m,n)}\cup J^{\hat{N},\hat{K}}_{(m,n)}\cup I^{\hat{N},\hat{K}}_{(m,n+1)}
$$
the renormalized box corresponding to $(m,n)$, or just the box $(m,n)$.

To define $\Psi$ we start considering an auxiliary map $\Phi\in \{0,1,2\}^{\Lambda}$. Given $(m,0)\in \Lambda$, set $\Phi(m,0)=1$ if the following conditions are satisfied:
\begin{align}
&\text{for each interval }I\subset \mathcal{I}^{\hat{N}}_{m-1} \cup \mathcal{I}^{\hat{N}}_{m+1} \text{ of length } \sqrt{\hat{N}} \text{ it holds } I \cap \xi^{\mathbb{Z}}_{\hat{N}\hat{K}}\neq \emptyset;
\label{MSa0}\\
&\begin{array}{c}
\text{if }x\in  \mathcal{I}^{\hat{N}}_{m-1} \cup \mathcal{I}^{\hat{N}}_{m+1}\text{ and }\mathbb{Z}\times{\{0\}}\rightarrow (x,\hat{K}\hat{N}),
\text{ then } I^{\hat{N},\hat{K}}_{(m,0)}\rightarrow (x,\hat{K}\hat{N});
\end{array}\label{MSb0}\\
&\text{if }(x,s) \in J^{\hat{N},\hat{K}}_{(m,0)} \text{ and }\mathbb{Z}\times{\{0\}}\rightarrow (x,s)
\text{ then }I^{\hat{N},\hat{K}}_{(m,0)}\rightarrow (x,s);
\label{MSc0}\\
&\left\lbrace\begin{array}{c}
x\in \mathbb{Z}:\exists s, t,0\leq s<t\leq \hat{K}\hat{N},\\
 y\in  \mathcal{I}^{\hat{N}}_{m-1} \cup \mathcal{I}^{\hat{N}}_{m+1}  \text{ such that } (x,s)\rightarrow (y,t)
\end{array}\right\rbrace\subset\left[ \frac{m\hat{N}}{2}-2\alpha \hat{K}\hat{N}, \frac{m\hat{N}}{2}+2\alpha \hat{K}\hat{N} \right].\label{MSd0}
\end{align}
 Otherwise, set $\Phi(m,0)=0$. Given $(m,n)\in \Lambda$ with $n\geq 1$, set $\Phi(m,n)=1$ if
\begin{align}
&1\in \{\Phi(m-1,n-1),\Phi(m+1,n-1)\};\label{Msa'}\\
&\text{for each interval }I\subset \mathcal{I}^{\hat{N}}_{m-1} \cup \mathcal{I}^{\hat{N}}_{m+1} \text{ of length } \sqrt{\hat{N}} \text{ it holds } I \cap \xi^{\mathbb{Z}}_{\hat{K}\hat{N}(n+1)}\neq \emptyset;
\label{MSa}\\
&\begin{array}{c}
\text{if }x\in  \mathcal{I}^{\hat{N}}_{m-1} \cup \mathcal{I}^{\hat{N}}_{m+1}\text{ and }\xi^{\mathbb{Z}}_{\hat{K}\hat{N}n}\times{\{\hat{K}\hat{N}n\}}\rightarrow (x,\hat{K}\hat{N}(n+1)),\\
\text{ then }(\xi^{\mathbb{Z}}_{\hat{K}\hat{N}n}\times{\{\hat{K}\hat{N}n\}})\cap I^{\hat{N},\hat{K}}_{(m,n)}\rightarrow (x,\hat{K}\hat{N}(n+1));
\end{array}\label{MSb}\\
&\text{if }(x,s) \in J^{\hat{N},\hat{K}}_{(m,n)} \text{ and }\xi^{\mathbb{Z}}_{\hat{K}\hat{N}n}\times{\{\hat{K}\hat{N}n\}}\rightarrow (x,s)
\text{ then }(\xi^{\mathbb{Z}}_{\hat{K}\hat{N}n}\times{\{\hat{K}\hat{N}n\}})\cap I^{\hat{N},\hat{K}}_{(m,n)}\rightarrow (x,s);
\label{MSc}\\
    &\left\lbrace\begin{array}{c}
x\in \mathbb{Z}:\exists s, t,\hat{K}\hat{N}n\leq s<t\leq \hat{K}\hat{N}(n+1),\\
 y\in  \mathcal{I}^{\hat{N}}_{m-1} \cup \mathcal{I}^{\hat{N}}_{m+1}  \text{ such that } (x,s)\rightarrow (y,t)
\end{array}\right\rbrace\subset\left[ \frac{m\hat{N}}{2}-2\alpha \hat{K}\hat{N}, \frac{m\hat{N}}{2}+2\alpha \hat{K}\hat{N} \right].\label{MSd}
\end{align}
If \eqref{Msa'} fails set $\Phi(m,n)=2$, and in every other case set $\Phi(m,n)=0$. Finally, we define 

\begin{equation*}
\Psi(m,n)=\left\lbrace
\begin{array}{cl}
0, & \text{ if }\Phi(m,n)=0\\
1, & \text{ otherwise}.
\end{array}\right.
\end{equation*}
We now make several remarks about the conditions in the definition of $\Psi$. First, equations \eqref{MSa0} and \eqref{MSa} imply that there are many sites at the top of the boxes  $(m-1,n)$ and $(m+1,n)$ that are connected in the Harris construction with $\mathbb{Z}\times\{ 0\}$. Second, equations \eqref{MSb0} and \eqref{MSb} yield that if  a site at the top of the box  $(m,n)$ is connected in the Harris construction with $\mathbb{Z}\times \{0\}$, then it is connected with the base of the box $(m,n)$. Third, equations \eqref{MSc0} and  \eqref{MSc} guarantee that if a  site in the rectangle $J^{\hat{N},\hat{K}}_{(m,n)}$ is connected with $\mathbb{Z}\times \{0\}$, then it is connected with the base of the  box $(m,n)$. Finally, equations \eqref{MSd0} and \eqref{MSd} imply that every path with initial time larger than $\hat{K}\hat{N}n$ and final point in the  box  $(m,n)$ is inside the rectangle 
\begin{equation}\label{rect}\left[ \frac{m\hat{N}}{2}-2\alpha \hat{K}\hat{N}, \frac{m\hat{N}}{2}+2\alpha \hat{K}\hat{N} \right]\times[\hat{K}\hat{N}n,\hat{K}\hat{N}(n+1)].
\end{equation}
 The  rectangle in \eqref{rect} is called the envelope of the box $(m,n)$. Additionally, we observe  that the constant  $\alpha$ in equation \eqref{MSd} is as in \eqref{defalpha}. 
 
 %The following proposition state that for any $\delta$, it is possible to choose $\hat{N}$ and $\hat{K}$ such that the law of $\Psi$ is a   $k$\emph{-dependent} percolation system  with closure under $\delta$. 
\begin{proposition}[See \cite{MountfordSweet}]
\label{Mountford-Sweet}
There exist $k$ and $\hat{K}$ with the property that, for any $\delta>0$ there is $\hat{N}_0$ such that the law of $\Psi$ is a $k$-dependent percolation system with closure under $\delta$ for all $\hat{N}>\hat{N}_0$. 
\end{proposition}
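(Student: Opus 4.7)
The plan is to reduce the assertion to a uniform conditional bound of the form $\hat{\pr}(\Psi(m,n)=0 \mid \mathcal{F}_{\hat{K}\hat{N}n}) < \delta$ together with a spatial-independence structure. First I would fix $\hat{K}$ and then set $k = \lceil 8\alpha \hat{K}\rceil$, so that whenever $\|m_i - m_j\| > 2k$ the envelopes $[m_i\hat{N}/2 - 2\alpha\hat{K}\hat{N}, m_i\hat{N}/2 + 2\alpha\hat{K}\hat{N}]$ from \eqref{rect} are disjoint. Conditional on $\mathcal{F}_{\hat{K}\hat{N}n}$ (which fixes $\xi^{\mathbb{Z}}_{\hat{K}\hat{N}n}$), the event $\{\Psi(m_i,n)=0\}$ is measurable with respect to the Harris marks inside its own envelope plus the restriction of $\xi^{\mathbb{Z}}_{\hat{K}\hat{N}n}$ to that envelope, because all conditions \eqref{MSa}--\eqref{MSd} refer only to that space-time slab. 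Disjointness of envelopes thus yields conditional independence of the failure events at a given level, and the $k$-dependence with closure below $\delta$ will follow by induction on $n$ once the one-site bound is established.

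Next I would bound the probability that each of the four conditions fails. For \eqref{MSa0}/\eqref{MSa} I would invoke a density estimate for the contact process: the distribution of $\xi^{\mathbb{Z}}_t$ dominates the upper invariant measure $\mu$, which has positive density; a union bound over the $O(\sqrt{\hat{N}})$ sub-intervals of length $\sqrt{\hat{N}}$ in $\mathcal{I}_{m\pm 1}^{\hat{N}}$ together with an exponential tail for emptiness gives a probability $O(\sqrt{\hat{N}}\,e^{-c\sqrt{\hat{N}}})$ of failure. For \eqref{MSd0}/\eqref{MSd}, I would apply large-deviation bounds for the rightmost infection, which by \eqref{defalpha} travels at asymptotic speed $\alpha$; the probability that any path with final point in the box reaches distance more than $2\alpha\hat{K}\hat{N}$ from $m\hat{N}/2$ within time $\hat{K}\hat{N}$ is exponentially small in $\hat{K}\hat{N}$.

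The main obstacle is \eqref{MSb0}/\eqref{MSb} and \eqref{MSc0}/\eqref{MSc}, the resurrection statements asserting that any connection from $\xi^{\mathbb{Z}}_{\hat{K}\hat{N}n}\times\{\hat{K}\hat{N}n\}$ to a point in the upper slab of the box can be re-routed through the base $I^{\hat{N},\hat{K}}_{(m,n)}$. The strategy here is the core Mountford-Sweet surgery: conditional on \eqref{MSd} holding, any backward path from $(x,s)$ stays inside the envelope, and one uses the density provided by \eqref{MSa} at the base together with an abundance of short range $\lambda$-arrows (of length at most $R$) to splice the path onto the base with high probability. Quantitatively, one partitions the time interval $[\hat{K}\hat{N}n,s]$ into sub-blocks of length $O(1)$, and in each sub-block shows that the dual path meets the base with positive probability bounded away from zero using a finite-range coupling; failure over $\Theta(\hat{K}\hat{N})$ sub-blocks is then exponentially small in $\hat{N}$. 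This is the delicate part because the splicing estimate must be uniform in the possible configurations $\xi^{\mathbb{Z}}_{\hat{K}\hat{N}n}$ compatible with \eqref{MSa}.

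Assembling the pieces, I would first choose $\hat{K}$ so large that the large-deviation contribution of \eqref{MSd} is below $\delta/4$, then pick $\hat{N}_0$ so that for $\hat{N}>\hat{N}_0$ the density estimate \eqref{MSa} and the surgery estimates \eqref{MSb}--\eqref{MSc} each contribute at most $\delta/4$. Summing the four contributions gives the required one-site closure bound $\hat{\pr}(\Psi(m,n)=0\mid\mathcal{F}_{\hat{K}\hat{N}n})<\delta$, and the independence structure above upgrades this to the full $k$-dependent small-closure property. The compatibility clause \eqref{Msa'} is automatically satisfied because the corresponding failure event is encoded in $\Phi=2$ but still maps to $\Psi=0$, so no separate estimate is needed there. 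The self-contained argument is essentially the one in \cite{MountfordSweet}, and the key technical step, as noted, is the path-surgery lemma behind \eqref{MSb} and \eqref{MSc}.
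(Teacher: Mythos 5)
The paper does not prove this proposition: it is stated with the header ``See \cite{MountfordSweet}'' and imported as a black box, so there is no internal proof to compare your argument against. I will therefore only assess your reconstruction of the Mountford--Sweet argument on its own terms.

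Your outline (locality of the failure event plus a union bound over the four defining conditions, with the density estimate for \eqref{MSa0}/\eqref{MSa}, large deviations for \eqref{MSd0}/\eqref{MSd}, and a splicing argument for \eqref{MSb0}--\eqref{MSc}) is the right general shape, but three points need attention. First, the quantifier order: the proposition requires $k$ and $\hat{K}$ to be chosen \emph{before} $\delta$, with only $\hat{N}_0$ allowed to depend on $\delta$; your final assembly ``first choose $\hat{K}$ so large that the large-deviation contribution of \eqref{MSd} is below $\delta/4$'' reverses this. It is repairable (that contribution is exponentially small in $\hat{K}\hat{N}$, hence small for fixed $\hat{K}$ once $\hat{N}$ is large), but as written you have not proved the statement as quantified. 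Second, the conditional-independence claim is too strong: the failure of \eqref{MSd} is witnessed by a path entering $\mathcal{I}^{\hat{N}}_{m\pm1}$ from \emph{outside} the envelope, so $\{\Psi(m,n)=0\}$ is not measurable with respect to the marks inside the envelope, and disjointness of envelopes does not by itself give conditional independence. This is exactly why the conclusion is only $k$-dependence with small closure rather than independence, and it requires an extra step (e.g.\ sandwiching the failure event between a local event and an event of negligible probability). Third, and most importantly, the splicing step behind \eqref{MSb} and \eqref{MSc} is the entire content of the theorem, and your description of it (``in each sub-block the dual path meets the base with positive probability bounded away from zero'') is not obviously correct uniformly over configurations --- the dual of $(x,s)$ may consist of few sites located far from $I^{\hat{N},\hat{K}}_{(m,n)}$ --- and you ultimately defer it to \cite{MountfordSweet}. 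So the proposal is an honest sketch of where the difficulties lie, but it does not constitute a self-contained proof; given that the paper itself treats the result as a citation, that is an acceptable position, provided the deferral is made explicit rather than presented as an argument.
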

We note that for $\epsilon>0$ if we choose
\begin{align}
&k \text{ and } \hat{K} \text{ as in Proposition }\ref{Mountford-Sweet};\label{hat{K}}\\
&p_0\text{ as in Lemma }\ref{Lemmak-dependentpercolation};\label{p_0}\\
&\delta=\delta(k,p_0,\epsilon)\text{ as in Lemma }\ref{LiggettSmallClosure} \text{  and Lemma }\ref{Lemak-pendentpercolation2};\label{delta}\\
&\hat{N}_0=\hat{N}_0(\delta,k,\hat{K})\text{ as in Proposition }\ref{Mountford-Sweet}\label{hat{N}};\\
&\hat{N}>\hat{N}_0\label{hatN};
\end{align}
 we have that the law of  $\Psi$ is a $k$\emph{-dependent} percolation system with closure under $\delta$ and it is stochastically larger than $\hat{\pr}_{p_0}$. Also, the law of $\Psi$ satisfies the statement of  Lemma \ref{Lemak-pendentpercolation2}.
 
 Next, we  recall the definition of expanding point that appears in \cite{Conos}. Before this definition, we introduce the following sets in oriented percolation
\begin{equation*}
\Gamma^{-}_{ n}(i,k)=  \left \lbrace \begin{array}{cc}
     \text{ there exists a path connecting } (i-2,k) \rightsquigarrow \mathbb{\Z}\times\{n+k\}\text{ such that}\\
     
     \text{  this path does not intersect  the set } \{(m,s)\in \Lambda: m\geq -(s-k)/2+i-1\}   \end{array}\right\rbrace
\end{equation*}
and
$$
{\Gamma}^-(i,k)=\underset{n\in \mathbb{N}}{\bigcap}\Gamma^-_{n}(i,k),
$$
where $(i,k)$ is a point in $\Lambda$. 
\begin{definition}\label{Def:expanding}
The point $(x,t)\in \mathbb{Z}\times \mathbb{R}^{+}$  is \emph{expanding} if:
\begin{enumerate}
\item[(1)] for all $z \in  \mathcal{I}^{\hat{N}}_{i-2} \cup \mathcal{I}^{\hat{N}}_{i} \cup \mathcal{I}^{\hat{N}}_{i+2}$,  $(x, t) \leftrightarrow (z, k\hat{K}\hat{N})$ inside $\mathcal{I}^{\hat{N}}_{i-2} \cup \mathcal{I}^{\hat{N}}_{i} \cup \mathcal{I}^{\hat{N}}_{i+2}$, where  $k=k(t)=\left\lceil \frac{t}{\hat{K}\hat{N}}\right\rceil$ and $i=i(x,t)$ is such that $(i,k)\in \Lambda$ and $x\in\mathcal{I}^{\hat{N}}_{i}$;
\item[(2)] $\Psi \in \Gamma(i,k)\cap \Gamma^-(i,k)$.
\end{enumerate}
\end{definition}If $(x,t)$ is expanding, we call the cone $$\lbrace(y,s) \in \mathbb{Z}\times[t,\infty) :\, x - s/2 \leq  y \leq x + s/2\rbrace,$$ 
 the \emph{descendency barrier} of $(x,t)$.

Furthermore, we call the point $(x,t)$ \emph{expanding to the right}  if the property $(1)$ in Definition \ref{Def:expanding} is satisfied and  $\Psi\in \Gamma(i,k)$. Similarly, we call the point $(x,t)$ \emph{expanding to the left} if the first property in Definition \ref{Def:expanding} is satisfied and  $\Psi \in \Gamma^-(i,k)$. 

The proof of the following proposition can be found  in \cite{MountfordSweet} and \cite{Conos}.
\begin{proposition}\label{AMPV}
For any $R\in \mathbb{N}$ and $\lambda >\lambda_ c (\mathbb{Z}, R)$, there exists $\overline{\delta}> 0 $  such that
\begin{equation*}
\pr((0,0) \text{ is }\text{\emph{expanding}})>\overline{\delta}.
\end{equation*}
Also, for every $\epsilon>0$ there exists $N$ such that, for all $A$ subset of $\mathbb{Z}$ with $|A|\geq N$ 
$$\pr(\text{no point of }A\times\{0\}\text{ is \emph{expanding}})<\epsilon.$$
\end{proposition}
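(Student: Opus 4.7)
My plan is to combine the Mountford--Sweet renormalization machinery with a local supercritical estimate for the classic contact process. Fix a small auxiliary parameter $\epsilon' > 0$ and take $k,\hat{K},\delta,\hat{N}_0$ as in \eqref{hat{K}}--\eqref{hatN}, choosing $\hat{N} > \hat{N}_0$ large enough that the law of $\Psi$ dominates $\hat{\pr}_{p_0}$ with $p_0$ so close to $1$ that Lemma \ref{Lemak-pendentpercolation2}(i), applied to $\tilde\Gamma_n$ and its left--right reflection, gives
\[
\pr(\Psi \in \Gamma(i,k)) > 1-\epsilon', \qquad \pr(\Psi \in \Gamma^-(i,k)) > 1-\epsilon'
\]
uniformly in $(i,k) \in \Lambda$. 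The crucial decomposition is: $(x,t)$ is expanding $= \mathrm{(E1)} \cap \mathrm{(E2)}$, where (E1) is the Harris-construction connectivity requirement in item (1) of Definition \ref{Def:expanding} and (E2) is the $\Psi$-event $\{\Psi \in \Gamma(i,k) \cap \Gamma^-(i,k)\}$.

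For the first statement (at $(0,0)$), I would show that (E1) has probability bounded below by some $\delta_1>0$ that depends only on $\lambda,R,\hat{K},\hat{N}$. The argument goes as follows: by supercriticality, $\pr(T^{\{0\}} = \infty) > 0$ and on survival the process spreads linearly with slope $\alpha$ (cf. \eqref{defalpha}). Hence, up to a small-probability event, within the three adjacent cells $\mathcal{I}^{\hat{N}}_{-2}\cup\mathcal{I}^{\hat{N}}_{0}\cup\mathcal{I}^{\hat{N}}_{2}$ and in time $\hat{K}\hat{N}$, descendants of $(0,0)$ reach every site at height $\hat{K}\hat{N}$ inside these cells (this is the restriction-to-a-slab version of the standard coupling between the supercritical contact process and oriented percolation, and is what the Mountford--Sweet scale is designed to make work). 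With $\epsilon' < \delta_1/2$ one gets
\[
\pr((0,0) \text{ expanding}) \geq \pr(\mathrm{E1}) - \pr(\mathrm{E2}^c) \geq \delta_1 - 2\epsilon' > 0,
\]
giving $\overline{\delta}:=\delta_1 - 2\epsilon'$.

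For the second statement, given $A$ with $|A| \geq N$ I select a sparse subfamily $x_1,\ldots,x_M \in A$ lying in distinct renormalization cells $\mathcal{I}^{\hat{N}}_{i_j}$ with $|i_j - i_{j'}| \geq 2k+2$; pigeonhole gives $M \geq cN/\hat{N}$. The decisive observation is that the local events $\{(x_j,0) \text{ satisfies } \mathrm{E1}\}$ are mutually independent (they depend on pairwise disjoint regions of the Harris construction, by the restriction to a slab of width $\leq 5\hat{N}/2$ and height $\hat{K}\hat{N}$) and each has probability $\geq \delta_1$. Let
\[
S^{(1)} = \#\{j : (x_j,0) \text{ satisfies } \mathrm{E1}\}, \qquad S^{(2)} = \#\{j : \Psi \in \Gamma(i_j,0) \cap \Gamma^-(i_j,0)\}.
\]
Then no $(x_j,0)$ is expanding only if $S^{(1)} + S^{(2)} \leq M$. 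I would bound this event by $\pr(S^{(1)} \leq M\delta_1/2) + \pr(M - S^{(2)} \geq M\delta_1/2)$: the first probability is $\leq e^{-cM}$ by a Chernoff bound, using independence and $\esp S^{(1)} \geq M\delta_1$; the second is $\leq 2\epsilon'/\delta_1$ by Markov, using $\esp[M-S^{(2)}] \leq 2M\epsilon'$. Picking $\epsilon'$ so that $2\epsilon'/\delta_1 < \epsilon/2$ (which fixes $\hat{N}$) and then $N$ so that $e^{-cN/\hat{N}} < \epsilon/2$, one concludes. Finally, no point of $A\times\{0\}$ being expanding implies in particular no point of the selected subfamily is, so the same bound applies to $A$.

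The main obstacle is Part 1's lower bound on (E1), i.e.\ showing a single seed can fill three consecutive $\hat{N}$-cells while staying inside them with probability uniformly bounded from below. This is precisely the kind of restricted survival-and-filling estimate that underlies the Mountford--Sweet construction (it is exactly why their renormalized configuration $\Phi$ simultaneously encodes \eqref{MSa0}--\eqref{MSd0}), and it needs the shape theorem together with an in-a-slab restriction argument. Once Part 1 is established, Part 2 is purely a first-moment/concentration combination, with the $k$-dependent percolation providing the simultaneous bound on $S^{(2)}$ and the spatial disjointness providing the independence of $S^{(1)}$.
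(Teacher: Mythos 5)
The paper does not prove this proposition at all --- it is imported from \cite{MountfordSweet} and \cite{Conos} --- so there is no internal argument to compare against; judged on its own terms, your proof has a genuine gap in Part 1 that then propagates to Part 2. The problem is your lower bound on (E1). Condition (1) of Definition \ref{Def:expanding} asks for a path from $(x,t)$ to \emph{every} $z$ in $\mathcal{I}^{\hat N}_{i-2}\cup\mathcal{I}^{\hat N}_{i}\cup\mathcal{I}^{\hat N}_{i+2}$ at the fixed time $k\hat K\hat N$; this forces all $3\hat N$ of those sites to be occupied at that time by the process started from the single point $(x,t)$. That event is \emph{not} ``survival up to a small-probability correction'': for each block of $2R+1$ sites there is a fixed positive probability (death marks in the last unit of time, no incoming arrows) that the block is vacated just before time $k\hat K\hat N$, and these events are independent across well-separated blocks, so $\pr(\mathrm{E1})=\delta_1(\hat N)\le e^{-c\hat N}$. (The Mountford--Sweet conditions \eqref{MSa0}--\eqref{MSd0} you invoke only require density and coupling statements, never full occupation, so they do not supply the estimate you want.) This kills the union bound $\pr(\mathrm{E1})-\pr(\mathrm{E2}^c)\ge\delta_1-2\epsilon'$: your $\epsilon'$ must be fixed \emph{before} $\hat N$ (it determines $p_0$, then $\delta$ via Lemma \ref{LiggettSmallClosure}, then $\hat N_0$ via Proposition \ref{Mountford-Sweet}), while $\delta_1(\hat N)$ collapses exponentially as $\hat N$ grows, and you give no reason why $2\epsilon'(\hat N)<\delta_1(\hat N)$ for any admissible $\hat N$. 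The same circular requirement $\epsilon'<\epsilon\delta_1/4$ appears in your Part 2.

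The standard repair is to abandon the union bound. Either work with the coupling form of condition (1) (every point of the three cells \emph{that is reachable from $\mathbb{Z}\times\{t\}$} is reachable from $(x,t)$), whose probability is bounded below uniformly in $\hat N$ and close to the survival probability, or keep the literal condition and replace $\pr(\mathrm{E1}\cap\mathrm{E2})\ge\pr(\mathrm{E1})-\pr(\mathrm{E2}^c)$ by $\pr(\mathrm{E1}\cap\mathrm{E2})\ge\pr(\mathrm{E1})\,\pr(\mathrm{E2}\mid\mathrm{E1})$: since the closure bound defining a $k$-dependent system is a bound \emph{conditional on the past levels}, the domination of $\hat{\pr}_{p_0}$ on levels $\ge k$ survives conditioning on the Harris construction up to time $k\hat K\hat N$ (which is where (E1) lives), giving $\pr(\mathrm{E2}\mid\mathrm{E1})\ge 1-2\epsilon'$ and hence $\pr(\mathrm{E1}\cap\mathrm{E2})\ge \delta_1(1-2\epsilon')>0$ regardless of how small $\delta_1$ is. With that fix your Part 2 skeleton (sparse subfamily in disjoint cell-triples, independence of the local events, Chernoff for $S^{(1)}$ and a first-moment bound for $S^{(2)}$) is sound, apart from two small points: the probability of $\Gamma(i,k)\cap\Gamma^-(i,k)$ comes from Lemma \ref{Lemmak-dependentpercolation}(ii) together with stochastic domination (Lemma \ref{LiggettSmallClosure}), not from Lemma \ref{Lemak-pendentpercolation2}(i), which concerns the time-reflected events $\tilde\Gamma_n$; and you should not re-tune $\hat N$ as a function of the target $\epsilon$, since ``expanding'' is defined relative to the renormalization parameters already fixed in \eqref{hat{K}}--\eqref{hatN} and used as such in the rest of the paper.
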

\begin{corollary}\label{CorollaryAMPV}
For any $R\in \mathbb{N}$ and $\lambda >\lambda_ c (\mathbb{Z}, R)$, there exists $\overline{\delta}> 0 $  such that
\begin{equation*}
\pr((0,0) \text{ is }\text{\emph{expanding to the right}})>\overline{\delta}.
\end{equation*}
Also, for every $\epsilon>0$ there exists $N\geq 1$ such that, for all $A$ subset of $\mathbb{Z}$ with $|A|\geq N$ 
\begin{equation}\label{elncito}
\pr(\text{no point of }A\times\{0\}\text{ is  \emph{expanding to the right}})<\epsilon.
\end{equation}
\end{corollary}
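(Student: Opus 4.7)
The plan is very short: Corollary \ref{CorollaryAMPV} is an immediate consequence of Proposition \ref{AMPV} via monotonicity of the defining events.

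First I would unwind the definitions. A point $(x,t)$ is \emph{expanding} when condition (1) of Definition \ref{Def:expanding} holds and $\Psi \in \Gamma(i,k) \cap \Gamma^-(i,k)$. A point $(x,t)$ is \emph{expanding to the right} when condition (1) holds and $\Psi \in \Gamma(i,k)$. So being expanding to the right is strictly weaker than being expanding, which gives the event inclusion
\[
\{(x,t) \text{ is expanding}\} \subset \{(x,t) \text{ is expanding to the right}\}.
\]

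For the first assertion, by monotonicity of probability,
\[
\pr((0,0) \text{ is expanding to the right}) \geq \pr((0,0) \text{ is expanding}) > \overline{\delta},
\]
with $\overline{\delta}$ furnished by Proposition \ref{AMPV}.

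For the second assertion, taking complements and using again the event inclusion above, for any finite set $A \subset \mathbb{Z}$,
\[
\{\text{no point of } A \times \{0\} \text{ is expanding to the right}\} \subset \{\text{no point of } A \times \{0\} \text{ is expanding}\}.
\]
Given $\epsilon>0$, let $N$ be the integer provided by Proposition \ref{AMPV} so that the probability on the right is less than $\epsilon$ whenever $|A| \geq N$. Then, for the same $N$, the probability on the left is also less than $\epsilon$, establishing \eqref{elncito}.

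The argument has no real obstacle: the only ingredient beyond Proposition \ref{AMPV} is the observation that dropping the requirement $\Psi \in \Gamma^-(i,k)$ can only enlarge the event in question. No new estimates on the renormalized percolation are needed.
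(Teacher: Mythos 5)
Your proposal is correct and matches the paper's (implicit) reasoning: the paper gives no separate proof of Corollary \ref{CorollaryAMPV} precisely because "expanding" implies "expanding to the right," so both assertions follow from Proposition \ref{AMPV} by the monotonicity of the events, exactly as you argue.
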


\section{ Convergence results}\label{Convergence results}
%This section has two subsections. In Subsection \ref{Proof of invariant measure}, we prove the existence of  an invariant measure $\nu$, with infinite particles of type $1$ and of type $2$. In this subsection, we establish  Proposition \ref{tightness}, which is the key result for the proof of the existence of $\nu$. In Subsection \ref{Proof of main Theorem}, we prove that the measure $\nu$ is the limit, in distribution, of the process $\{\zeta^{\textbf{1},\textbf{2}}_t\}$.  In this subsection, we prove the technical Proposition \ref{lallave} that is essential to obtain our main results.
This section has two subsections. In Subsection \ref{Proof of invariant measure}, we establish Proposition \ref{tightness}, which is the key result to prove the existence of  an invariant measure $\nu$ with infinitely many particles of type $1$ and $2$. In Subsection \ref{Proof of main Theorem}, we present the technical Proposition \ref{lallave} that is essential to obtain our main results. In this subsection we also prove Theorem \ref{teoconvergence}.
\subsection{Existence of the invariant measure $\nu$}\label{Proof of invariant measure}
To simplify notation, we denote by $\textbf{r}^1_{t}$  the rightmost site at time $t$ occupied by a particle of type $1$ for the two-type contact process with initial configuration $\mathds{1}_{(-\infty,0]}+2\mathds{1}_{[1,\infty)}$. We denote by $\textbf{l}^2_{t}$ the leftmost site at time $t$ occupied by a particle of type $2$ for the two-type contact process with initial configuration $\mathds{1}_{(-\infty,0]}+2\mathds{1}_{[1,\infty)}$. By the symmetry of the Harris construction, this variable has the same distribution as $-\textbf{r}^1_{t}+1$. Therefore, the  next proposition is also valid for $-\textbf{l}^2_{t}$.
\begin{proposition}\label{tightness}
There exists $M$ such that
\begin{equation*}
\pr(\textbf{r}^{1}_t\geq M N)\leq Ce^{-cN},
\end{equation*}
for $t\geq \hat{N}\hat{K}$ and for all $N$.
\end{proposition}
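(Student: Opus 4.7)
The strategy exploits the priority of type-$2$ particles on $[1,\infty)$ together with the fact that $[1,\infty)$ is initially fully occupied by type $2$, and uses the Mountford--Sweet renormalization to build a robust type-$2$ barrier. First I would fix parameters $k$, $\hat{K}$, $p_{0}$, $\delta$, $\hat{N}_{0}$, $\hat{N}$ as specified in the preceding discussion (so that the law of $\Psi$ is a $k$-dependent percolation system with closure below $\delta$, stochastically dominating $\hat{\pr}_{p_{0}}$, and Lemma \ref{Lemak-pendentpercolation2} applies), and set $m^{*}:=\lceil (4\alpha\hat{K}\hat{N}+2)/\hat{N}\rceil$; by construction the envelope of any box $(m,n)\in\Lambda$ with $m\ge m^{*}$ is contained in $[1,\infty)$.

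The heart of the argument is a \emph{type-$2$ dominance} claim: for every $(m,n)\in\Lambda$ with $m\ge m^{*}$, if there exists an open $\Psi$-path from $[m^{*},\infty)\times\{0\}$ to $(m,n)$ lying inside $[m^{*},\infty)\times[0,n]\cap\Lambda$, then every site of $\mathcal{I}^{\hat{N}}_{m}\cap\xi^{\mathbb{Z}}_{\hat{K}\hat{N}n}$ is occupied by a type-$2$ particle in $\zeta^{\mathbf{1},\mathbf{2}}_{\hat{K}\hat{N}n}$. The proof is by induction on $n$. For $n=0$ the claim is immediate, since $\mathcal{I}^{\hat{N}}_{m}\subset[1,\infty)$ for $m\ge m^{*}$ and those sites begin as type $2$. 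For $n\ge 1$, let the open path pass through $(m',n-1)$ with $|m-m'|=1$ and $\Psi(m',n-1)=1$; condition \eqref{MSb} forces every particle in $\mathcal{I}^{\hat{N}}_{m}\cap\xi^{\mathbb{Z}}_{\hat{K}\hat{N}n}$ to descend from a particle in $I^{\hat{N},\hat{K}}_{(m',n-1)}\cap\xi^{\mathbb{Z}}_{\hat{K}\hat{N}(n-1)}$, which is type $2$ by the inductive hypothesis; condition \eqref{MSd} confines the relevant Harris path to the envelope of $(m',n-1)$, which lies inside $[1,\infty)$ by the choice of $m^{*}$. Since type $2$ has priority everywhere in $[1,\infty)$, no type-$1$ infection can overtake the path and the descendant remains of type $2$.

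To conclude, I would combine this claim with a percolation estimate. By translation invariance of the law of $\Psi$ and Lemma \ref{Lemak-pendentpercolation2}(ii) applied to shifted right halves, together with a standard Peierls-type contour argument in the $k$-dependent percolation (available because $\delta$ can be made arbitrarily small), one obtains that with probability at least $1-Ce^{-cN}$ every column in $[m^{*}+N,m^{*}+2N]$ at level $n=\lfloor t/(\hat{K}\hat{N})\rfloor$ is reachable from $[m^{*},\infty)\times\{0\}$ by an open $\Psi$-path inside $[m^{*},\infty)\times[0,\infty)\cap\Lambda$. By the dominance claim, on this event all boxes in that column range have only type-$2$ particles at the base at time $\hat{K}\hat{N}n$. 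Taking $M$ large enough that $MN$ exceeds $(m^{*}+2N)\hat{N}/2+2\alpha\hat{K}\hat{N}$, any type-$1$ particle at position $\ge MN$ at time $t\in[\hat{K}\hat{N}n,\hat{K}\hat{N}(n+1))$ would have to lie in one of these forbidden boxes (at level $n$ or $n+1$, using condition \eqref{MSd} to rule out long-range paths entering from outside the envelope), a contradiction. Hence $\pr(\textbf{r}^{1}_{t}\ge MN)\le Ce^{-cN}$ uniformly in $t\ge\hat{N}\hat{K}$.

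The principal obstacle is the passage from the single-column reachability supplied by Lemma \ref{Lemak-pendentpercolation2}(ii) to the uniform reachability of a whole column range $[m^{*}+N,m^{*}+2N]$; this is where the Peierls/contour step in the $k$-dependent oriented percolation is essential, and it has to be executed combining the stochastic domination by $\hat{\pr}_{p_{0}}$ with the freedom to shrink the closure $\delta$. The inductive dominance claim itself is deterministic once the open $\Psi$-path is fixed, but chaining envelopes across consecutive levels and ensuring no type-$1$ infection leaks in from outside requires careful use of \eqref{MSb} and \eqref{MSd}.
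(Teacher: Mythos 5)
Your deterministic ingredient (type-$2$ dominance propagated along open $\Psi$-paths via \eqref{MSb} and \eqref{MSd}, inside envelopes contained in $[1,\infty)$) is essentially the mechanism the paper uses, and identifying the event $C_n(N)$ of Lemma \ref{Lemak-pendentpercolation2}$(ii)$ as the percolation input is correct. But the probabilistic step you rely on is false: the event that \emph{every} column in $[m^{*}+N,m^{*}+2N]$ at level $n$ is reachable from $[m^{*},\infty)\times\{0\}$ does not have probability $1-Ce^{-cN}$. Each site $(m,n)$ is closed with some fixed positive probability (the conditions \eqref{MSa}--\eqref{MSd} genuinely fail with positive probability, and $\delta$ is fixed once $\epsilon$, $k$, $p_0$, $\hat N$ are chosen), and these failures are only $k$-dependent; so among the order-$N$ columns in your window, at least one is closed --- hence unreachable --- with probability tending to $1$ as $N\to\infty$. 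No Peierls argument can rescue a statement that is simply untrue, and you cannot shrink $\delta$ with $N$, since the renormalization parameters must be fixed before $M$, $C$, $c$ are. There is a second, independent problem in your concluding step: a type-$1$ particle at position $\ge MN$ does not ``lie in one of these forbidden boxes'' --- it sits strictly to the right of all of them --- so you would need to intercept its entire space--time ancestry back to time $0$, which a single row of boxes at one level $n$ (even if all were good) does not accomplish.

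The paper's proof avoids both issues by using the event $C_k(N)$ itself, suitably translated: with probability at least $1-\hat Ce^{-cN}$ there is a \emph{single} open path $\{(m_j,j)\}_{0\le j\le k}$ from $[i,\infty)\times\{0\}$ to $[i,N]\times\{k\}$ staying in $[i,\infty)$, with $i$ of order $\alpha\hat K\hat N$. The union $R_k$ of the renormalized boxes along this path is a connected space--time set of width larger than $R$ spanning all times up to $t$, so every Harris path ending at a site $y\ge N\hat N/2$ at time $t$ either crosses $R_k$ or stays entirely to its right; in either case \eqref{MSb}--\eqref{MSd} let one reroute it inside $\mathbb{Z}^{+}\times[0,\infty)$, forcing $\zeta^{\textbf{1},\textbf{2}}_t(y)=2$. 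Thus only one open path is needed, not a dense family of them, and the bound $Ce^{-cN}$ comes directly from Lemma \ref{Lemak-pendentpercolation2}$(ii)$. You should replace your ``uniform reachability of a column range'' step by this single-path barrier construction.
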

\begin{proof}
For $\epsilon>0$ we choose $k$, $\hat{K}$, $p_0$, $\delta$ and $\hat{N}$ as in \eqref{hat{K}}, \eqref{p_0}, \eqref{delta}, and  \eqref{hatN}, such that the law of $\Psi$ satisfies item $(ii)$ of  Lemma \ref{Lemak-pendentpercolation2}. Then, by the  translation invariance of the Mountford-Sweet renormalization, we have that
 \begin{align}\label{eqimp}
 \begin{split}
\pr\left(\Psi\in \left\lbrace\begin{array}{l}\text{ there exists a path connecting }[i,\infty)\times\{0\}\\ \text{ with } [i,N]\times\{k\},
 \text{ inside }([i,\infty)\times[0,\infty))\cap \Lambda \end{array}\right\rbrace^{c}\right)=\hat{\pr}(\{C_{k}(N-i)\}^{c})&\leq Ce^{-c(N-i)}
 \\&=\hat{C}e^{-cN},
 \end{split}
 \end{align}
for all $k$, where $i= \left \lceil 2 \alpha \hat{K} \hat{N}  \right\rceil+\left \lceil 2 \alpha \hat{K} \hat{N}  \right\rceil\mod 2$.

We take $t\geq \hat{K}\hat{N}$ and $k=k(t)=\lfloor t/\hat{K}\hat{N}\rfloor+1$. By \eqref{eqimp}  we have that, except for an event with probability smaller than $\hat{C}e^{-cN}$, there exists a sequence $\{m_j\}_{0\leq j\leq k}$ such that
\begin{align*}
&\Psi(m_j,j)=1 \hspace{0.2cm} \forall \hspace{0.2cm} j\hspace{0.2cm}\in \{0,\dots,k\},\\&
||m_{j+1}-m_{j}||=1,\quad
m_0\geq i, \quad i\leq m_k\leq N,\quad \text{ and }\quad m_j\geq i,\,  \forall \hspace{0.2cm} j\hspace{0.2cm}\in \{0,\dots,k\}.
\end{align*}
 We define the union of the renormalized boxes as 
 $$
 R_{k}=\underset{0\leq j \leq k}{\bigcup}\left(I^{\hat{K},\hat{N}}_{(m_j,j)}\cup J^{\hat{K},\hat{N}}_{(m_j,j)}\cup I^{\hat{K},\hat{N}}_{(m_j,j+1)}\right).
 $$
The set $R_k$ is connected and all the boxes have width larger than $R$. Hence, if a path begins to the left of $R_k$, ends in a point to the right of $R_k$ and has time coordinate smaller than $t$, then this path intersects $R_k$. Also, properties \eqref{MSb} and \eqref{MSc} of the Mountford-Sweet renormalization imply that in the trajectory of the contact process $t\mapsto \xi(t)(\mathcal{H})$, every point in $R_{k}$ that is connected   with $\mathbb{Z}\times \{0\}$  is connected with $I^{\hat{K},\hat{N}}_{(m_0,0)}$. Also, property \eqref{MSd} of the renormalization, our choice of $i$, and the fact that $m_j\geq i$, for all  $j$, imply that such points are connected with $I^{\hat{K},\hat{N}}_{(m_0,0)}$ by paths that are inside of $\{i\hat{N}/2-2 \alpha \hat{K}\hat{N}\} \times[0,\infty)\subset \mathbb{Z}^+ \times [0,\infty)$.
 \begin{figure}
			\begin{center}
		\begin{overpic}[scale=0.4,unit=1mm]{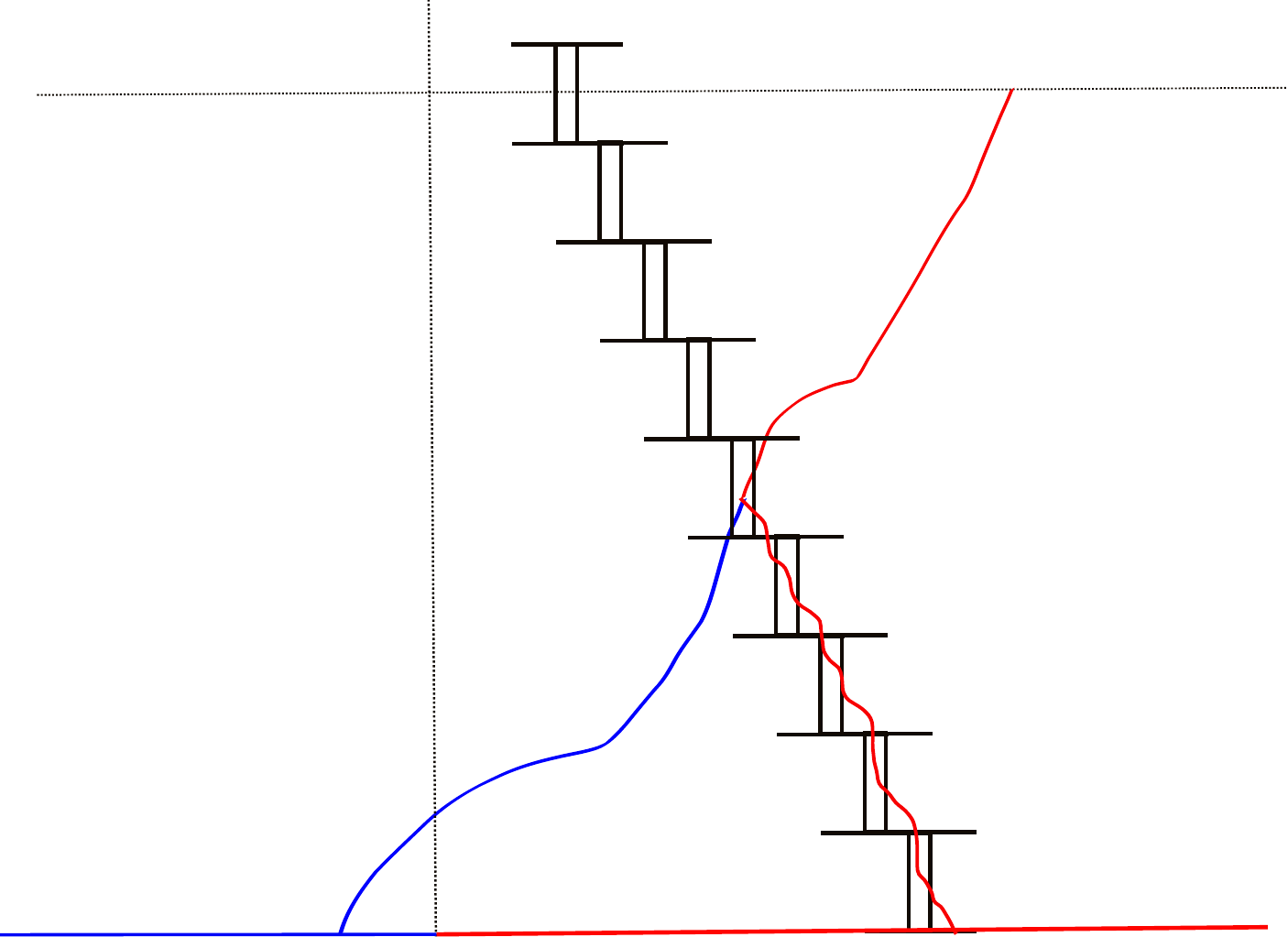}
		\put(28,38){\parbox{0.4\linewidth}{%
 \begin{tiny}
{$R_k$}
\end{tiny}}}
  \end{overpic}
  \end{center}
  \caption{The blue paths represent the particles of type $1$ and the red ones represent the particles of type $2$.}
		\end{figure}
		
 Observe that for any $y\in \xi^{\mathbb{Z}}_t\cap [N\hat{N}/2,\infty)$ and any path that connects  $\mathbb{Z}\times \{0\}$ with $(y,t)$ we have two possibilities: the path intersects $R_k$ or it stays forever to the right of $R_k$. In both cases, we can construct a path contained in $[i\hat{N}/2-2 \alpha \hat{K}\hat{N},\infty)\times[0,\infty)$ and consequently $\zeta^{\textbf{1},\textbf{2}}_t(y)=2$. Therefore, if a site in $ [N\hat{N}/2,\infty)$ is occupied at time $t$, then it is occupied by a particle of type $2$ and we conclude 
\begin{equation*}
\pr\left( \textbf{r}^1_t \geq N\hat{N}/2\right)\leq \hat{\pr}(\{C_k(N)\}^c) \leq \hat{C}e^{-cN},
\end{equation*}
for all $t\geq \hat{K}\hat{N}$ and all $N$.
\end{proof}
\begin{remark}\label{remulti}
For the case $R=1$ there is a simpler proof of Proposition \ref{tightness}. We denote by $\Xi^{\mathbb{N}}_t$ the contact process restricted to $\mathbb{N}$ and $\mu_{\mathbb{N}}$ the no trivial invariant measure of the contact process restricted to $\mathbb{N}$. Observe that
$$
\{r^{\textbf{1}}_t>N\}\subset \{\Xi^{\mathbb{N}}_t\cap [0,N]=\emptyset\}.
$$
By attractiveness we have that
$$
\pr(\Xi^{\mathbb{N}}_t\cap [0,N]=\emptyset)\leq \mu_{\mathbb{N}}(\Xi:\Xi\cap[0,N]=\emptyset).
$$
The result follows by Lemma $2.6$ in \cite{Andjel}.
\end{remark}
We denote by $\mathcal{A}$ the set of configurations in $\{0,1,2\}^{\mathbb{Z}}$ with infinite sites occupied, and for which there exists $K$ such that all the occupied sites to the right of $K$ are occupied by particles of type $2$, and all the occupied sites to the left of $-K$ are occupied by particles of type $1$. More precisely
\begin{equation}\label{defmathcalA} 
\mathcal{A}=\left\lbrace \begin{array}{l} \zeta \in \{0,1,2\}^{\mathbb{Z}}:||\textbf{l}^{2}(\zeta)||<\infty,\,||\textbf{r}^{1}(\zeta)||<\infty,\,|\{x:\zeta(x)\neq 0\}\cap[1,\infty)|=\infty,\\
\hspace{2,1cm}\text{ and }\,|\{x:\zeta(x)\neq 0\}\cap(-\infty,0]|=\infty\end{array}\right\rbrace.
\end{equation}
The following corollary is a consequence of  Proposition \ref{tightness}.
\begin{corollary}
There exists an invariant measure $\nu$ for the two-type contact process supported in the set of configurations $\mathcal{A}$.
\end{corollary}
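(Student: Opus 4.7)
The plan is to build $\nu$ as a subsequential Cesaro limit of $\mathcal{L}(\zeta^{\mathbf{1},\mathbf{2}}_t)$. Set $\mu_t:=\mathcal{L}(\zeta^{\mathbf{1},\mathbf{2}}_t)$ and define the Cesaro average $\nu_T:=T^{-1}\int_0^T\mu_t\,dt$. Since $\{0,1,2\}^{\mathbb{Z}}$ is compact in the product topology, the family $\{\nu_T\}_{T\geq 1}$ is tight, and Prokhorov's theorem yields a subsequence $T_n\to\infty$ with $\nu_{T_n}\to\nu$ weakly. The Feller property of the two-type contact process (inherited from the graphical construction) combined with the standard Krylov--Bogolyubov argument implies that $\nu$ is invariant: for any continuous $f$ and $s>0$ one checks $|\int P_sf\,d\nu_T-\int f\,d\nu_T|=O(s/T)$, and passing to the limit along $T_n$ gives $\nu P_s=\nu$.

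It remains to verify $\nu(\mathcal{A})=1$. The set $\{\zeta:\mathbf{r}^1(\zeta)\leq K\}=\{\zeta:\zeta(x)\neq 1\text{ for all }x>K\}$ is closed in the product topology. Proposition \ref{tightness} gives $\pr(\mathbf{r}^1_t\leq MN)\geq 1-Ce^{-cN}$ for every $t\geq\hat{N}\hat{K}$, so
\[
\nu_{T_n}(\mathbf{r}^1\leq MN)\;\geq\;1-Ce^{-cN}-O(T_n^{-1}).
\]
The Portmanteau theorem (using closedness) then yields $\nu(\mathbf{r}^1\leq MN)\geq 1-Ce^{-cN}$, and letting $N\to\infty$ gives $\nu(\mathbf{r}^1<\infty)=1$. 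The symmetry noted in the paragraph preceding Proposition \ref{tightness} transports the same exponential estimate to $-\mathbf{l}^2_t$, whence $\nu(\mathbf{l}^2>-\infty)=1$ by the analogous argument.

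For the two ``infinitely many particles'' conditions I would exploit the fact that ignoring types gives back the classic contact process. The projection $\pi:\{0,1,2\}^{\mathbb{Z}}\to\{0,1\}^{\mathbb{Z}}$ defined by $\pi(\zeta)(x)=\mathds{1}_{\{\zeta(x)\neq 0\}}$ is continuous, and the graphical construction shows $\pi(\zeta^{\mathbf{1},\mathbf{2}}_t)=\xi^{\mathbb{Z}}_t$ (the classic process started from $\mathbb{Z}$). Since $\mathcal{L}(\xi^{\mathbb{Z}}_t)\to\mu$ weakly as $t\to\infty$, the Cesaro averages also converge to $\mu$, and continuity of $\pi$ forces $\pi_*\nu=\mu$. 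Because $\mu$ is translation invariant with positive density, it is concentrated on configurations with infinitely many occupied sites in both $(-\infty,0]$ and $[1,\infty)$; consequently $\nu$-almost every $\zeta$ has the same property. Combining this with the two tightness statements proves $\nu(\mathcal{A})=1$.

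The main obstacle is step three: one has to avoid the trap of applying Portmanteau in the wrong direction (the event $\{\mathbf{r}^1<\infty\}$ is an $F_\sigma$ rather than closed set, which is why I reduce to the closed level sets $\{\mathbf{r}^1\leq K\}$), and one has to identify the marginal law of the occupied set under $\nu$ with the upper invariant measure $\mu$ in order to upgrade finite-sidedness of the types into full-sidedness of the occupied support. Once both pieces are in place the corollary follows immediately from the definition \eqref{defmathcalA} of $\mathcal{A}$.
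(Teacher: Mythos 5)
Your construction of $\nu$ (Cesaro averages of $\mathcal{L}(\zeta^{\mathbf{1},\mathbf{2}}_t)$, compactness of $\{0,1,2\}^{\mathbb{Z}}$, Krylov--Bogolyubov) is exactly the paper's, and your use of Proposition \ref{tightness} plus Portmanteau to control $\mathbf{r}^1$ and $\mathbf{l}^2$ under $\nu$ matches the paper's intent; in fact your version is cleaner, since you apply Portmanteau to the genuinely closed level sets $\{\mathbf{r}^1\le MN\}$ and then take an increasing union, whereas the paper applies it to $\mathcal{A}$ itself while asserting $\mathcal{A}$ is closed (it is not: the conditions $\|\mathbf{r}^1\|<\infty$ and ``infinitely many occupied sites'' are only $F_\sigma$, resp.\ $G_\delta$). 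Where you genuinely diverge is the ``infinitely many particles on each side'' step. The paper argues directly for each fixed $t$: the sites in $[MN,\infty)$ carrying no death mark up to time $t$ are a.s.\ infinite in number and remain occupied for $\xi^{\mathbb{Z}}_t$, which combined with the event $\{\mathbf{r}^1_t<MN,\ \mathbf{l}^2_t>-MN\}$ puts $\zeta^{\mathbf{1},\mathbf{2}}_t$ in $\mathcal{A}$ with probability at least $1-\epsilon$ uniformly in $t$. You instead push $\nu$ forward through the type-erasing projection $\pi$, identify $\pi_*\nu=\mu$, and import the infinitude from properties of the upper invariant measure. Both work; the paper's route is more elementary and self-contained, yours buys the stronger structural fact $\pi_*\nu=\mu$ for free. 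One point to tighten: ``translation invariant with positive density'' alone does not force infinitely many occupied sites a.s.\ (consider $\tfrac12\delta_{\emptyset}+\tfrac12\delta_{\mathbb{Z}}$); you need the ergodicity of $\mu$ under translations (standard for the upper invariant measure of the supercritical contact process) or, more simply, the paper's death-mark argument. You should also record explicitly the small observation (the paper's inclusion \eqref{mamayoquiero}, at the level of $\nu$) that on the intersection of your four almost-sure events the occupied sites to the left of $-MN$ are necessarily of type $1$ and those to the right of $MN$ of type $2$, so that $\mathbf{r}^1\ne-\infty$ and $\mathbf{l}^2\ne+\infty$ and the configuration indeed lies in $\mathcal{A}$ as defined in \eqref{defmathcalA}.
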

 \begin{proof}
 We consider the metric  space $(\{0,1,2\}^{\mathbb{Z}},\tilde{\rho})$, where the distance $\tilde{\rho}$ is defined by
 $$
 \tilde{\rho}(\zeta, \zeta')= \underset{x\in \mathbb{Z}}{\sum}\frac{||\zeta(x)-\zeta'(x)||}{2^{||x||}(1+||\zeta(x)-\zeta'(x)||)},
 $$
 for every $\zeta, \zeta' \in \{0,1,2\}^{\mathbb{Z}}$. In this space,  for any Cauchy sequence, the pointwise limit is also the limit in the metric $\tilde{\rho}$.  Hence, this metric space is complete. Moreover, by the definition of the metric, $\tilde{\rho}(\zeta,\zeta ')\leq 2$ for all $\zeta, \zeta'$. Therefore, $(\{0,1,2\}^{\mathbb{Z}},\tilde{\rho})$ is a compact metric space. 
 
 We denote by $\nu_t$ the law of $\zeta^{\textbf{1},\textbf{2}}_t$ and for $T\geq 0$ we define the measure
 $$
 \tilde{\nu}_T(A)= \frac{1}{T} \int^{T}_{0}\nu_t(A)dt,
 $$
for any Borel set $A$. Since the space is compact, $\{\tilde{\nu}_T\}_T$ is a tight family of probabilities. Let $\{\tilde{\nu}_{T_k}\}_k$ be a subsequence that converges  to a measure $\nu$. Using Proposition $1.8$ of Chapter $I$ in \cite{Liggett},  we have that $\nu$ is an invariant measure for the process.
 
 It remains to prove that the measure $\nu$ is supported in $\mathcal{A}$.
 For $\epsilon>0$, we take $MN$ as in Proposition \ref{tightness} such that for all $t\geq 0$ we have 
 \begin{equation}\label{eq13}
 \pr(\textbf{r}^1_t> MN) +\pr(\textbf{l}^2_{t}<-MN)\leq \frac{\epsilon}{2}.
\end{equation}
 Observe that 
  \begin{equation}\label{mamayoquiero}
  \left\lbrace\begin{array}{l}|\xi^{\mathbb{Z}}_t\cap(-\infty,-MN]|=\infty;|\xi^{\mathbb{Z}}_t\cap[MN,\infty)|=\infty;\\
  \textbf{r}^1_t<MN;\,\textbf{l}^2_t>-MN\end{array}\right\rbrace\subset \{\zeta^{\textbf{1},\textbf{2}}_t\in \mathcal{A}\}.
\end{equation}
 The event that there is no mark before time $t$ for a Poisson process of rate $1$ has probability $e^{-t}$. Since all the Poisson processes of death are independent, the smallest site $x\in[MN,\infty)$ for which there is no mark of death before time $t$ has geometric distribution with parameter of success  $e^{-t}$. Similarly, we have that the $n$-th smallest site in $[MN,\infty)$ for which there is no mark of death before time $t$ has negative binomial distribution with parameters $n$ and $e^{-t}$.  Consequently, with probability $1$ and for any $n$, there are at least $n$ sites in  $[MN, \infty)$ occupied at time $t$ by the process $\{\xi^{\mathbb{Z}}_t\}$. Further, since $n$ is arbitrary, with probability $1$, there are infinite sites in  $[MN, \infty)$ occupied at time $t$ by the process $\{\xi^{\mathbb{Z}}_t\}$. By the symmetry of the Harris graph, this argument is also valid for $(-\infty,-MN]$, and we conclude that
$$
\pr(|\xi^{\mathbb{Z}}_t\cap(-\infty,-MN]|=\infty;|\xi^{\mathbb{Z}}_t\cap[MN,\infty)|=\infty)=1.
$$
The equation above, \eqref{mamayoquiero} and \eqref{eq13} imply that
$$
\pr(\zeta^{\textbf{1},\textbf{2}}_t\in \mathcal{A})\geq 1-\epsilon,
$$
for all $t$ and, therefore, we have 
  \begin{equation}\label{majela}
  1-\epsilon\leq \tilde{\nu}_{T_k}(\mathcal{A}).
  \end{equation}
Since $\mathcal{A}$ is a closed set in $\{0,1,2\}^{\mathbb{Z}}$ and $\tilde{\nu}_{T_k}$ converges to $\nu$, \eqref{majela} implies that $\nu(\mathcal{A})\geq 1-\epsilon$ for $\epsilon$ arbitrary, which completes the proof. 
 \end{proof}

\subsection{Proof of Theorem \ref{teoconvergence}}\label{Proof of main Theorem}
Before the proof of Theorem \ref{teoconvergence}, we need to state several technical results. The most important is Proposition \ref{lallave}, which will be essential to obtain Theorem \ref{teoconvergence} and Theorem \ref{Teo2}.

We begin by introducing some notations. Let $M'$ be a positive number, we define for $k\geq 1$
\begin{equation}\label{A_k}
B_k=\left\lbrace \begin{array}{l}\omega: P^x(\omega)\cap[(k-1)\hat{K}\hat{N},k\hat{K}\hat{N}]\neq \emptyset;
P^{y\rightarrow x}(\omega)\cap[(k-1) \hat{K}\hat{N},k\hat{K}\hat{N}]=\emptyset;\\
\hspace{0.5cm}\forall \, x\in [-M',M'];|| x-y ||\leq R\end{array}\right\rbrace.
\end{equation}
In the event $B_k$, for every site in $[-M',M']$, there is at least one mark of death in the time interval $[(k-1) \hat{K}\hat{N},k\hat{K}\hat{N}]$, and there are no arrows coming from a site outside $[-M',M']$ to a site in $[-M',M']$ during this time interval. Therefore, in the event $B_k$,  there is no  point in  $[-M',M']\times\{k\hat{K}\hat{N}\}$ connected with $\mathbb{Z}\times\{0\}$ in the Harris graph.

During this section we take $\epsilon$ a positive arbitrary number. For  $\epsilon$ we take $\hat{K}$ and $\hat{N}$ as in \eqref{hat{K}} and \eqref{hatN}, respectively, and we define
\begin{align}\label{elv}
v=\max\{3,4\alpha\hat{K}\}\quad\text{ and }\quad M'= (v+2)\hat{N}.
\end{align}

With the quantities $v$ and $M'$ we define the stopping time
\begin{equation*}
    \textbf{X}_{k_0}(\omega)=\min\{k\hat{K}\hat{N}: \,k\geq k_0\text{ and }\omega\in B_k\},
\end{equation*}
where  $k_0\in \mathbb{N}$. The stopping time $\textbf{X}_{k_0}$ is defined such that  the rectangle $[-M',M']\times[\textbf{X}_{k_0}-\hat{K}\hat{N},\textbf{X}_{k_0}]$ has no arrows coming in or out and every site in $[-M',M']$ has a death mark. 

In the next lemma, we state a result for the Mountford-Sweet renormalization. To do this, we need to define the following set in oriented percolation 

\begin{equation*}
\mathcal{R}(\tilde{\Gamma}_k)(i)=\left\lbrace \begin{array}{c}
 \text{there exists a path connecting }(-\infty,-k/2 -i]\times\{0\}\cap \Lambda \rightsquigarrow (-\imath ,k)\text{ and}\\
\text{this path does not intersect the set }\{(m,s)\in \Lambda:-k/2+s/2-i \leq m\}
\end{array}\right\rbrace,
\end{equation*}
where $\imath=i+2$ if $i+k$ is even and $\imath=i+1$ otherwise. Reflecting in the axis of time a path in the event $\mathcal{R}(\tilde{\Gamma}_k)(i)$, we get a path in the event $\tilde{\Gamma}_k(i)$ defined in \eqref{unGammadeperco}, and vice-versa.  Since the law of the Harris graph is invariant under reflections in the axis of time, each of these paths has the same probability under the law of $\Psi$. Hence, the events $\{\Psi\in \mathcal{R}(\tilde{\Gamma}_k)(i)\}$ and $\{\Psi\in\tilde{\Gamma}_k(i)\}$ have equal probability.   
% By Lemma \ref{Lemmak-dependentpercolation} we have that 
%  \begin{equation}\label{eqimpX}
% \begin{small}
%\pr\left(\Psi^{\tiny{(0,1)}} \in \tilde{\Gamma}(a)\cap \mathcal{R}(\tilde{\Gamma}_k)(a) \right)> 1-2\epsilon,
%\end{small}
% \end{equation}
% where. 
 \begin{lemma}\label{otroauxiliar} For $\epsilon>0$  we have that
 \begin{equation}\label{majorev0}
 \pr\left(\Psi\in \tilde{\Gamma}_{\textbf{x}-2}(a)\cap \mathcal{R}(\tilde{\Gamma}_{\textbf{x}-2})(a)\right)\geq1-2\epsilon,
 \end{equation}
 where $\textbf{x}=\frac{\textbf{X}_{k_0}}{\hat{K}\hat{N}}$ and $a=\left\lfloor v\right\rfloor$.
 \end{lemma}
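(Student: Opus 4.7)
The plan is a union bound
$$\pr\!\bigl(\Psi\notin\tilde{\Gamma}_{\textbf{x}-2}(a)\cap\mathcal{R}(\tilde{\Gamma}_{\textbf{x}-2})(a)\bigr)\leq\pr(\Psi\notin\tilde{\Gamma}_{\textbf{x}-2}(a))+\pr(\Psi\notin\mathcal{R}(\tilde{\Gamma}_{\textbf{x}-2})(a)),$$
and then to bound each summand by $\epsilon$. By the time-reflection symmetry of the Harris graph recorded in the paragraph just before the lemma statement, the two summands are treated identically, so I focus on the first.

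I would decompose by the value of the stopping time,
$$\pr(\Psi\notin\tilde{\Gamma}_{\textbf{x}-2}(a))=\sum_{k\geq k_0}\pr(\textbf{x}=k,\,\Psi\notin\tilde{\Gamma}_{k-2}(a)),$$
and combine two ingredients. First, by translation invariance of $\Psi$ (Proposition~\ref{Mountford-Sweet}) together with Lemma~\ref{Lemak-pendentpercolation2}(i), one has $\pr(\Psi\notin\tilde{\Gamma}_{n}(a))\leq\epsilon$ for every $n\geq 0$. Second, the event $B_k$ is measurable with respect to the Poisson marks in the slab $[(k-1)\hat{K}\hat{N},k\hat{K}\hat{N}]$, whereas both $\{\Psi\in\tilde{\Gamma}_{k-2}(a)\}$ and $\{B_{k_0}^{c}\cap\cdots\cap B_{k-1}^{c}\}$ are measurable with respect to the marks in the earlier slab $[0,(k-1)\hat{K}\hat{N}]$. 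Hence $B_k$ is independent of the rest, the $B_k$'s are i.i.d.\ with $\pr(B_k)>0$ (so in particular $\pr(\textbf{x}<\infty)=1$), and factoring out $\pr(B_k)$ yields
$$\pr(\textbf{x}=k,\,\Psi\notin\tilde{\Gamma}_{k-2}(a))=\pr(B_k)\cdot\pr\!\bigl(\Psi\notin\tilde{\Gamma}_{k-2}(a),\,B_{k_0}^{c}\cap\cdots\cap B_{k-1}^{c}\bigr).$$

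The crux is to bound the joint probability on the right by $\epsilon\cdot\pr(B_{k_0}^{c})^{k-k_0}=\epsilon\cdot\pr(B_{k_0}^{c}\cap\cdots\cap B_{k-1}^{c})$, because summing would then give $\sum_k\pr(B_k)\,\epsilon\,\pr(B_{k_0}^{c})^{k-k_0}=\epsilon\sum_k\pr(\textbf{x}=k)=\epsilon$. Equivalently one wants the conditional estimate $\pr(\Psi\notin\tilde{\Gamma}_{k-2}(a)\mid B_{k_0}^{c}\cap\cdots\cap B_{k-1}^{c})\leq\epsilon$. The delicate point is that $\Psi(m,j)$ depends on the Harris marks in an envelope that can reach into the strip $[-M',M']$ where the $B_j$'s are defined, so the two families are not strictly independent. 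My approach would be to replace $\tilde{\Gamma}_n(a)$ by the stronger subevent of paths whose $m$-coordinate exceeds some $a^{*}>3v+4$ at every height: such paths' envelopes sit entirely to the right of $M'$, which makes the subevent genuinely independent of the $B_j^{c}$'s, while by translation invariance of $\Psi$ the subevent still has probability at least $1-\epsilon$ by Lemma~\ref{Lemak-pendentpercolation2}(i). Passing from this stronger, decoupled subevent back to $\tilde{\Gamma}_n(a)$ with its fixed top endpoint $(\imath,n)$, $\imath\in\{a+1,a+2\}$, is what I expect to be the principal technical obstacle; it should be achievable by extending paths a bounded number of steps past time $n$ and using elementary connectivity in $\Lambda$, but verifying the slope constraints rigorously is where the argument needs care.
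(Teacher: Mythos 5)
Your overall architecture matches the paper's up to the decisive step: decompose over the value of the stopping time, use that $B_k$ depends only on the Poisson marks in the slab $[(k-1)\hat K\hat N,k\hat K\hat N]$ and is therefore independent of $\{\Psi\in\tilde\Gamma_{k-2}(a)\}$ and of $B_{k_0}^c\cap\cdots\cap B_{k-1}^c$, factor out $\pr(B_k)$, and sum a geometric series. But the step you yourself flag as the ``principal technical obstacle'' is exactly the step you have not proved, and your proposed repair does not work. The bound you need, $\pr\bigl(\Psi\notin\tilde\Gamma_{k-2}(a);\,\cap_{j=k_0}^{k-1}B_j^c\bigr)\le\epsilon\,\pr\bigl(\cap_{j=k_0}^{k-1}B_j^c\bigr)$, cannot be obtained by carving out a subevent of $\tilde\Gamma_{k-2}(a)$ that is independent of the $B_j$'s: the event $\tilde\Gamma_n(a)$ requires $\Psi(\imath,n)=1$ at the fixed endpoint $\imath\in\{a+1,a+2\}$ with $a=\lfloor v\rfloor$, and the renormalized box $(\imath,n)$ sits at spatial location $\approx(v+2)\hat N/2=M'/2$, squarely inside the strip $[-M',M']$ on which $B_{n+1}$ is defined. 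So every subevent of $\tilde\Gamma_{k-2}(a)$ retains a genuine dependence on $B_{k-1}^c$, and no choice of $a^*$ decouples it.

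The missing idea is the FKG inequality. In the Harris graph, $B_j$ asks for death marks to be present and for certain arrows to be absent, so $B_j^c$ is an increasing event, hence so is $\cap_{j=k_0}^{k-1}B_j^c$; and $\{\Psi\in\tilde\Gamma_{k-2}(a)\cap\mathcal R(\tilde\Gamma_{k-2})(a)\}$ is increasing as well. FKG then gives
\begin{equation*}
\pr\Bigl(\Psi\in\tilde\Gamma_{k-2}(a)\cap\mathcal R(\tilde\Gamma_{k-2})(a);\ \textstyle\cap_{j=k_0}^{k-1}B_j^c\Bigr)\ \ge\ \pr\Bigl(\textstyle\cap_{j=k_0}^{k-1}B_j^c\Bigr)\,\pr\Bigl(\Psi\in\tilde\Gamma_{k-2}(a)\cap\mathcal R(\tilde\Gamma_{k-2})(a)\Bigr)\ \ge\ \pr\Bigl(\textstyle\cap_{j=k_0}^{k-1}B_j^c\Bigr)(1-2\epsilon),
\end{equation*}
using Lemma \ref{Lemak-pendentpercolation2}(i) together with the time-reflection identity for $\mathcal R(\tilde\Gamma_n)(a)$; summing against $\pr(B_k)$ over $k\ge k_0$ then yields $1-2\epsilon$. (Equivalently, in your complementary formulation, $\{\Psi\notin\tilde\Gamma_{k-2}(a)\}$ is decreasing and $\cap_j B_j^c$ is increasing, so FKG gives precisely the negative correlation $\pr(\cdot\,;\cap_j B_j^c)\le\pr(\cdot)\pr(\cap_j B_j^c)$ that your union-bound version requires.) Without this correlation inequality, or some substitute for it, your argument does not close.
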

 \begin{proof}
  Let $B_k$ be as in \eqref{A_k}. Since $X_{k_0}$ is finite almost surely we have
%\begin{align*}
%&\pr(<\infty)= \sum\limits^{\infty}_{k=k_0} \pr(\textbf{X}_{k_0}=k\hat{K}\hat{N})=\pr(B_{k_0})+\sum\limits^{\infty}_{k=k_0+1}\pr(B_k\cap^{k-1} _{j=k_0}B^c_j)\\
%&=\pr(B_{1})+\sum\limits^{\infty}_{k=k_0+1}\pr(B_k)\pr(\cap^{k-1} _{j=k_0}B^c_j)=\pr(B_{1})+\sum\limits^{\infty}_{k=k_0+1}\pr(B_1)\pr(B^c_1)^{k-k_0}\\
%&=\pr(B_1)\left(\sum\limits^{\infty}_{k=0}(1-\pr(B_1))^{k}\right)=1,
%\end{align*}  
\begin{figure}[h]
\begin{center}
  \begin{overpic}[scale=0.4,unit=1mm]{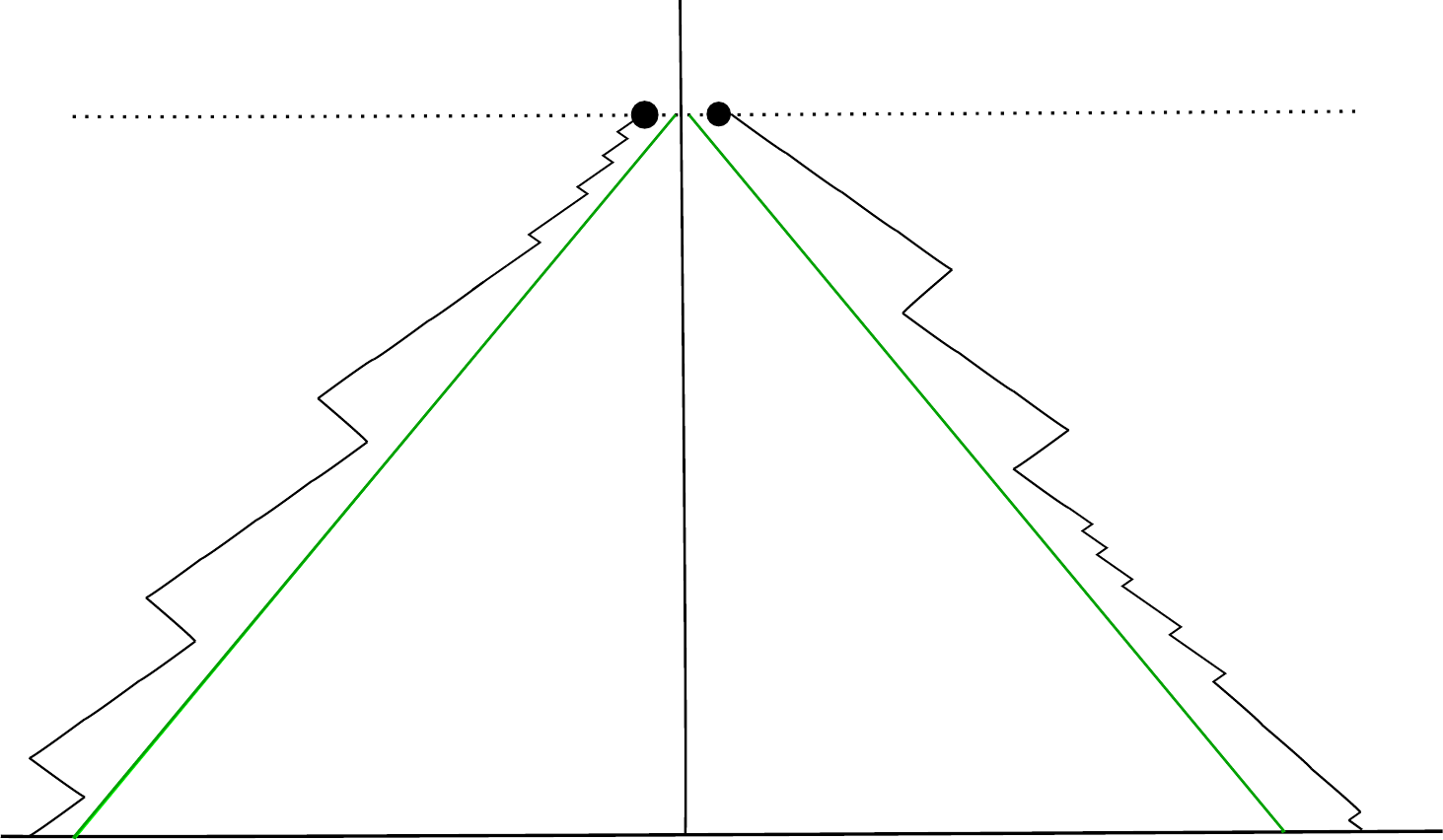}
   % \begin{overpic}[grid,scale=0.8,unit=1mm]{MetaR.pdf}
   \put(30,31){\parbox{0.4\linewidth}{
\begin{tiny}
$\imath$
\end{tiny}}}
\put(24,31){\parbox{0.4\linewidth}{
\begin{tiny}
$-\imath$
\end{tiny}}}
\put(-2,29){\parbox{0.4\linewidth}{
\begin{tiny}
$\textbf{x}-2$
\end{tiny}}}
\end{overpic}
\caption{ Representation of the event inside the probability in \eqref{majorev0}. The green lines in the figure have equations $x=-\frac{\textbf{x}-2}{2}+ \frac{y}{2}-a$ and $x=\frac{\textbf{x}-2}{2}- \frac{y}{2}+a$, respectively. The constant $\imath$ depends on $a$ and $\textbf{x}-2$, as in the definition of the event $\mathcal{R}(\tilde{\Gamma}_{\textbf{x}-2})(a)$.}\label{Figura0}
\end{center}
\end{figure}
\begin{align*}
     &\pr(\Psi\in \tilde{\Gamma}_{\textbf{x}-2}(a)\cap \mathcal{R}(\tilde{\Gamma}_{\textbf{x}-2})(a) )=\sum\limits^{\infty}_{k=k_0} \pr(\Psi \in \tilde{\Gamma}_{k-2}(a)\cap \mathcal{R}(\tilde{\Gamma}_{k-2})(a);\textbf{X}_{k_0}=k\hat{K}\hat{N})\\
     &= \pr(\Psi\in \tilde{\Gamma}_{k_0-2}(a)\cap \mathcal{R}(\tilde{\Gamma}_{k_0-2})(a);B_{k_0})+\sum\limits^{\infty}_{k=k_0+1}\pr(\Psi\in \tilde{\Gamma}_{k-2}(a)\cap \mathcal{R}(\tilde{\Gamma}_{k-2})(a);B_k\cap^{k-1} _{j=k_0}B^c_j).
\end{align*}
%First, we deal with the second sum in \eqref{sum*}. Observe that by our choice of $M'$ we have that
%By the definition of $A_n$ there are no arrows in $[-M',M']\times[(n-1)\hat{K}\hat{N}, n\hat{K}\hat{N}]$, then we have that
%\begin{align}\label{inclusion**}
%\begin{split}
%&\{\textbf{r}^1(\zeta^{\zeta_0}_{n\hat{K}\hat{N}})\leq M'; \textbf{l}^2(\zeta^{\zeta_0}_{n\hat{K}\hat{N}})\geq -M';A_n\cap^{n-1} _{k=k_0}A^c_k\}\\
%&\quad \supset \{\textbf{r}^1(\zeta^{\zeta_0}_{(n-1)\hat{K}\hat{N}})\leq M'; \textbf{l}^2(\zeta^{\zeta_0}_{(n-1)\hat{K}\hat{N}})\geq -M';A_n\cap^{n-1} _{k=k_0}A^c_k\}.
%\end{split}
%\end{align}
%Also, observe that 
%\begin{align}\label{inclusion*}
%\begin{split}
%&\{\textbf{r}^1(\zeta^{\zeta_0}_{(n-1)\hat{K}\hat{N}})\leq M'; \textbf{l}^2(\zeta^{\zeta_0}_{(n-1)\hat{K}\hat{N}})\geq -M';A_n\cap^{n-1} _{k=k_0}A^c_k\}\\
%&\quad \supset\{\Psi \in \tilde{\Gamma}_{n-1}+\jmath\text;\Psi \in \mathcal{R}(\tilde{\Gamma}_{n-1})-\jmath; A_n\cap^{n-1} _{k=k_0}A^c_k\},
%\end{split}
%\end{align}
The event $B_k$ is independent of any event that depends on the Harris graph until time $(k-1)\hat{K}\hat{N}$, therefore, we have 
 \begin{align}\label{otracosamasquenosequeponerle}
 \begin{split}
& \pr(\Psi\in \tilde{\Gamma}_{k-2}(a)\cap \mathcal{R}(\tilde{\Gamma}_{k-2})(a);B_k\cap^{k-1} _{j=k_0}B^c_j)=\pr(B_k)\pr(\Psi \in \tilde{\Gamma}_{k-2}(a)\cap \mathcal{R}(\tilde{\Gamma}_{k-2})(a);\cap^{k-1} _{j=k_0}B^c_j).
 \end{split}
 \end{align}
Moreover, the event  $\cap^{k-1} _{j=k_0}B^c_j$ is increasing\footnote{For the definition of an increasing event in the Harris graph  see in  page 248, \cite{olivieri2005large}.}. Also, the event in \eqref{otracosamasquenosequeponerle} that depends on $\Psi$  is increasing. These observations, \eqref{otracosamasquenosequeponerle}, and the FKG inequality imply that\begin{align}\label{algoaquiaux}
 \begin{split}
&\pr(\Psi\in \tilde{\Gamma}_{k-2}(a)\cap \mathcal{R}(\tilde{\Gamma}_{k-2})(a);B_k\cap^{k-1} _{j=k_0}B^c_j)\\&\geq \pr(B_k)\pr(\cap^{k-1} _{j=k_0}B^c_j)\pr(\Psi \in \tilde{\Gamma}_{k-2}(a)\cap \mathcal{R}(\tilde{\Gamma}_{k-2})(a))\\
&\geq \pr(B_k)\pr(\cap^{k-1} _{j=k_0}B^c_j)(1-2\epsilon),
 \end{split}
 \end{align}
where the second inequality in \eqref{algoaquiaux} is a consequence of the fact that the events $\{\Psi\in \mathcal{R}(\tilde{\Gamma}_{k})(i) \}$ and $\{\Psi\in \tilde{\Gamma}_{k}(i) \}$ have the same probability (see the comments above the statement of this lemma) and item $(i)$ of Lemma \ref{Lemak-pendentpercolation2}.  Therefore, we conclude that 
\begin{align*}
  \pr(\Psi \in \tilde{\Gamma}_{\textbf{x}-2}(a)\cap \mathcal{R}(\tilde{\Gamma}_{\textbf{x}-2})(a))
&\geq \pr(B_{k_0})(1-2\epsilon)+ \sum\limits^{\infty}_{k=k_0+1}\pr(B_k)\pr(\cap^{k-1} _{j=k_0}B^c_j)(1-2\epsilon)\\
     &=(1-2\epsilon)\pr(B_1) \sum\limits^{\infty}_{k=k_0}(1-\pr(B_1))^{k-k_0}=1-2\epsilon
\end{align*}
and the proof of the lemma is complete.

 \end{proof}
Before establishing the next proposition, we define the following processes    
$$
\eta^{A,B}_t=\{x:\zeta^{A,B}_t(x)=1\} \quad\text{ and }\quad  \chi^{A,B}_t=\{y:\zeta^{A,B}_t(y)=2\},
$$
where $A$ and $B$ are disjoint subsets of $\mathbb{Z}$. We observe that the random sets $\eta^{A,B}_t$ and $\chi^{A,B}_t$ are the sets of sites that are occupied, respectively, by  particles of type $1$ and $2$ at time $t$, where the initial configuration is $\mathds{1}_{A}+2\mathds{1}_{B}$. Also, we define the events 
\begin{align}\label{losD}
\begin{split}
&\chi=\chi(A,B)=\{\chi^{A,B}_t\cap [1,\infty)\neq \emptyset\text{ i.o}\}\quad\text{ and }\quad \eta=\eta(A,B)=\{ \eta^{A,B}_s\cap (-\infty,0]\neq \emptyset\text{ i.o}\},
\end{split}
\end{align}
 and we set $D=D(A,B)=\chi\cap \eta$.
\begin{proposition}\label{lallave}
Let $A$, $B$ be two  disjoint sets of $\mathbb{Z}$. For every finite set $E$, we have
\begin{align*}
\pr\left(D(A,B)\cap\{\exists \,\textbf{t}:\,\zeta^{A,B}_s\equiv \zeta^{\textbf{1},\textbf{2}}_s \text{ in } E,\, \forall\, s\geq \textbf{t}\}^c\right)=0.
\end{align*} 
\end{proposition}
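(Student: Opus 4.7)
The plan is to use the clean-slate stopping time $\textbf{X}_{k_0}$ together with the percolation barriers provided by Lemma \ref{otroauxiliar} to force coupling on $E$ with positive probability per attempt, and then iterate via the strong Markov property and Borel--Cantelli to upgrade to ``almost-sure eventual success'' on $D(A,B)$.

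First, I would fix $\epsilon>0$ small and enlarge $\hat{N}$ in \eqref{elv} so that the given finite set $E$ is contained in $[-M',M']$. By the very definition of the events $B_k$ that enter into $\textbf{X}_{k_0}$, at time $\textbf{X}_{k_0}$ every site of $[-M',M']$ has been subjected to a death mark and no arrow has entered or left $[-M',M']$ during $[\textbf{X}_{k_0}-\hat{K}\hat{N},\textbf{X}_{k_0}]$; therefore both $\zeta^{A,B}_{\textbf{X}_{k_0}}$ and $\zeta^{\textbf{1},\textbf{2}}_{\textbf{X}_{k_0}}$ vanish identically on $[-M',M']$, and in particular they agree on $E$ at this instant.

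Second, I would lift the percolation paths in $\tilde{\Gamma}_{\textbf{x}-2}(a)\cap \mathcal{R}(\tilde{\Gamma}_{\textbf{x}-2})(a)$ to the Harris construction and combine them with a forward-in-time copy of Lemma \ref{otroauxiliar} applied past $\textbf{X}_{k_0}$ (via the strong Markov property and the translation invariance of the Mountford--Sweet renormalization $\Psi$). Together with the expanding descendancies of Proposition \ref{AMPV} and Corollary \ref{CorollaryAMPV}, these paths enclose $E\times[\textbf{X}_{k_0},\infty)$ inside a cone-shaped envelope of good renormalized boxes. The renormalization properties \eqref{MSb}--\eqref{MSd} then force every Harris path terminating at $(x,s)$ with $x\in E$ and $s\geq\textbf{X}_{k_0}$ to traverse the envelope through one of the boundary boxes on its left wall (originating in the far-left priority region of type $1$ particles) or its right wall (originating in the far-right priority region of type $2$ particles).

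Third, I would combine the envelope with the event $D(A,B)$. Since $D(A,B)=\chi\cap\eta$ forces type $1$ particles to be present in $(-\infty,0]$ and type $2$ particles to be present in $[1,\infty)$ at arbitrarily late times, by choosing $k_0$ large enough and conditioning on the process up to $\textbf{X}_{k_0}-\hat{K}\hat{N}$, with conditional probability bounded below by a positive constant $c=c(\epsilon)$, the only particles feeding into the envelope from the left are of type $1$ and those feeding from the right are of type $2$, for both processes simultaneously. Under the priority flip rules this identity of input types at the boundary of the envelope propagates inward and yields $\zeta^{A,B}_s\equiv \zeta^{\textbf{1},\textbf{2}}_s$ on $E$ for every $s\geq\textbf{X}_{k_0}$ on this event. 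Applying the strong Markov property to a sequence $k_0<k_1<k_2<\cdots$ produces infinitely many essentially independent attempts, each succeeding with conditional probability at least $c$; a Borel--Cantelli argument then implies that almost surely on $D(A,B)$ at least one attempt succeeds, which is exactly the statement of the proposition.

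The main obstacle is the third step: turning the geometric barrier picture into an honest coupling identity for the two-type process. Even though both processes are driven by the same Harris construction, their states outside the envelope may disagree indefinitely, so one must argue that only the types present at those few sites where the two-way percolation ancestors cross the envelope's boundary are relevant, and that on the left (respectively right) wall those types are forced to equal $1$ (respectively $2$) for \emph{both} processes. This is where the priority rules $c(x,\zeta,2\to 1)$ and $c(x,\zeta,1\to 2)$ interact with the Mountford--Sweet renormalization most delicately, and where the bulk of the technical work will lie.
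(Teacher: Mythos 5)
Your geometric picture (a barrier built from backward percolation paths as in Lemma \ref{otroauxiliar}, forward ``expanding'' descendancies, and the clean rectangle $[-M',M']\times[\textbf{X}_{k_0}-\hat{K}\hat{N},\textbf{X}_{k_0}]$ blocking the middle, so that every Harris path into the protected cone must cross a wall on which both processes carry the same type) is exactly the paper's. But there are two genuine gaps. First, you explicitly defer the heart of the matter: showing that on the left (resp.\ right) wall the occupied sites are of type $1$ (resp.\ $2$) \emph{for both processes}. This is not a technicality to be filled in later; it is the content of the proposition. The paper proves it by a concrete two-sided argument: for $\zeta^{A,B}$, every connected point of the right-wall boxes is connected, inside $[0,\infty)$ (the type-$2$ priority region), to the expanding point $(x,t)$, which event \eqref{primerevento} guarantees is occupied by a type-$2$ particle of $\zeta^{A,B}$; for $\zeta^{\textbf{1},\textbf{2}}$, the same points are connected via the $\hat\gamma$-boxes, again inside $[0,\infty)$, to $[\hat{N}\{a+(\textbf{x}-2)/2\},\infty)\times\{0\}$, where the initial configuration of $\zeta^{\textbf{1},\textbf{2}}$ is identically $2$. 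Your proposal asserts the conclusion ``the input types at the boundary are forced'' without either half of this argument, and your step in which both processes are empty on $[-M',M']$ at time $\textbf{X}_{k_0}$ does not help: agreement at one instant on a finite window does not propagate, since new particles of a priori different types flow in from outside afterwards.

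Second, your probabilistic upgrade is wrong as stated. You want each ``attempt'' at a clean-slate time $k_j$ to succeed with conditional probability at least $c>0$ and then invoke Borel--Cantelli. But the success event of an attempt is not $\mathcal{F}_{k_j\hat K\hat N}$-measurable, nor measurable with respect to any finite time: it requires the expanding points to have infinite descendancy and the percolation walls to persist forever. Successive attempts are therefore strongly dependent tail events, and neither the independent nor the conditional (L\'evy) form of Borel--Cantelli applies without substantial additional work; you would also need the lower bound $c$ to hold conditionally on the past \emph{and} on $D(A,B)$, which is itself a tail event, making the argument circular. The paper avoids iteration entirely: it fixes $\epsilon>0$, builds one event $V$ with $\pr(D\cap V^c)\le 4\epsilon$ (using Claim \ref{+cacharra}, which shows that on $\chi$ one almost surely finds, at some finite time, at least $N$ type-$2$ particles in $[M',\infty)$ simultaneously, whence Corollary \ref{CorollaryAMPV} produces an expanding point up to probability $\hat\epsilon$), proves deterministically that $V$ implies eventual agreement on $E$, and lets $\epsilon\to 0$. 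If you want to salvage your scheme, you should replace the Borel--Cantelli step by this single-event $\epsilon$-squeeze.
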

\begin{proof}
During the proof, $\epsilon$ is an arbitrary positive number. For this $\epsilon$, we choose $M'$ as in \eqref{elv}. 
% Moreover,  to simplify notation we define the events 
%$$D_1=\{\chi^{A,B}_t\cap [0,\infty)\neq \emptyset\text{ i.o}\}\text{ and } D_2=\{ \eta^{A,B}_s\cap (-\infty,1]\neq \emptyset\text{ i.o}\},$$
% and we set $D=D_1\cap D_2$.
 
We begin by defining the following event  
%The sketch of the proof is the following: we take $\epsilon$, positive and arbitrary and we choose a suitable $M>0$ such that, except for an event of probability less than $\epsilon$, all the configurations in the event $D$  are also in
 \begin{align}
 V=& \left\lbrace\begin{array}{l}\chi;\exists \, x \text{ and }t, \text{ such that }x\in\chi^{A,B}_{t}\cap [M',M],\\
0<t\leq M, \text{ and } (x,t)\text{ is expanding to the right}\end{array}\right\rbrace \label{primerevento}\\
 &\cap\left\lbrace\begin{array}{l}\eta; \exists \, y\text{ and }s, \text{ such that }x\in \eta^{A,B}_{s}\cap [-M,-M'],\\
0<s\leq M \text{ and }(y,s)\text{ is expanding to the left}\end{array}\right \rbrace\label{segundoevento}\\
 &\cap \{\Psi\in \tilde{\Gamma}_{\textbf{x}-2}(a)\cap \mathcal{R}(\tilde{\Gamma}_{\textbf{x}-2})(a)\},\label{tercerevento}
% &\cap \{\tilde{\xi}^{[-M,M]}_t\in[-\tilde{M}, \tilde{M}]\, \forall\, t\in [0,M]\}\label{cuartoevento}.
 \end{align}
where $M$ is a constant that will  be defined below and the random variable $\textbf{x}=\textbf{x}(k_0)$ is as in  Lemma \ref{otroauxiliar} with $k_0=\lceil 2 M/\hat{N}+M/\hat{K}\hat{N}\rceil$.
 
First, we will prove that we can take $M$ such that the probability of $V$ is close to the probability of $D$. Then, we will prove that for all configurations in $V$, we have that  $\zeta^{A,B}_t\equiv \zeta^{\textbf{1},\textbf{2}}_t$ in $E$, for all $t$ large enough. With these two ingredients it  will be easier to conclude the  proposition.  

Now, we take $M$ larger than $M'$ and satisfying that 
\begin{align}\label{++cacharra}
\begin{split}
0\leq &\pr\left(\left\lbrace\begin{array}{l}\chi;\exists \, x \text{ and }t, \text{ such that }x\in\chi^{A,B}_{t}\cap [M',\infty),\\
t>0, \text{ and } (x,t)\text{ is expanding to the right}\end{array}\right\rbrace\right)\\
&-\pr\left(\left\lbrace\begin{array}{l}\chi;\exists \, x \text{ and }t, \text{ such that }x\in\chi^{A,B}_{t}\cap [M',M],\\
0<t\leq M, \text{ and } (x,t)\text{ is expanding to the right}\end{array}\right\rbrace\right)\leq \epsilon
\end{split}
\end{align}
and 
\begin{align}\label{morecacharra}
\begin{split}
0\leq &\pr\left(\left\lbrace\begin{array}{l}\eta; \exists \, y\text{ and }s, \text{ such that }x\in \eta^{A,B}_{s}\cap [-\infty,-M'],\\
s>0 \text{ and }(y,s)\text{ is expanding to the left}\end{array}\right \rbrace\right)\\
&-\pr\left(\left\lbrace\begin{array}{l}\eta; \exists \, y\text{ and }s, \text{ such that }x\in \eta^{A,B}_{s}\cap [-M,-M'],\\
0<s\leq M \text{ and }(y,s)\text{ is expanding to the left}\end{array}\right \rbrace\right)\leq \epsilon.
\end{split}
\end{align}
We claim that 
\begin{claim}\label{+cacharra}
\begin{equation}\label{semeacabaronloslabel0}
\pr(\chi)=\pr\left(\left\lbrace\begin{array}{l}\chi;\exists \, x \text{ and }t, \text{ such that }x\in\chi^{A,B}_{t}\cap [M',\infty),\\
t>0, \text{ and } (x,t)\text{ is expanding to the right}\end{array}\right\rbrace\right),
\end{equation}
and
\begin{align}\label{semeacabaronloslabel}
\begin{split}
&\pr(\eta)=\pr\left(\left\lbrace\begin{array}{l}\eta; \exists \, y\text{ and }s, \text{ such that }x\in \eta^{A,B}_{s}\cap (-\infty,-M'],\\
s>0 \text{ and }(y,s)\text{ is expanding to the left}\end{array}\right \rbrace\right).
\end{split}
\end{align}

\end{claim}
\begin{proof}[Proof of the claim]

 As in Remark \ref{remulti}, we  denote by $\Xi^{C}_t$ the classic contact process restricted to $\mathbb{N}$ with initial configuration $C$, a subset of $\mathbb{N}$. Also, we denote by $\textbf{T}^{C}$ the time of extinction of the process $\Xi^{C}_t$. In \cite{Andjel}, it is proved in the nearest neighbor scenario that the classic contact process and the classic contact process restricted to $\mathbb{N}$ have the same critical rate of infection. For the case $R\geq 2$ this is also valid and it can be proved  using Corollary \ref{CorollaryAMPV} for $A=[4\alpha\hat{N}\hat{K},\infty)$. Therefore, if we take $\lambda>\lambda_c$, we have
\begin{equation*}
\pr(\textbf{T}^{\{1\}}=\infty)=\rho^{+}>0.
\end{equation*}
For $\hat{\epsilon}>0$, we take $N=N(\hat{\epsilon},M')$ satisfying \eqref{elncito} for $\hat{\epsilon}$   and  larger than $M'$.
Since equation \eqref{sextaequacaoahi} is also valid for the process $\Xi^{C}_t$ and $\textbf{T}^{C}$, for this $N$ we take $\textsf{t}$ such that
\begin{equation}\label{algoaqui}
\pr(\textbf{T}^{C}\geq \textsf{t};|\Xi^{C}_{\textsf{t}}|\geq 2 N)\geq \frac{\rho^{+}}{2},
\end{equation}
for every $C$ subset of $\mathbb{N}$.

Next, we define the following stopping time
\begin{align*}
t_1=\inf\{t>\textsf{t}:\,\chi^{A,B}_t\cap \mathbb{N}\neq \emptyset\},
\end{align*}
and inductively, for $i\geq 2$, we define $t_i$ as follows
\begin{align*}
t_i=\inf\{t>t_{i-1}+\textsf{t} :\chi^{A,B}_t\cap \mathbb{N}\neq \emptyset\}.
\end{align*}
Since the particles of type $2$ restricted to $[1,\infty)$ behave like the classic contact process, using the strong Markov property and \eqref{algoaqui} we obtain
\begin{equation}\label{maje}
\pr(|\chi^{A,B}_{t_i+\textsf{t}}\cap \mathbb{N}|<2 N|t_i<\infty) \leq \left(1-\frac{\rho^+}{2}\right).
\end{equation}
Therefore, using \eqref{maje} recursively and the strong Markov property we have 
\begin{equation}\label{aymama}
\pr(|\chi^{A,B}_{t_i+\textsf{t}}\cap \mathbb{N}|<2 N;\,t_i<\infty\,\forall \,i)=0.
\end{equation}
Moreover, observe that $
\chi=\{t_i<\infty\, \forall\, i\}
$, thus, by \eqref{aymama} we have   
\begin{align}\label{algoaqui2}
\begin{split}
\pr(\chi)=\pr(t_i<\infty\, \forall\, i)
&=\pr(\exists\, t>\textsf{t}:|\chi^{A,B}_{t}\cap \mathbb{N}|\geq 2 N; t_i<\infty\, \forall\, i)\\
&\leq\pr(\exists\, t>\textsf{t}:|\chi^{A,B}_{t}\cap [M',\infty)|\geq  N;\chi)\leq \pr(\chi),
\end{split}
\end{align}
where in the last equality in \eqref{algoaqui2} we have used the fact that $M'<N$. By the strong Markov property and \eqref{elncito} we have
$$
\pr\left(\begin{array}{l}\exists\, t>\textsf{t}:|\chi^{A,B}_{t}\cap [M',\infty)|\geq  N;\text{ and no point of }\\(\chi^{A,B}_{t}\cap [M',\infty))\times\{t\} \text{ is expanding to the right}
\end{array}\right)\leq \hat{\epsilon}.
$$
This equation and \eqref{algoaqui2} imply that
\begin{align}\label{algoutil3}
\begin{split}
0\leq &\pr(\chi)-\pr\left(\left\lbrace\begin{array}{l}\chi;\exists \, x \text{ and }t, \text{ such that }x\in\chi^{A,B}_{t}\cap [M',\infty),\\
t>0, \text{ and } (x,t)\text{ is expanding to the right}\end{array}\right\rbrace\right)\leq \hat{\epsilon}.
\end{split}
\end{align}
Since the probabilities in  \eqref{algoutil3} do not depend on $\hat{\epsilon}$, and $\hat{\epsilon}$ is arbitrary, we obtain \eqref{semeacabaronloslabel0}.
Due to the symmetry in the construction of the two-type contact process, the proof of \eqref{semeacabaronloslabel} is similar to the proof of \eqref{semeacabaronloslabel0}.
\end{proof}
% We also need the stopping time $\textbf{X}_{k_0}$ defined in \eqref{X}, in this case we take $k_0=\lceil 2\alpha M/\hat{N}+M/\hat{K}\hat{N}\rceil$. We remember that $\textbf{x}=\lfloor \textbf{X}_{k_0}/\hat{K}\hat{N}\rfloor$, the constant $a$ and the event  $ \tilde{\Gamma}_{\textbf{x}-1}(a)\cap \mathcal{R}(\tilde{\Gamma}_{\textbf{x}-1})(a)$ are the same as in Lemma \ref{otroauxiliar}.
Now, we observe that Claim \ref{+cacharra}, \eqref{++cacharra}, and \eqref{morecacharra} imply that for  our choice of $M$  we have
\begin{align*}
\begin{split}
0\leq &\pr(\chi)-\pr\left(\left\lbrace\begin{array}{l}\chi;\exists \, x \text{ and }t, \text{ such that }x\in\chi^{A,B}_{t}\cap [M',M],\\
0<t\leq M, \text{ and } (x,t)\text{ is expanding to the right}\end{array}\right\rbrace\right)\leq \epsilon
\end{split}
\end{align*}
and
\begin{align*}
\begin{split}
0\leq &\pr(\eta)-\pr\left(\left\lbrace\begin{array}{l}\eta; \exists \, y\text{ and }s, \text{ such that }x\in \eta^{A,B}_{s}\cap [-M,-M'],\\
0<s\leq M \text{ and }(y,s)\text{ is expanding to the left}\end{array}\right \rbrace\right)\leq \epsilon.
\end{split}
\end{align*}
Thus, we have that $M$ is  such that the probabilities of the events \eqref{primerevento} and \eqref{segundoevento} are closer to the probabilities of $\chi$ and $\eta$, respectively. Since the event \eqref{tercerevento} has probability larger than $1-2\epsilon$, by Lemma \ref{otroauxiliar} we have 
\begin{equation}\label{buenobueno}
\pr(D\cap V^c)\leq 4 \epsilon. 
\end{equation} 

 Next, we will prove that for all configurations in $V$ we have that  $\zeta^{A,B}_t\equiv \zeta^{\textbf{1},\textbf{2}}_t$ in $E$, for all $t$ large enough.
Observe that, for the configurations in  the event \eqref{primerevento}, there exists a point $(x,t)$ expanding to the right. Therefore, there exists a path in $\Psi$, which we denote by $\gamma$. We denote  by $(i,k)$ the initial point of $\gamma$.  We identify the path $\gamma$ with  a sequence $\{m_j,j\}$ that satisfies $m_j\geq (j-k)/2+i+1 $. We note that by the definition of the event \eqref{primerevento},  we have that $v \leq i\leq M/\hat{N}+2$ and since in  the event \eqref{primerevento} $t\leq M$, it holds $k\leq M/\hat{K}\hat{N}$.

  The event \eqref{segundoevento} implies the existence of  a point $(y,s)$ expanding to the left, and this gives  a path $\beta$ in $\Psi$. We denote by $(\hat{i},\hat{k})$ the initial point of $\beta$, and similar to the path $\gamma$, we have that $- M/\hat{N}-2\leq \hat{i}\leq -v$ and $\hat{k}\leq M/\hat{N}\hat{K}$. We identify the path $\beta$ with the sequence $\{\hat{m}_j, j\}$ that satisfies $\hat{m}_j\leq-(j-\hat{k})/2+\hat{i}+1$.  
  
 The  event \eqref{tercerevento} implies that there exists a path in $\Psi$ that connects $[(\textbf{x}-2)/2+a,\infty)\times\{0\}$ with $(\imath, \textbf{x}-2)$ ($\imath$ depending on $a$ and $\textbf{x}-2$), which we denote by $\hat{\gamma}$.  This path does not intersect the set $\{(m,j): m\leq (\textbf{x}-2)/2-\frac{j}{2}+a\}$. By our choice of $k_0$ and the properties of $i$ and $k$, we have 
  \begin{align*}
 &k\leq M/\hat{K}\hat{N} \leq  k_0-2 \leq \textbf{x}-2,\\
 &i \leq \frac{M}{\hat{N}}+2 \leq\frac{\textbf{x}}{2}-\frac{M}{2\hat{K}\hat{N}}+2\leq  \frac{(\textbf{x}-2)}{2}-\frac{k}{2}+3\leq \frac{(\textbf{x}-2)}{2}-\frac{k}{2}+a\leq \hat{\gamma}(k),\\
 &m_{\textbf{x}-2}\geq i+\frac{(\textbf{x}-2)}{2}-\frac{k}{2} \geq i+\frac{ M}{\hat{N}}\geq i+v+2 \geq a+2.
\end{align*}   
Therefore,   we have that $\hat{\gamma}$ intersects the path $\gamma$, and we denote by $k_1$ the time of the intersection. In Figure \ref{Figura1}, we represent these paths to clarify the definitions. The union of the renormalized boxes that correspond to the part of the path $\hat{\gamma}$ connecting $(m_{k_1},k_1)$ with $(\imath, \textbf{x}-2)$ is denoted by $\textbf{R}_1$. We denote by $\textbf{R}_2$ the union of the renormalized boxes corresponding to the infinite portion of the path $\gamma$ starting at the point $(m_{k_1},k_1)$.

Similarly, by the definition of the event \eqref{tercerevento}, there exists a path $\hat{\beta}$ that connects $(-\infty,-a-(\textbf{x}-2)/2]\times\{0\}$ with $(-\imath,\textbf{x}-2)$, and this path does not intersect the set $\{(m,s): m\geq -a-(\textbf{x}-2)/2+\frac{s}{2}\}$. By similar arguments to those used with the paths $\gamma$ and $\hat{\gamma}$, we conclude that the path $\beta$ intersects the path $\hat{\beta}$, and we denote by $k_2$ the time of the intersection.  We denote by $\textbf{B}_1$ the union of the renormalized boxes that correspond to the portion of the path $\hat{\beta}$ connecting $(\hat{m}_{k_2},k_2)$ with $(-\imath, \textbf{x}-2)$. Also, we denote by $\textbf{B}_2$ the union of the renormalized boxes that correspond to the infinite portion of the path $\beta$ starting at the point $(\hat{m}_{k_2},k_2)$.
\begin{figure}[h]
\begin{center}
  \begin{overpic}[scale=0.3,unit=1mm]{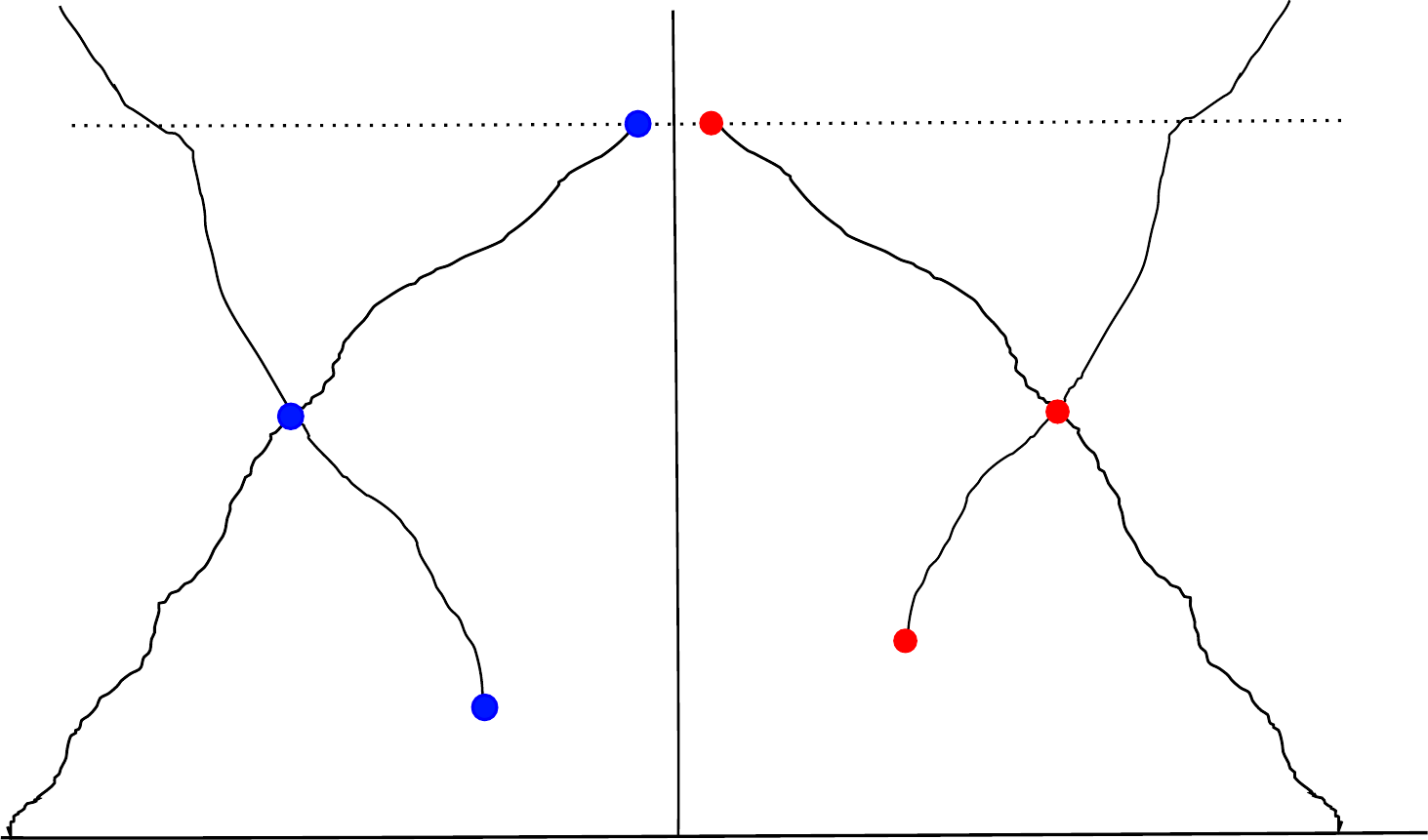}
   % \begin{overpic}[grid,scale=0.8,unit=1mm]{MetaR.pdf}
   \put(-4,21.5){\parbox{0.4\linewidth}{
\begin{tiny}
$\textbf{x}-2$
\end{tiny}}}
\put(37,23){\parbox{0.4\linewidth}{
\begin{tiny}
$\gamma$
\end{tiny}}}
\put(38,2){\parbox{0.4\linewidth}{
\begin{tiny}
$\hat{\gamma}$
\end{tiny}}}
\put(33,12){\parbox{0.4\linewidth}{
\begin{tiny}
$(m_{k_1},k_1)$
\end{tiny}}}
\put(23,5){\parbox{0.4\linewidth}{
\begin{tiny}
$(i,k)$
\end{tiny}}}
\put(22,22.5){\parbox{0.4\linewidth}{
\begin{tiny}
$\imath$
\end{tiny}}}
\put(17,22.5){\parbox{0.4\linewidth}{
\begin{tiny}
$-\imath$
\end{tiny}}}
\put(4,23){\parbox{0.4\linewidth}{
\begin{tiny}
$\beta$
\end{tiny}}}
\put(2.5,2){\parbox{0.4\linewidth}{
\begin{tiny}
$\hat{\beta}$
\end{tiny}}}
\put(10,3){\parbox{0.4\linewidth}{
\begin{tiny}
$(\hat{i},\hat{k})$
\end{tiny}}}
\put(9,12){\parbox{0.4\linewidth}{
\begin{tiny}
$(\hat{m}_{k_2},k_2)$
\end{tiny}}}
\end{overpic}
\caption{ Representation of the paths $\gamma$, $\beta$, $\hat{\gamma}$ and $\hat{\beta}$.}\label{Figura1}
\end{center}
\end{figure}

By the Mountford-Sweet renormalization all the points in $\textbf{R}_1\cup \textbf{R}_2$ connected in the Harris graph to $\mathbb{Z}\times\{0\}$, are connected within $[0,\infty)$ to $I^{\hat{N},\hat{K}}_{(m_{k_1},k_1)}$. Observe that all the connected points in $I^{\hat{N},\hat{K}}_{(m_{k_1},k_1)}$ also are connected to $I^{\hat{N},\hat{K}}_{(i,k)}$ inside $[0,\infty)$. Since $(x,t)$ is expanding to the right,  all the connected points in $I^{\hat{N},\hat{K}}_{(i,k)}$ are connected to $(x,t)$ inside $[0,\infty)$. 
Event \eqref{primerevento} also gives that $(x,t)$ is occupied by a particle of type $2$ for the process $\{\zeta^{A,B}_t\}$. Therefore, all the occupied points in $I^{\hat{N},\hat{K}}_{(i,k)}$, and consequently in $I^{\hat{N},\hat{K}}_{(m_{k_1},k_1)}$, are of type $2$ for the process with initial configuration $\mathds{1}_A+2\mathds{1}_B$. 
On the other hand, using the boxes in the path $\hat{\gamma}$, we have that all the points in $I^{\hat{N},\hat{K}}_{(m_{k_1},k_1)}$ connected in the Harris graph to $\mathbb{Z}\times\{0\}$  are connected to $[\hat{N}\{a+(\textbf{x}-2)/2\}, \infty)\times \{0\}$ inside $[0,\infty)$. Therefore, the connected points in  $I^{\hat{N},\hat{K}}_{(m_{k_1},k_1)}$ are also occupied by particles of type $2$ for the process $\{\zeta^{\textbf{1}, \textbf{2}}_t\}$. Thus, all the connected points in $\textbf{R}_1\cup \textbf{R}_2$ are occupied by particles of type $2$ for both processes. By similar arguments, we have that both processes are equal in the set $\textbf{B}_1\cup \textbf{B}_2$.

\begin{figure}[h]
\begin{center}
  \begin{overpic}[scale=0.4,unit=1mm]{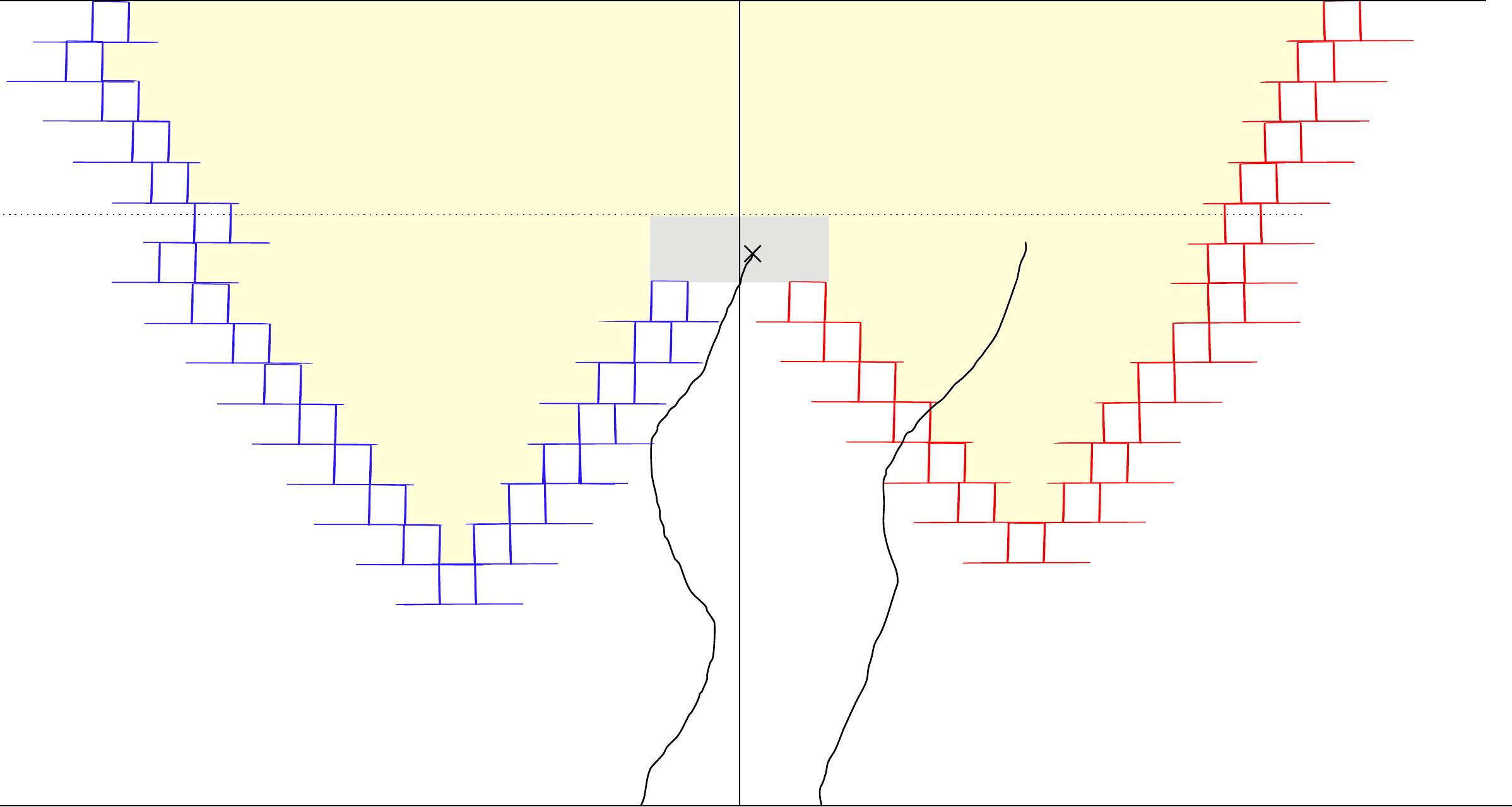}
   % \begin{overpic}[grid,scale=0.8,unit=1mm]{MetaR.pdf}
    \put(-5,37){\parbox{0.4\linewidth}{
\begin{tiny}
$\textbf{X}_{k_0}$
\end{tiny}}}
    \put(5.2,52){\parbox{0.4\linewidth}{
\begin{tiny}
\textcolor{blue}{$\textbf{B}_{2}$}
\end{tiny}}}
    \put(35.2,18.5){\parbox{0.4\linewidth}{
\begin{tiny}
\textcolor{blue}{$\textbf{B}_{1}$}
\end{tiny}}}
    \put(52.2,26.2){\parbox{0.4\linewidth}{
\begin{tiny}
\textcolor{red}{$\textbf{R}_{1}$}
\end{tiny}}}
    \put(84.5,52){\parbox{0.4\linewidth}{
\begin{tiny}
\textcolor{red}{$\textbf{R}_{2}$}
\end{tiny}}}
   \put(35,40){\parbox{0.4\linewidth}{
\begin{tiny}
$I_C$
\end{tiny}}}
\end{overpic}
\caption{ We have represented the region $I_C$ in light yellow. The gray rectangle represents that every path that ends in $I_C$ can not intersect the rectangle $[-M',M']\times[\textbf{X}_{k_0}-\hat{K}\hat{N},\textbf{X}_{k_0}]$.}\label{Figura2}
\end{center}
\end{figure}
Now, we define the set
$$
C=\textbf{B}_2\cup \textbf{B}_1\cup( [-M',M']\times[\textbf{X}_{k_0}-\hat{K}\hat{N},\textbf{X}_{k_0}])\cup \textbf{R}_1\cup \textbf{R}_2.
$$
In  Figure \ref{Figura2}, we represent the set $C$.  Observe that the base of the rectangle $[-M',M']\times[\textbf{X}_{k_0}-\hat{K}\hat{N},\textbf{X}_{k_0}]$  intersects the top of the renormalized boxes $(\imath,\textbf{x}-2)$ and $(-\imath,\textbf{x}-2)$. Since these boxes  are subsets of $\textbf{R}_1$ and $\textbf{B}_1$, respectively, the set $C$ is connected.  Also, observe that the complement of the set $C$ has two connected components in $\mathbb{R}\times [0,\infty)$. We call the \emph{inside of $C$} the connected component that does not have the $(0,0)$,  and we denote it by $I_C$.   All the sets whose union  define  $C$ have a width larger than $R$, therefore every path in the Harris graph that connects $\mathbb{Z}\times \{0\}$ with $I_C$ intersects $C$. We observe  that by our definition of $\textbf{X}_{k_0}$, every path that connects $\mathbb{Z}\times \{0\}$ with $I_C$ can not intersect the rectangle $[-M',M']\times[\textbf{X}_{k_0}-\hat{K}\hat{N},\textbf{X}_{k_0}]$. Hence, each of these paths intersects the sets $\textbf{B}_2\cup \textbf{B}_1$ or $\textbf{R}_1\cup \textbf{R}_2$, and in these two sets, the processes $\{\zeta^{A,B}_t\}$ and $\{\zeta^{\textbf{1}, \textbf{2}}_t\}$ are equal. Thus, these two processes are also equal  in  $I_C$.

It remains to choose a time $\textbf{t}$ such that $E\times[\textbf{t},\infty)$ is a subset of $I_C$. For this purpose, we take $\textbf{t}=\max \{M+4\hat{K}\max E;M-4\hat{K}\min E;\textbf{X}_{k_0}\}+\hat{K}\hat{N}$, and we define $k(s)=\lfloor s/\hat{K}\hat{N}\rfloor$. For every $s\geq \textbf{t}$ we have 
\begin{align*}
&m_{k(s)}\geq m_{k(\textbf{t})}\geq i+1+\frac{k(\textbf{t})-k}{2}\geq i+ \frac{M+4\hat{K}\max E}{2\hat{K}\hat{N}}-\frac{k}{2}\\
&=i+\frac{M}{2\hat{K}\hat{N}}-\frac{k}{2}+\frac{2 \max E}{\hat{N}}\geq \frac{2 \max E}{\hat{N}}\Rightarrow\frac{\hat{N}m_{k(s)}}{2}\geq \max E.
\end{align*}
On the other hand, we have 
\begin{align*}
&\hat{m}_{k(s)}\leq \hat{m}_{k(\textbf{t})}\leq \hat{i}-1-\frac{k(\textbf{t})-\hat{k}}{2}\leq \hat{i}- \frac{M-4\hat{K}\min E}{2\hat{K}\hat{N}}+\frac{\hat{k}}{2}\\
&=\hat{i}-\frac{M}{2\hat{K}\hat{N}}+\frac{\hat{k}}{2}+\frac{2 \min E}{\hat{N}}\leq \frac{2 \min E}{\hat{N}}\Rightarrow\frac{\hat{N}\hat{m}_{k(s)}}{2}\leq \min E.
\end{align*}
The set $E\times\{s\}$ is between the renormalized boxes $(\hat{m}_{k(s)},k(s))$ and $(m_{k(s)},k(s))$, for all $s\geq \textbf{t}$. Observe that these boxes are in the set $C$. Since we also have that $\textbf{t}\geq \textbf{X}_{k_0}$, $E\times\{s\}\subset I_{A}$ for all $s\geq \textbf{t}$. Thus, we have proved

\begin{equation*}
V\subset\{\exists\, \textbf{t}:\zeta^{\textbf{1},\textbf{2}}_s\equiv\zeta^{\zeta_0}_s \text{ in }E\, \forall\, s\geq \textbf{t} \}.
\end{equation*}
The above inclusion and  \eqref{buenobueno} imply 
 \begin{align}\label{buenoesto}
 \pr(D\cap \{\exists\, \textbf{t}:\zeta^{\textbf{1},\textbf{2}}_s\equiv\zeta^{\zeta_0}_s \text{ in }E\, \forall\, s\geq \textbf{t} \}^{c})\leq \pr(D\cap V^{c})\leq 4 \epsilon.
 \end{align}
 Observe that the first probability in \eqref{buenoesto} does not depend on $\epsilon$. Since $\epsilon$ is arbitrary, this probability is zero, and we conclude the proof. 
 \end{proof}

\begin{remark}
The proof of Proposition \ref{lallave} is valid for the contact process with any finite range, even for $R=1$. But in the case $R=1$, the proof became simpler since we do not need the Mountford-Sweet renormalization. Now, we explain the simplifications in the proof for the nearest-neighbor scenario. In this case, we can simplify the definition of expanding as follow, a point $(x,t)$ is \emph{expanding to the right} \emph{(left)} if there is an infinite path to the right (left) of the half-line $y=\frac{\alpha}{2}s+x$ ($y=-\frac{\alpha}{2}s+x$) for $s\geq t$. Proposition \ref{AMPV} and Corollary \ref{CorollaryAMPV} are also valid for this definition of expanding. The first changes in the proof of the proposition is that in the definition of the set $V$ we take $M'=N$ as in \eqref{elncito} and $M$ satisfying \eqref{++cacharra} and \eqref{morecacharra}. Also, we redefine $B_k$ taking $\hat{K}\hat{N}=1$ and $R=1$, and in the definition of the stopping time $\textbf{X}=\textbf{X}_{k_0}$ we take, $k_0= M+\frac{2}{\alpha}M$. Moreover, we change the event \eqref{tercerevento} in the definition of $V$ by the following event
\begin{equation}
\left\lbrace\begin{array}{c}
[1,\infty)\times\{0\}\rightarrow[1,N]\times\{\textbf{X}-1\}\text{ with a path to the right of the line }y=-\frac{\alpha}{2}(s-\textbf{X})\\\text{and }
(-\infty,0]\times\{0\}\rightarrow[-N,0]\times\{\textbf{X}-1\}\text{ with a path to the left of the line }y=\frac{\alpha}{2}(s-\textbf{X})
\end{array}\right\rbrace.
\end{equation}
Using the duality of the Harris construction, \eqref{elncito} and the same ideas in the proof of Lemma \ref{otroauxiliar}, it is possible to conclude that this last event has probability larger than $1-2\epsilon$. Therefore, the probability of the event $V$ is close to the probability of $D$.

The argument for concluding that both processes are equal in a large region $I_C$  is very similar to the one in the proof of the proposition. For the configurations in the event $V$, we have two pairs of paths that intersect each other, one pair in the half-plane $(-\infty,0]\times[0,\infty)$ and the other in $[1,\infty)\times[0,\infty)$. Figure \ref{Figure3} illustrate these paths. The gray rectangle in the middle is the region where no path crosses and comes from the definition of the stopping time $\textbf{X}$. As in the proof of the proposition, the points where the pair of paths intersect are blue and red respectively for both processes, the one with initial configuration $\mathds{1}_A+2\mathds{1}_B$ and the one with initial configuration $\mathds{1}_{(-\infty,0]}+2\mathds{1}_{[1,\infty)}$. Then, if a path ends in the yellow region in the figure, it must cut one of the four paths, and since both processes are equal in these paths, there will be equal also in the yellow region, $I_C$.  The rest of the proof follows as above. 
\begin{figure}[h]
\begin{center}
  \begin{overpic}[scale=0.3,unit=1mm]{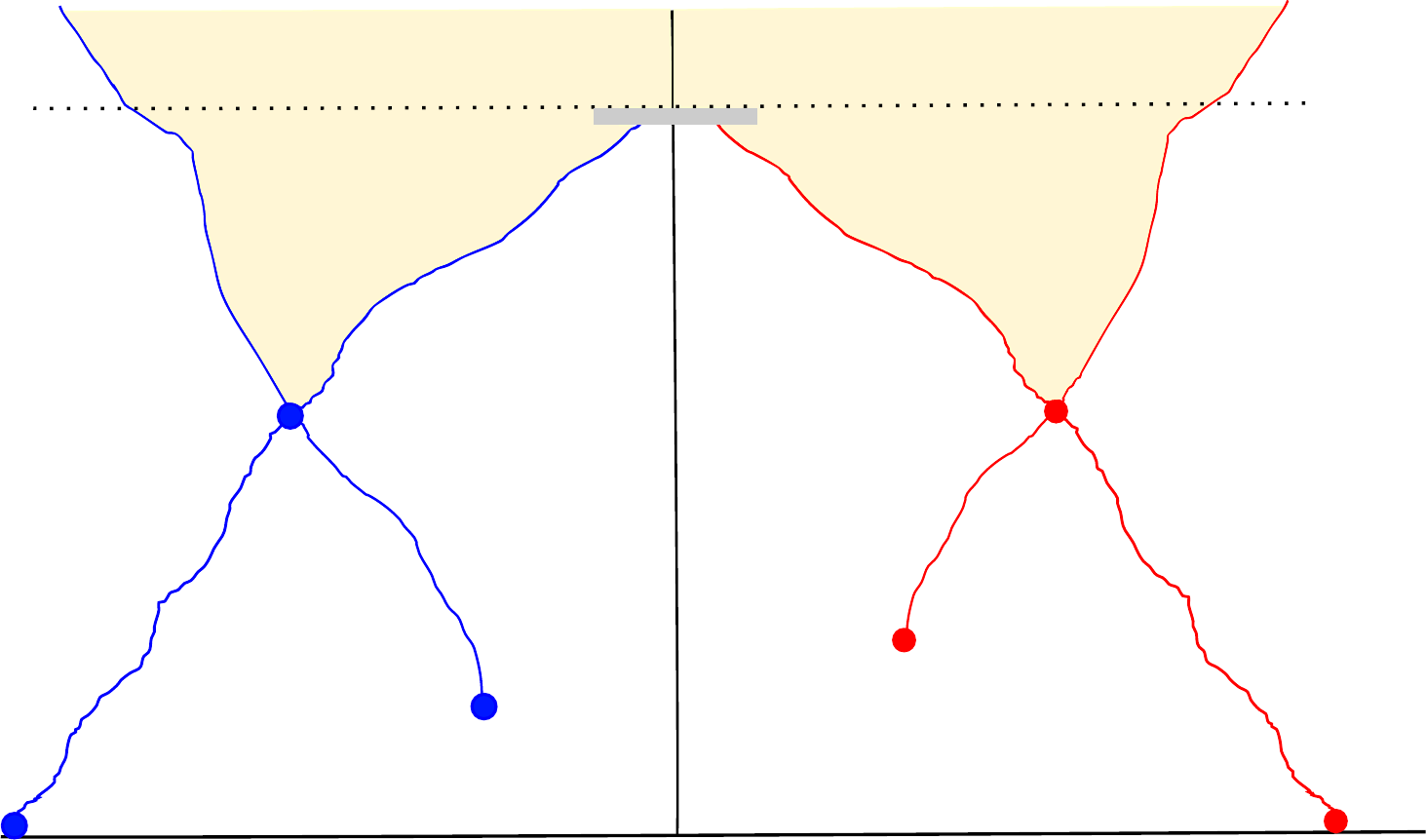}
   % \begin{overpic}[grid,scale=0.8,unit=1mm]{MetaR.pdf}
   \put(-1,22){\parbox{0.4\linewidth}{
\begin{tiny}
$\textbf{X}$
\end{tiny}}}
\end{overpic}
\end{center}
\caption{The yellow region is the region $I_C$ where the two processes coincide. The gray rectangle is the region where no path cross.}\label{Figure3}
\end{figure}
\end{remark}
\begin{proof}[Proof of Theorem \ref{teoconvergence}]
 We take a set $F\in \mathcal{F}$ depending on a finite number of sites in $\mathbb{Z}$, and we denote $E=E(F)$ the set of those sites. We remember that the measure $\nu$ is supported in the set $\mathcal{A}$ defined in \eqref{defmathcalA}.  Since the configurations in $\mathcal{A}$  have infinitely many particles of type $1$ in $(-\infty,0]$  and infinitely many particles of type $2$ in $[1,\infty)$, for the process with initial configuration in $\mathcal{A}$ there will be a particle of type $1$ in $(-\infty,0]$ and a particle of type $2$ in $[1,\infty)$ for all times. This is 
\begin{equation}\label{vamosmari}
\pr(\chi^{\zeta_0}_t\cap [1,\infty)\neq \emptyset\text{ i.o};  \eta^{\zeta_0}_s\cap (-\infty,0]\neq \emptyset\text{ i.o}
)=\pr(D(\zeta_0))=1
\end{equation} 
for all $\zeta_0\in \mathcal{A}$. Equation \eqref{vamosmari} and Proposition \ref{lallave} imply
\begin{equation}\label{vamosmari2}
\pr(\exists \,\textbf{t}:\,\zeta^{\zeta_0}_s\equiv \zeta^{\textbf{1},\textbf{2}}_s \text{ in } E,\, \forall\, s\geq \textbf{t})=1
\end{equation}
for all $\zeta_0\in \mathcal{A}$.   Observe the following calculations 
\begin{align}\label{blubliblu*}
\begin{split}
\lim \limits_{t\rightarrow \infty}\pr(\zeta^{\textbf{1},\textbf{2}}_t\in F)&=\lim\limits_{t\rightarrow \infty}\int_{\zeta_0\in \mathcal{A}}\pr(\zeta^{\textbf{1},\textbf{2}}_{t}\in F)d\nu(\zeta_0)\\
&=\lim\limits_{t\rightarrow \infty}\int_{\zeta_0\in \mathcal{A}}\pr(\zeta^{\textbf{1},\textbf{2}}_{t}\in F;\exists \,\textbf{t}:\,\zeta^{\zeta_0}_s\equiv \zeta^{\textbf{1},\textbf{2}}_s \text{ in } E,\, \forall\, s\geq \textbf{t})d\nu(\zeta_0)\\
&=\int_{\zeta_0\in \mathcal{A}}\lim\limits_{t\rightarrow \infty}\pr(\zeta^{\textbf{1},\textbf{2}}_{t}\in F;\exists \,\textbf{t}:\,\zeta^{\zeta_0}_s\equiv \zeta^{\textbf{1},\textbf{2}}_s \text{ in } E,\, \forall\, s\geq \textbf{t})d\nu(\zeta_0)\\
&=\int_{\zeta_0\in \mathcal{A}}\lim\limits_{t\rightarrow \infty}\pr(\zeta^{\zeta_0}_{t}\in F;\exists \,\textbf{t}:\,\zeta^{\zeta_0}_s\equiv \zeta^{\textbf{1},\textbf{2}}_s \text{ in } E,\, \forall\, s\geq \textbf{t})d\nu(\zeta_0)\\
&=\lim\limits_{t\rightarrow \infty}\int_{\zeta_0\in \mathcal{A}}\pr(\zeta^{\zeta_0}_{t}\in F;\exists \,\textbf{t}:\,\zeta^{\zeta_0}_s\equiv \zeta^{\textbf{1},\textbf{2}}_s \text{ in } E,\, \forall\, s\geq \textbf{t})d\nu(\zeta_0)\\
&=\lim\limits_{t\rightarrow \infty}\int_{\zeta_0\in \mathcal{A}}\pr(\zeta^{\zeta_0}_{t}\in F)d\nu(\zeta_0)=\nu(F).
\end{split}
\end{align}
In the second and sixth equalities of \eqref{blubliblu*} we have used \eqref{vamosmari2}, and in the third and fifth equalities, we have used the  Dominated Convergence Theorem. Since $F$ is any finite dimensional set we have proved that $\zeta^{\textbf{1},\textbf{2}}_t$ converges in distribution to $\nu$.  
\end{proof}
\begin{corollary}\label{el que pensé que no usaba}
For every $\zeta_0\in \mathcal{A}$ we have that
\begin{equation}\label{yacasiportuvida}
\zeta^{\zeta_0}_t\underset{t\rightarrow\infty}{\longrightarrow}\nu \text{ in Distribution.}
\end{equation}
\end{corollary}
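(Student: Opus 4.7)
My plan is to run the same coupling argument as in the proof of Theorem \ref{teoconvergence}, now starting from the single fixed initial configuration $\zeta_0 \in \mathcal{A}$ rather than from the $\nu$-averaged one, and piggybacking on the convergence $\zeta^{\textbf{1},\textbf{2}}_t \to \nu$ that has already been established. Fix a finite-dimensional cylinder event $F$ depending on coordinates in a finite set $E \subset \mathbb{Z}$; it suffices to show that $\pr(\zeta^{\zeta_0}_t \in F) \to \nu(F)$ as $t \to \infty$.

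The first step is to verify $\pr(D(\zeta_0)) = 1$, the analogue of \eqref{vamosmari} for this specific $\zeta_0$. Because $\zeta_0 \in \mathcal{A}$ carries infinitely many type-$2$ particles in $[1,\infty)$, the restriction $\chi^{\zeta_0}_t \cap [1,\infty)$ dominates a supercritical classic contact process on $[1,\infty)$ with an infinite initial configuration, and such a process is non-empty for all times almost surely; the symmetric argument on the left half-line handles $\eta^{\zeta_0}_t \cap (-\infty,0]$. Plugging $\pr(D(\zeta_0)) = 1$ into Proposition \ref{lallave} with the finite set $E$ gives, almost surely, the existence of a random time $\textbf{t}$ such that $\zeta^{\zeta_0}_s \equiv \zeta^{\textbf{1},\textbf{2}}_s$ on $E$ for every $s \geq \textbf{t}$.

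To conclude, I bound the difference of probabilities by the symmetric difference of the events,
\begin{align*}
|\pr(\zeta^{\zeta_0}_t \in F) - \pr(\zeta^{\textbf{1},\textbf{2}}_t \in F)| \leq \pr(\textbf{t} > t),
\end{align*}
which tends to $0$ as $t \to \infty$ since $\textbf{t}$ is almost surely finite, and then invoke Theorem \ref{teoconvergence} to get $\pr(\zeta^{\textbf{1},\textbf{2}}_t \in F) \to \nu(F)$. Because $F$ was an arbitrary finite-dimensional cylinder event, this yields \eqref{yacasiportuvida}. I do not expect any substantive obstacle: Proposition \ref{lallave} and Theorem \ref{teoconvergence} do all the heavy lifting. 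The only subtlety worth double-checking is the input $\pr(D(\zeta_0)) = 1$, which is essentially monotonicity coupled with the standard fact that a supercritical classic contact process on a half-line started from infinitely many particles survives for all times almost surely.
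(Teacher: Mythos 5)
Your proposal is correct and follows essentially the same route as the paper: both rest on $\pr(D(\zeta_0))=1$ for $\zeta_0\in\mathcal{A}$ (the paper's equation \eqref{vamosmari}), feed this into Proposition \ref{lallave} to get almost sure eventual agreement of $\zeta^{\zeta_0}$ and $\zeta^{\textbf{1},\textbf{2}}$ on $E$, and then transfer the convergence from Theorem \ref{teoconvergence}. Your explicit bound $|\pr(\zeta^{\zeta_0}_t\in F)-\pr(\zeta^{\textbf{1},\textbf{2}}_t\in F)|\leq\pr(\textbf{t}>t)$ is just a slightly more quantitative phrasing of the paper's one-line argument.
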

\begin{proof}
Let $F$ be a finite dimensional set and $E$ the set of sites on which the elements of $F$ depend. Equation \eqref{vamosmari2} implies that
$$
\lim\limits_{t\rightarrow\infty}\pr(\zeta^{\zeta_0}_t\in F)=\lim\limits_{t\rightarrow\infty}\pr(\zeta^{\textbf{1},\textbf{2}}_t\in F),
$$
which, combined with  Theorem \ref{teoconvergence}, gives  \eqref{yacasiportuvida}.
\end{proof}
\begin{corollary}There exist two positive constant $\hat{c}$ and $\hat{C}$ such that
\begin{equation}\label{otrodetallito}
\nu(\zeta: \textbf{r}^{1}(\zeta)-\textbf{l}^{2}(\zeta)\geq N)\leq \hat{C}e^{-\hat{c}N},
\end{equation}
for all $N$.
\end{corollary}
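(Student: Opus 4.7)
The plan is to lift the uniform-in-time exponential tail bound for $\textbf{r}^1_t$ given in Proposition~\ref{tightness} (together with its symmetric analogue for $\textbf{l}^2_t$) to the limiting measure $\nu$, using the weak convergence $\zeta^{\textbf{1},\textbf{2}}_t\to\nu$ from Theorem~\ref{teoconvergence} and Portmanteau's theorem. No new renormalization or coupling argument seems to be needed.

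First, I would reduce to one-sided tail estimates via the elementary inclusion
\begin{equation*}
\{\zeta:\textbf{r}^1(\zeta)-\textbf{l}^2(\zeta)\geq N\}\subseteq\{\zeta:\textbf{r}^1(\zeta)\geq N/2\}\cup\{\zeta:\textbf{l}^2(\zeta)\leq -N/2\},
\end{equation*}
which is valid on $\mathcal{A}$ (where both quantities are finite) and hence $\nu$-a.s., since if both $\textbf{r}^1<N/2$ and $-\textbf{l}^2<N/2$ then $\textbf{r}^1-\textbf{l}^2<N$.

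Next, I would observe that the event $\{\zeta:\textbf{r}^1(\zeta)\geq N/2\}=\bigcup_{k\geq N/2}\{\zeta:\zeta(k)=1\}$ is a countable union of cylinder sets, hence is open in the product topology of the compact metric space $(\{0,1,2\}^{\mathbb{Z}},\tilde{\rho})$ used in the proof of the existence of $\nu$; its complement $\{\forall k\geq N/2,\zeta(k)\neq 1\}$ is closed as an intersection of clopen sets. By the Portmanteau theorem applied to the convergence $\zeta^{\textbf{1},\textbf{2}}_t\Rightarrow\nu$,
\begin{equation*}
\nu(\textbf{r}^1\geq N/2)\leq \liminf_{t\to\infty}\pr(\textbf{r}^1_t\geq N/2).
\end{equation*}
By Proposition~\ref{tightness} (with the constant $M$ there and taking $N'=N/(2M)$), for every $t\geq\hat{N}\hat{K}$ we have $\pr(\textbf{r}^1_t\geq N/2)\leq Ce^{-cN/(2M)}$, so the same bound holds for $\nu(\textbf{r}^1\geq N/2)$. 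Applying the symmetry of the Harris construction (which, as noted right above Proposition~\ref{tightness}, gives the same bound for $-\textbf{l}^2_t$), one obtains the analogous estimate $\nu(\textbf{l}^2\leq -N/2)\leq Ce^{-cN/(2M)}$.

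Combining the two one-sided bounds with the inclusion above yields
\begin{equation*}
\nu\bigl(\textbf{r}^1-\textbf{l}^2\geq N\bigr)\leq 2Ce^{-cN/(2M)}=\hat{C}e^{-\hat{c}N}
\end{equation*}
with $\hat{C}=2C$ and $\hat{c}=c/(2M)$. The only point requiring a little care is the openness/closedness issue in step two; once that is settled, the rest is a direct transfer of Proposition~\ref{tightness} through weak convergence. I do not anticipate a real obstacle here because $\nu$ is supported on $\mathcal{A}$, so $\textbf{r}^1$ and $\textbf{l}^2$ are finite $\nu$-a.s., and this is exactly what justifies the inclusion used in the first step.
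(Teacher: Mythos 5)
Your proposal is correct and follows essentially the same route as the paper: one-sided exponential tail bounds from Proposition~\ref{tightness} (and its symmetric analogue for $\textbf{l}^2_t$), a union bound, and passage to the limit via the weak convergence of Theorem~\ref{teoconvergence}. The only difference is that you justify the limiting step explicitly through Portmanteau's theorem and the openness of the one-sided events, a point the paper's proof passes over without comment.
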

\begin{proof}
Proposition \ref{tightness} implies that 
$$
\pr(\textbf{r}^{1}_t\geq N/2)\leq Ce^{-\frac{c}{2M}N}\quad\text{ and }\quad \pr(\textbf{l}^{2}_t\leq N/2)\leq Ce^{-\frac{c}{2M}N},
$$
for $t\geq \hat{K}\hat{N}$ and for all $N$. Therefore 
\begin{equation}\label{undetallito}
\pr(\textbf{r}^{1}_t-\textbf{l}^2_t\geq N)\leq 2 Ce^{-\frac{c}{2M}N}.
\end{equation}
By  Theorem \ref{teoconvergence}, the right-hand side of \eqref{undetallito} converges to the right-hand side of \eqref{otrodetallito} when $t$ goes to infinity. Thus, we have
$$
\nu(\zeta: \textbf{r}^{1}(\zeta)-\textbf{l}^{2}(\zeta)\geq N)\leq 2 Ce^{-\frac{c}{2M}N},
$$
for all $N$, and we conclude the proof of the corollary. 
\end{proof}
 \section{Proof of Theorem 2}\label{Proof theo 2}
 Let $A$ and $B$ be two disjoint subsets of $\mathbb{Z}$. Also, consider $F$ a subset of $\{0,1,2\}^{\mathbb{Z}}$ depending on a finite number of coordinates. We denote by $E=E(F)$ the set of coordinates on which $F$ depends. We define the measure $\mu_1$ as the limit in distribution of $\{\zeta^{\mathbb{Z},\emptyset}_t\}$. The measure $\mu_1$ is supported in $\{\zeta\in \{0,1,2\}^ {\mathbb{Z}}: \, \zeta(x)\neq 2\, \forall\, x \in \mathbb{Z}\}$, and it is essentially the non-trivial invariant measure for the classic contact process in $\{0,1\}^{\mathbb{Z}}$. Similarly, we define the measure $\mu_2$ as the limit in distribution of $\{\zeta^{\emptyset, \mathbb{Z}}_t\}$, which is supported in  $\{\zeta\in \{0,1,2\}^ {\mathbb{Z}}: \, \zeta(x)\neq 1\, \forall\, x \in \mathbb{Z}\}$, and it is also basically the non-trivial invariant measure for the contact process in $\{0,2\}^{\mathbb{Z}}$.
 Also, we define the times of extinction of each type of particles for the two-type process with initial configuration $\mathds{1}_{A}+2\mathds{1}_B$, as follows
 \begin{equation*}
 \tau^{A,B}_1=\inf\{t: \eta^{A,B}_t=\emptyset\}\quad\text{ and }\quad \tau^{A,B}_2=\inf\{t: \chi^{A,B}_t=\emptyset\},
 \end{equation*}
 and we define $\tau^{A,B}=\min\{\tau^{A,B}_1,\tau^{A,B}_2\}$. 

  We divide the proof of Theorem \ref{Teo2} into three lemmas. In Lemma \ref{lemmaequacaoahi} below, we prove that 
 \begin{equation}\label{Formula1}
 \lim\limits_{t\rightarrow \infty}\pr(\zeta^{A,B}_t\in F;\tau^{A,B}_1=\infty;\tau^{A,B}_2<\infty)=\mu_1(F)\pr(\tau^{A,B}_1=\infty;\tau^{A,B}_2<\infty).\end{equation}
This limit gives that if the particles of type  $2$ die out, the process converges to the non-trivial invariant measure for the classic contact process. By the symmetry of our construction, we have the analogous limit if the particles of type $1$ die out. This is
  \begin{equation*}\lim\limits_{t\rightarrow \infty}\pr(\zeta^{A,B}_t\in F;\tau^{A,B}_2=\infty;\tau^{A,B}_1<\infty)=\mu_2(F)\pr(\tau^{A,B}_2=\infty;\tau^{A,B}_1<\infty).\end{equation*}
 Also, it is trivial to see that when the two types of particles die out we have
\begin{equation*}\lim\limits_{t\rightarrow \infty}\pr(\zeta^{A,B}_t\in F;\tau^{A,B}_1<\infty;\tau^{A,B}_2<\infty)=\delta_{\emptyset}(F)\pr(\tau^{A,B}_1<\infty;\tau^{A,B}_2<\infty).
    \end{equation*}
    
Next, we study what happens when both particles survive for all times. First, in Lemma \ref{lemmadummy}, we consider the case when after a random time there is no particle of type $1$ in $(-\infty,0]$, where type $1$ particles  have priority. In this case, after a random time, the two-type process behaves as a Grass-Bushes-Tree process, where the bushes are the particles of type $1$, and the trees are the particles of type $2$. Therefore, the two-type process converges to the measure $\mu_2$, which is supported in the configuration without particles of type $1$. More precisely,  we will prove that
\begin{align}\label{Formula2}\begin{split}
    &\lim\limits_{t\rightarrow \infty}\pr(\zeta^{A,B}_t\in F;\tau^{A,B}=\infty;\exists \,t'\, \forall s\geq t'\eta^{A,B}_s\cap (-\infty,0]=\emptyset)\\
    &=\mu_2(F)\pr(\tau^{A,B}=\infty;\exists\,t'\, \forall s\geq t'\eta^{A,B}_s\cap (-\infty,0]=\emptyset).
    \end{split}
\end{align}
By the symmetry of our construction, when the particles of type  $2$  only survive in $(-\infty,0]$, we have the analogous limit
\begin{align*}\begin{split}
    &\lim\limits_{t\rightarrow \infty}\pr(\zeta^{A,B}_t\in F; \tau^{A,B}=\infty;\exists t'\,\forall s\geq t'\chi^{A,B}_{s}\cap[1,\infty)= \emptyset)\\
    &=\mu_1(F)\pr(\tau^{A,B}=\infty;\exists t'\,\forall s\geq t'\chi^{A,B}_{s}\cap[1,\infty)= \emptyset).
    \end{split}
\end{align*}

Finally, in Lemma \ref{ultimo} we study the case when both types of particles survive and for infinitely large times there are particles of type $1$ in $(-\infty,0]$ and particles of type $2$ in $[1,\infty)$. Specifically,  we will obtain
\begin{align}\label{Formula3}\begin{split}
    &\lim\limits_{t\rightarrow \infty}\pr(\zeta^{A,B}_t\in F; \tau^{A,B}=\infty;\eta^{A,B}_s\cap (-\infty,0]\neq\emptyset\text{ i.o};\chi^{A,B}_{s}\cap[1,\infty)\neq \emptyset\text{ i.o})\\
    &=\nu(F)\pr(\tau^{A,B}=\infty;\eta^{A,B}_s\cap (-\infty,0]\neq\emptyset\text{ i.o};\chi^{A,B}_{s}\cap[1,\infty)\neq \emptyset\text{ i.o}).
    \end{split}
\end{align}

We have covered all the possibilities for the survival or extinction of the two types of particles. Therefore, for an arbitrary finite dimensional set $F$,  $\pr(\zeta^{A,B}_t\in F)$ converges to a convex combination of $\mu_1(F),\,\mu_2(F),\delta_{\emptyset}(F)$ and $\nu(F)$. This is sufficient to obtain Theorem \ref{Teo2}.
\begin{lemma}\label{lemmaequacaoahi}
Let $A$ and $B$ be two disjoint subsets of $\mathbb{Z}$ and let $F$ be a finite dimensional set in $\mathcal{F}$, then \eqref{Formula1} holds.
\end{lemma}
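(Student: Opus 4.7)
The plan is to exploit the fact that, on the event $\{\tau^{A,B}_2<\infty\}$, no type~$2$ particle can ever appear again after time $\tau^{A,B}_2$, so the process reduces to a classic contact process on the type~$1$ particles. Indeed, the flip rates given in the introduction show that both transitions $0\to 2$ and $1\to 2$ require a type~$2$ neighbour, so $\chi^{A,B}_{\tau^{A,B}_2}=\emptyset$ forces $\chi^{A,B}_s=\emptyset$ for all $s\geq \tau^{A,B}_2$. Consequently, the priority rules never fire again, and $\{\eta^{A,B}_s\}_{s\geq \tau^{A,B}_2}$ coincides in law with the classic contact process started at time $\tau^{A,B}_2$ from the $\mathcal{F}_{\tau^{A,B}_2}$-measurable configuration $\eta^{A,B}_{\tau^{A,B}_2}$.

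Given this, I would apply the strong Markov property at the stopping time $\tau^{A,B}_2$. Letting $F_1=\{C\subset\mathbb{Z}:\mathds{1}_C\in F\}$, so that $\mu_1(F)=\mu(F_1)$ by the definition of $\mu_1$, one obtains that on $\{\tau^{A,B}_2<\infty\}$
$$\pr\bigl(\zeta^{A,B}_t\in F;\,\tau^{A,B}_1=\infty\,\big|\,\mathcal{F}_{\tau^{A,B}_2}\bigr)=g_{t-\tau^{A,B}_2}\bigl(\eta^{A,B}_{\tau^{A,B}_2}\bigr),\qquad\text{where }g_s(C):=\pr\bigl(\xi^C_s\in F_1;\,T^C=\infty\bigr).$$
The key analytic input is the complete convergence theorem for the supercritical classic contact process: for every $C\subset\mathbb{Z}$,
$$\lim_{s\to\infty}g_s(C)=\mu(F_1)\,\pr(T^C=\infty).$$

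Taking $t\to\infty$ and using that $t-\tau^{A,B}_2\to\infty$ on $\{\tau^{A,B}_2<\infty\}$, the quantity $g_{t-\tau^{A,B}_2}(\eta^{A,B}_{\tau^{A,B}_2})$ converges pointwise on this event to $\mu_1(F)\,\pr(T^C=\infty)\big|_{C=\eta^{A,B}_{\tau^{A,B}_2}}$. By the same identification of $\{\eta^{A,B}_s\}_{s\geq\tau^{A,B}_2}$ with a classic contact process, the latter equals $\mu_1(F)\,\pr(\tau^{A,B}_1=\infty\mid\mathcal{F}_{\tau^{A,B}_2})$ on $\{\tau^{A,B}_2<\infty\}$. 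Applying bounded convergence (the integrand is dominated by $1$) and then the tower property yields \eqref{Formula1}. The only point requiring genuine care is the invocation of the complete convergence theorem at the random initial configuration $\eta^{A,B}_{\tau^{A,B}_2}$, which is legitimate because $g_s(\cdot)$ is measurable in $C$ and $\eta^{A,B}_{\tau^{A,B}_2}$ is $\mathcal{F}_{\tau^{A,B}_2}$-measurable; the rest of the argument is routine bookkeeping around the observation that type~$2$ particles cannot be resurrected after extinction.
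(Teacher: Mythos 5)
Your argument is correct, but it follows a genuinely different route from the paper. You apply the strong Markov property at the stopping time $\tau^{A,B}_2$ and then invoke the complete convergence theorem for the supercritical finite-range contact process as a black box; this is legitimate (complete convergence for finite-range supercritical contact processes on $\mathbb{Z}$ is known, via Bezuidenhout--Grimmett/Durrett), and it makes the proof short. The paper instead avoids the strong Markov property altogether: it uses the deterministic coupling identity from the graphical construction, namely that on $\{\tau^{A,B}_2<t\}$ the occupied set of $\zeta^{A,B}_t$ is exactly $\xi^{A\cup B}_t$ and consists only of type~$1$ particles, so the event becomes $\{\xi^{A\cup B}_t\in F_1;\,T^{A\cup B}=\infty;\,\tau^{A,B}_2<\infty\}$; it then replaces $\xi^{A\cup B}_t$ by $\xi^{\mathbb{Z}}_t$ on the window $E$ using expanding points and descendancy barriers (Proposition~\ref{AMPV} together with the argument of Proposition~\ref{lallave}), and finally applies the Liggett-style asymptotic-independence argument to factor out $\mu_1(F_1)$. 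In effect the paper re-derives the instance of complete convergence it needs from the Mountford--Sweet renormalization machinery already built, keeping the proof self-contained, while your version trades that for an external citation. Two small points of care in your write-up: the conditional identity $\pr(\zeta^{A,B}_t\in F;\tau^{A,B}_1=\infty\mid\mathcal{F}_{\tau^{A,B}_2})=g_{t-\tau^{A,B}_2}(\eta^{A,B}_{\tau^{A,B}_2})$ only makes sense on $\{\tau^{A,B}_2\le t\}$, so you should first discard $\{t<\tau^{A,B}_2<\infty\}$ (whose probability vanishes as $t\to\infty$, exactly as in the paper's first equality in \eqref{quatroequacaoahi}); and the form of complete convergence you use, $\lim_s\pr(\xi^C_s\in F_1;T^C=\infty)=\mu(F_1)\pr(T^C=\infty)$, requires the additional (easy) observation that $\pr(\xi^C_s\in F_1;T^C<\infty)\to\delta_{\emptyset}(F_1)\pr(T^C<\infty)$.
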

\begin{proof}Observe that 
\begin{align}\begin{split}\label{quatroequacaoahi}
&\lim\limits_{t\rightarrow \infty}\pr(\zeta^{A,B}_t\in F;\tau^{A,B}_1=\infty;\tau^{A,B}_2<\infty)=\lim\limits_{t\rightarrow \infty}\pr(\zeta^{A,B}_t\in F;\tau^{A,B}_1=\infty;\tau^{A,B}_2<t)\\
&=\lim\limits_{t\rightarrow \infty}\pr(\xi^{A\cup B}_t\in F_1;T^{A\cup B}=\infty;\tau^{A,B}_2<t)=\lim\limits_{t\rightarrow \infty}\pr(\xi^{A\cup B}_t\in F_1;T^{A\cup B}=\infty;\tau^{A,B}_2<\infty),
\end{split}
\end{align}where $F_1$ are all the configurations in $F$ that do not have particles of type $2$. The second equality in \eqref{quatroequacaoahi} follows from the fact that if the particles of type $2$ die out, then the process with two types of particles behaves like the classic contact process. Next, we will prove that 
\begin{align}\begin{split}\label{quintaequacaoahi}
    &\lim\limits_{t\rightarrow \infty}\pr(\xi^{A\cup B}_t\in F_1;T^{A\cup B}=\infty;\tau^{A,B}_2<\infty)=\lim\limits_{t\rightarrow \infty}\pr(\xi^{\mathbb{Z}}_t\in F_1;T^{A\cup B}=\infty;\tau^{A,B}_2<\infty).
    \end{split}
\end{align}
%Observe that it is enough to prove that for every $C$ subset of $\mathbb{Z}$, we have that 
%\begin{align}\label{novenaequacaoahi}
%    \lim\limits_{t\rightarrow \infty}\pr(\xi^{C}_t\in F_1;T^{C}=\infty)=\lim\limits_{t\rightarrow \infty}\pr(\xi^{\mathbb{Z}}_t\in F_1;T^{C}=\infty).
%\end{align}
 The limit \eqref{quintaequacaoahi} may be proved in much the same way as Proposition \ref{lallave}. Therefore, we give only the main ideas of the proof. 
 For $\epsilon$ arbitrary,  we choose $N$ as in Proposition \ref{AMPV} and by the strong Markov property, we have
 \begin{equation}\label{decimaequacaoahi}
    \pr(\exists\,s:|\xi^{A\cup B}_s|\geq N;\nexists\, (x,s) \text{ expanding and }x\in \xi^{A\cup B}_s)\leq \epsilon. 
\end{equation}
We use \eqref{sextaequacaoahi} and \eqref{decimaequacaoahi} to obtain
\begin{align*}
&\pr(T^{A\cup B}=\infty)-\pr(T^{A\cup B}=\infty;\exists\,(x,s)\text{ expanding and }x\in \xi^{C}_s)\leq \epsilon,
\end{align*}
therefore, we have
\begin{align}\label{ay}
\begin{split}
&|\pr(\xi^{A\cup B}_t\in F_1;T^{A\cup B}=\infty;\tau^{A,B}_2<\infty)-\pr(\xi^{\mathbb{Z}}_t\in F_1;T^{A\cup B}=\infty;\tau^{A,B}_2<\infty)|\\
&\leq \pr(\xi^{A\cup B}_t\not \equiv \xi^{\mathbb{Z}}_t\text{ in }E;T^{A\cup B}=\infty)\\
&\leq\pr(\xi^{A\cup B}_t\not \equiv \xi^{\mathbb{Z}}_t\text{ in }E;T^{A\cup B}=\infty;\exists\,(x,s)\text{ expanding and }x\in \xi^{A\cup B}_s)+\epsilon.
\end{split}
\end{align}
If we take $t$ large enough such that $E\times\{t\}$ is inside the descendancy barrier of the expanding point $(x,s)$, then $\xi^{C}_t$ is equal to  $\xi^{\mathbb{Z}}_t$ in $E$. Therefore, the probability in the last inequality in \eqref{ay} converges to zero. Since $\epsilon$ is arbitrary, we obtain \eqref{quintaequacaoahi}.

To conclude the proof we observe that the limit
\begin{align*}
    \lim\limits_{t\rightarrow \infty}\pr(\xi^{\mathbb{Z}}_t\in F_1;T^{A\cup B}=\infty;\tau^{A,B}_2<\infty)&=\mu_1(F_1)\pr(T^{A\cup B}=\infty;\tau^{A,B}_2<\infty)\\
    &=\mu_1(F_1)\pr(\tau^{A,B}_1=\infty;\tau^{A,B}_2<\infty)
\end{align*}
follows from the same arguments used in the proof of Theorem $2.28$ page $284$ in \cite{Liggett} for the case when $R=1$.
\end{proof}

\begin{lemma}\label{lemmadummy}
Let $A$, $B$ be two disjoint sets of $\mathbb{Z}$ and let $F$ be a finite dimensional subset in $\mathcal{F}$, then \eqref{Formula2} holds.
\end{lemma}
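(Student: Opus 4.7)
The plan is to establish the coupling analog of Proposition \ref{lallave}, between $\zeta^{A,B}_t$ and the reference process $\zeta^{\emptyset,\mathbb{Z}}_t$: for every finite set $E\subset\mathbb{Z}$,
\begin{equation*}
\pr\!\left(V \cap \left\{ \exists\,\textbf{t}:\, \zeta^{A,B}_s \equiv \zeta^{\emptyset,\mathbb{Z}}_s \text{ in } E,\ \forall\,s\geq \textbf{t} \right\}^{c}\right) = 0,
\end{equation*}
where $V=\{\tau^{A,B}=\infty;\ \exists\, t'\ \forall s\geq t',\ \eta^{A,B}_{s}\cap(-\infty,0]=\emptyset\}$. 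The reference process $\zeta^{\emptyset,\mathbb{Z}}_t$ has no type $1$ particle at any time and its type $2$ set coincides with the classic contact process $\xi^{\mathbb{Z}}_t$, so $\zeta^{\emptyset,\mathbb{Z}}_t$ converges in distribution to $\mu_2$. With the coupling in hand, \eqref{Formula2} will follow by exactly the Dominated Convergence Theorem computation of \eqref{blubliblu*}, integrating against $\mu_2$ and using its invariance, just as in the proof of Theorem \ref{teoconvergence}.

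The key simplification provided by $V$ is that after $t'$ every birth of a type $2$ particle succeeds: in $(-\infty,0]$ there is no type $1$ to block it, and in $[1,\infty)$ priority lets type $2$ overtake type $1$. Consequently, for $t\geq t'$ the set $\chi^{A,B}_t$ evolves as a classic contact process with initial datum $\chi^{A,B}_{t'}$, and since $\tau^{A,B}_2=\infty$ on $V$ this contact process survives. Mimicking Claim \ref{+cacharra} applied to $\chi^{A,B}$ on \emph{both} sides, for $M$ large enough one gets $\pr(V\cap\tilde V^{c})<\epsilon$, where $\tilde V$ is the event that there exist a point $(x,t_1)\in\chi^{A,B}_{t_1}\cap[M',M]$ with $0<t_1\leq M$ expanding to the right and a point $(y,t_2)\in\chi^{A,B}_{t_2}\cap[-M,-M']$ with $0<t_2\leq M$ expanding to the left. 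The crucial difference from Proposition \ref{lallave} is that both expanding points are now of type $2$, reflecting the fact that $\zeta^{\emptyset,\mathbb{Z}}$ has no type $1$.

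Given these expanding points, together with the stopping time $\textbf{X}_{k_0}$ and the percolation event from Lemma \ref{otroauxiliar}, I will build the barriers $\textbf{R}_1\cup\textbf{R}_2$ and $\textbf{B}_1\cup\textbf{B}_2$ together with the sealed rectangle $[-M',M']\times[\textbf{X}_{k_0}-\hat{K}\hat{N},\textbf{X}_{k_0}]$ exactly as in the proof of Proposition \ref{lallave}. Since all four Mountford--Sweet barriers are now associated with type $2$ expansions, every connected point of the full barrier $C$ is type $2$ for $\zeta^{A,B}$ (by the renormalization, which forces connection in the Harris graph to the chosen type $2$ box) and type $2$ for $\zeta^{\emptyset,\mathbb{Z}}$ (trivially, as this process carries only type $2$). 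The same topological argument as in Proposition \ref{lallave} then shows that the two processes coincide inside $I_C$, and the choice of $\textbf{t}$ at the end of that proof guarantees $E\times[\textbf{t},\infty)\subset I_C$. The main obstacle is the left-side analog of Claim \ref{+cacharra} for type $2$: whereas in Proposition \ref{lallave} the left expanding points were provided by the hypothesis of infinitely many type $1$ births in $(-\infty,0]$, here they must come from the post-$t'$ type $2$ cluster reaching $(-\infty,-M']$ and being expanding to the left, which I will obtain via Corollary \ref{CorollaryAMPV} applied to the surviving classic contact process $\chi^{A,B}$, combined with the analog of \eqref{sextaequacaoahi} ensuring that $|\chi^{A,B}_{s}|$ eventually exceeds any fixed threshold.
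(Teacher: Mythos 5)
Your route is genuinely different from the paper's. The paper fixes a deterministic time $t'$, applies the Markov property at $t'$, observes that on $\{\forall s\ge t':\eta^{A,B}_s\cap(-\infty,0]=\emptyset\}$ the restarted process coincides with a Grass--Bushes--Trees process in which type $2$ has priority everywhere, and then imports the interface-tightness result of \cite{Conos} to get convergence of $\tilde{\zeta}^{\textbf{1},\textbf{2}}_t$ to $\mu_2$, finishing with a coupling in the spirit of \eqref{quintaequacaoahi}, the Liggett Theorem~$2.28$ argument, and a ``$3\epsilon$'' step. You instead propose a direct Mountford--Sweet barrier coupling of $\zeta^{A,B}$ against $\zeta^{\emptyset,\mathbb{Z}}$, whose occupied set is exactly $\xi^{\mathbb{Z}}_t$; if it works, this bypasses the external GBT interface result entirely and reduces everything to the complete convergence theorem for the classic contact process, which is an attractive feature. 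The structural observation that on $V$ the set $\chi^{A,B}_t$ evolves, after $t'$, as an unblocked classic contact process is correct and is the right engine for producing type~$2$ expanding points on both sides.

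There are, however, two concrete gaps. First, your event $\tilde V$ asks for the expanding points at times in $(0,M]$ for a fixed $M$, but the random time $t'$ is not controlled by $M$ in your write-up. This matters in an essential way for the \emph{left} barrier: the descendency barrier of a type~$2$ point $(y,t_2)$ with $y\in[-M,-M']$ lives inside $(-\infty,0]$, which is the type~$1$ priority region, so if $t_2<t'$ the type~$2$ infection emanating from $(y,t_2)$ can be blocked by surviving type~$1$ particles and the barrier points can be of type~$1$ for $\zeta^{A,B}$ while they are type~$2$ for $\zeta^{\emptyset,\mathbb{Z}}$; the sealing argument then fails on $\textbf{B}_1\cup\textbf{B}_2$. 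The fix is exactly the reduction the paper performs first: replace $V$ by $V_{t'_0}=\{\tau^{A,B}=\infty;\forall s\ge t'_0:\eta^{A,B}_s\cap(-\infty,0]=\emptyset\}$ for a fixed $t'_0$ with $\pr(V\setminus V_{t'_0})<\epsilon$, and only then choose $M>t'_0$ and require $t_1,t_2\in(t'_0,M]$, so that every barrier box sits at times $\ge t'_0$ where $(-\infty,0]$ is free of type~$1$. As written, the claim $\pr(V\cap\tilde V^c)<\epsilon$ together with the assertion that the whole barrier $C$ is type~$2$ for $\zeta^{A,B}$ is not justified.

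Second, the concluding step is not ``exactly the Dominated Convergence computation of \eqref{blubliblu*}.'' That computation relies on $\pr(D(\zeta_0))=1$ for $\nu$-a.e.\ $\zeta_0$, so the coupling event can be inserted and removed for free. Here $\pr(V)<1$ in general, and after the coupling you still need
$\lim_{t\to\infty}\pr(\zeta^{\emptyset,\mathbb{Z}}_t\in F;V)=\mu_2(F)\pr(V)$,
an asymptotic independence statement between a tail-type event of $\zeta^{A,B}$ and the local state of $\zeta^{\emptyset,\mathbb{Z}}$. This requires approximating $V$ by an event in $\mathcal{F}_{T'}$, applying the Markov property at $T'$, and using convergence of $\xi^{\zeta'}_{t-T'}$ to $\mu$ for the (a.s.\ infinite) random configuration $\zeta'=\xi^{\mathbb{Z}}_{T'}$ --- i.e.\ precisely the Theorem~$2.28$-of-\cite{Liggett} argument that the paper invokes. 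Both gaps are repairable, but they are the places where your sketch currently does not close.
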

\begin{proof}
 First, we prove that for an arbitrary but fixed $t'$ we have that
\begin{align}\begin{split}\label{GBT0}
&\lim \limits_{t\rightarrow\infty}\pr(\tau^{A,B}=\infty;\, \forall s\geq t'\,\eta^{A,B}_s\cap(-\infty,0]=\emptyset; \zeta^{A,B}_t\in F)\\
&=\mu_2(F)\pr(\tau^{A,B}=\infty; \,\forall s\geq t'\,\eta^{A,B}_s\cap(-\infty,0]=\emptyset).
\end{split}
\end{align}
To this aim, taking  $t\geq t'$ and using the Markov property we have
\begin{small}\begin{align}\label{GBT1}
\begin{split}
   & \pr(\tau^{A,B}=\infty;\forall s\geq t'\,\eta^{A,B}_s\cap(-\infty,0]=\emptyset; \zeta^{A,B}_t\in F)\\
   &=\int\limits_{\zeta\in \mathcal{C}} \pr(\tau^{\zeta}=\infty; \, \forall\, s\geq 0 \, \eta^{\zeta}_s\cap (-\infty,0]=\emptyset;\zeta^{\zeta}_{t-t'}\in F)\pr(\tau^{A,B}\geq t'|\zeta^{A,B}_{t'}=\zeta)d\nu_{t'}(\zeta),
\end{split}\end{align}\end{small}where $\mathcal{C}$ is the set of configurations that have at least one site occupied by a particle of type $1$ and at least one site occupied by a particle of type $2$. Also, $\nu_{t'}$ is the distribution of $\zeta^{A,B}_{t'}$.

Observe that in the event $\{\forall s\geq 0,\, \eta^{\zeta}_s\cap(-\infty,0]=\emptyset\}$, the process $\zeta^{\zeta}_t$ behaves as the Grass-Bushes-Trees process, in the case where the particles of type $2$ have the priority in all the environment, and the initial configuration is also $\zeta$. We denote this process by $\tilde{\zeta}^{\zeta}_t$. The same ideas used for the classic contact process to obtain \eqref{quintaequacaoahi} hold for the GBT process to obtain
\begin{align}\label{algo}
\begin{split}
&\lim \limits_{t\rightarrow \infty}\pr(\tau^{\zeta}=\infty;\, \forall\, s\geq 0 \, \eta^{\zeta}_s\cap (-\infty,0]=\emptyset;\tilde{\zeta}^{\zeta}_{t-t'}\in F)\\
&=\lim \limits_{t\rightarrow \infty}\pr(\tau^{\zeta}=\infty;\, \forall\, s\geq 0 \, \eta^{\zeta}_s\cap (-\infty,0]=\emptyset;\tilde{\zeta}^{\textbf{1},\textbf{2}}_{t-t'}\in F).
\end{split}
\end{align}
In \cite{Conos} is proved  the tightness of the interface between the particles of type $2$ and the particles of type $1$ for the GBT process with initial configuration $\mathds{1}_{(-\infty,0]}+2\mathds{1}_{[1,\infty)}$. This result implies that the  process $\tilde{\zeta}^{\textbf{1},\textbf{2}}_t$ converges in distribution to $\mu_2$, which together with the limit \eqref{algo} yields  
\begin{align}\label{GBT2}
\begin{split}
&\lim\limits_{t\rightarrow \infty}\pr(\tau^{\zeta}=\infty; \, \forall\, s\geq 0 \, \eta^{\zeta}_s\cap (-\infty,0]=\emptyset;\zeta^{\zeta}_{t-t'}\in F)\\
&=\lim\limits_{t\rightarrow \infty}\pr(\tau^{\zeta}=\infty; \, \forall\, s\geq 0 \, \eta^{\zeta}_s\cap (-\infty,0]=\emptyset;\tilde{\zeta}^{\zeta}_{t-t'}\in F)\\
&=\lim \limits_{t\rightarrow \infty}\pr(\tau^{\zeta}=\infty;\, \forall\, s\geq 0 \, \eta^{\zeta}_s\cap (-\infty,0]=\emptyset;\tilde{\zeta}^{\textbf{1},\textbf{2}}_{t-t'}\in F)\\
&= \mu_2(F)\pr(\tau^{\zeta}=\infty; \, \forall\, s\geq 0 \, \eta^{\zeta}_s\cap (-\infty,0]=\emptyset),
\end{split}
\end{align}
for all $\zeta\in \mathcal{C}$. The last equality in \eqref{GBT2} is a consequence of the convergence in distribution of the process $\tilde{\zeta}^{\textbf{1},\textbf{2}}_t$ to the measure $\mu_2$ and the arguments used to obtain $(2.29)$ in the proof of Theorem $2.28$ page $284$ in \cite{Liggett}. Using the limit \eqref{GBT2}, the Dominated Convergence Theorem, and the Markov property in \eqref{GBT1} we obtain \eqref{GBT0}.

To conclude the lemma, it is sufficient to take $t'$ such that $\pr(\exists \, \bar{t}:\hspace{1mm}\forall s\geq \bar{t}\hspace{1.5mm}\eta^{A,B}_s\cap(-\infty,0]=\emptyset)-\pr(\forall s\geq t'\,\eta^{A,B}_s\cap(-\infty,0]=\emptyset)<\epsilon$ and by a ``$3\epsilon$" argument we have 
\begin{align}\label{lemmadummy3}\begin{split}
    &|\pr(\tau^{A,B}=\infty;\exists \,\bar{t}:\hspace{1mm}\forall s\geq \bar{t}\hspace{1.5mm}\eta^{A,B}_s\cap(-\infty,0]=\emptyset; \zeta^{A,B}_t\in F)\\
   &\quad-\mu_2(F)\pr(\tau^{A,B}=\infty;\exists \, \bar{t}:\forall s\geq \bar{t}\,\eta^{A,B}_s\cap(-\infty,0]=\emptyset)|\\
   &\leq 2\epsilon+|\pr(\tau^{A,B}=\infty;\,\forall s\geq t'\,\eta^{A,B}_s\cap(-\infty,0]=\emptyset; \zeta^{A,B}_t\in F)\\
   &\quad-\mu_2(F)\pr(\tau^{A,B}=\infty;\,\forall s\geq t'\,\eta^{A,B}_s\cap(-\infty,0]=\emptyset)|.
\end{split}
\end{align}
Taking the limit when $t$ goes to infinity on both sides of the inequality \eqref{lemmadummy3}, and then taking $\epsilon$ close to zero, we obtain \eqref{Formula2}.
\end{proof}
\begin{lemma}\label{ultimo}
Let $A$, $B$ be two finite and disjoint sets of $\mathbb{Z}$ and let $F$ be a finite dimensional subset in $\mathcal{F}$, then \eqref{Formula3} holds.
\end{lemma}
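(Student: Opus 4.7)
The plan is to mirror the computation \eqref{blubliblu*} in the proof of Theorem~\ref{teoconvergence}, carrying the indicator of $D:=\{\tau^{A,B}=\infty\}\cap\{\eta^{A,B}_s\cap(-\infty,0]\neq\emptyset \text{ i.o.}\}\cap\{\chi^{A,B}_s\cap[1,\infty)\neq\emptyset \text{ i.o.}\}$ through every step.

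The first reduction applies Proposition~\ref{lallave} to the initial configuration $(A,B)$: it says that, up to a $\pr$-null set, $D$ is contained in $\{\exists\,\textbf{t}:\;\zeta^{A,B}_s\equiv\zeta^{\textbf{1},\textbf{2}}_s \text{ in } E \text{ for all } s\ge\textbf{t}\}$, and since $\pr(D;\textbf{t}>t)\to 0$ this yields
\[
\pr(\zeta^{A,B}_t\in F;\,D)=\pr(\zeta^{\textbf{1},\textbf{2}}_t\in F;\,D)+o(1).
\]
For the second reduction I would augment the probability space with an auxiliary initial configuration $\zeta_0\sim\nu$, sampled independently of the Harris graph, and drive $\zeta^{\zeta_0}$ with the same Harris graph used for $\zeta^{A,B}$ and $\zeta^{\textbf{1},\textbf{2}}$. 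Because $\nu(\mathcal{A})=1$ and $\pr(D_{\zeta_0})=1$ for every $\zeta_0\in\mathcal{A}$ (the argument from \eqref{vamosmari}), a second application of Proposition~\ref{lallave} (now to $\zeta_0$) produces the coupling $\zeta^{\zeta_0}_s\equiv\zeta^{\textbf{1},\textbf{2}}_s$ on $E$ for all large $s$, $(\pr\otimes\nu)$-almost surely. Using the trivial identity $\pr(\zeta^{A,B}_t\in F;D)=\int\pr(\zeta^{A,B}_t\in F;D)\,d\nu(\zeta_0)$ (the integrand does not depend on $\zeta_0$) and combining both couplings, I obtain
\[
\pr(\zeta^{A,B}_t\in F;\,D)=\int\pr(\zeta^{\zeta_0}_t\in F;\,D)\,d\nu(\zeta_0)+o(1).
\]

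Finally, since $\zeta_0$ is independent of the Harris graph (and hence of $D$), Fubini gives $\int\pr(\zeta^{\zeta_0}_t\in F;D)\,d\nu(\zeta_0)=E[\mathds{1}_D\,\Phi_t]$ with $\Phi_t:=\nu(\{\zeta_0:\zeta^{\zeta_0}_t\in F\})$ satisfying $E[\Phi_t]=\nu(F)$ for every $t$ by the invariance of $\nu$. The main obstacle will be to show that $\operatorname{Cov}(\mathds{1}_D,\Phi_t)\to 0$ as $t\to\infty$: although $\Phi_t$ has constant mean $\nu(F)$, the random variables $\Phi_t$ and $\mathds{1}_D$ are coupled through the shared Harris graph, and $\Phi_t$ does not concentrate at its mean. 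I would close this step by a $3\epsilon$-argument in the spirit of the end of the proof of Lemma~\ref{lemmadummy}, first using Proposition~\ref{lallave} to show that $\Phi_t-\mathds{1}_{\zeta^{\textbf{1},\textbf{2}}_t\in F}\to 0$ in $L^1$ and then invoking Theorem~\ref{teoconvergence} to identify the limit of $E[\mathds{1}_D\,\Phi_t]$ as $\pr(D)\nu(F)$, thereby yielding \eqref{Formula3}.
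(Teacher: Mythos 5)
Your first reduction (applying Proposition \ref{lallave} to pass from $\zeta^{A,B}$ to $\zeta^{\textbf{1},\textbf{2}}$ on the event $D$) is exactly the paper's first step, and your observation that $\Phi_t-\mathds{1}_{\{\zeta^{\textbf{1},\textbf{2}}_t\in F\}}\to 0$ in $L^1$ is correct. The gap is in how you close the argument. That $L^1$ convergence only gives $E[\mathds{1}_D\,\Phi_t]=\pr(\zeta^{\textbf{1},\textbf{2}}_t\in F;\,D)+o(1)$, i.e.\ it returns you to the very quantity whose limit you are trying to compute: your second and third steps together are a round trip. Invoking Theorem \ref{teoconvergence} at that point cannot finish the proof, because the theorem controls only the marginal $\pr(\zeta^{\textbf{1},\textbf{2}}_t\in F)\to\nu(F)$ and says nothing about the joint probability with $D$, which is correlated with the time-$t$ configuration through the shared Harris graph. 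The asymptotic factorization $\pr(\zeta^{\textbf{1},\textbf{2}}_t\in F;\,D)\to\nu(F)\pr(D)$ is the actual content of the lemma; you correctly flag this decorrelation as ``the main obstacle,'' but the argument you sketch to overcome it is circular.

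The mechanism the paper uses, and which is missing from your proposal, is a finite-time approximation combined with the Markov property. The event $D=\chi\cap\eta$ is an ``infinitely often'' event and is not well approximated from inside by $\mathcal{F}_{T'}$-measurable events, so the paper instead proves the three complementary limits \eqref{lim1}--\eqref{lim3}: the events $\chi^{c}$, $\eta^{c}$, $\chi^{c}\cap\eta^{c}$ are ``eventually always'' events and can be approximated within $\epsilon$ by events $B[T,T']\in\mathcal{F}_{T'}$. For such an event one conditions on $\zeta^{\textbf{1},\textbf{2}}_{T'}$, factorizes $\pr(\zeta^{\textbf{1},\textbf{2}}_t\in F;B[T,T'])$ as an integral of $\pr(\zeta^{\zeta_0}_{t-T'}\in F)$ against the conditional law, and applies Corollary \ref{el que pensé que no usaba} (convergence of $\zeta^{\zeta_0}_t$ to $\nu$ for every $\zeta_0\in\mathcal{A}$) together with dominated convergence to get the limit $\nu(F)\pr(B[T,T'])$. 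Combining the three limits with $\pr(\zeta^{\textbf{1},\textbf{2}}_t\in F)\to\nu(F)$ by inclusion-exclusion yields \eqref{sidiosquiereyatermino0}. Some version of this conditioning-at-a-finite-time step is indispensable to break the dependence between $\{\zeta^{\textbf{1},\textbf{2}}_t\in F\}$ and $D$, and your proposal does not supply it.
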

\begin{proof}
Consider the  sets $\chi$, $\eta$ and  $D$ defined in \eqref{losD}. Also, define $\textbf{t}$ as the first time such that $\zeta^{A,B}_s\equiv \zeta^{\textbf{1},\textbf{2}}_s\text{ in }E,\,\forall\, s\geq\textbf{t}$. We take $t$ large enough such that
\begin{equation*}
\pr(\exists\, \bar{t}: \, \zeta^{A,B}_s\equiv \zeta^{\textbf{1},\textbf{2}}_s\text{ in }E\,\forall\, s\geq\bar{t};\,t\leq\textbf{t})=\pr(t\leq \textbf{t}<\infty)\leq \epsilon.
\end{equation*}
By Proposition \ref{lallave} and our choice of $t$ we have 
\begin{align*}
\begin{split}
\left|\pr(\zeta^{A,B}_t\in F;D)-\pr(\zeta^{\textbf{1},\textbf{2}}_t\in F;D)\right|&\leq\pr(\zeta^{A,B}_t\not\equiv \zeta^{\textbf{1},\textbf{2}}_t\text{ in }E;D)\\
&\leq \pr(\zeta^{A,B}_t\not\equiv \zeta^{\textbf{1},\textbf{2}}_t\text{ in }E;\exists\, \bar{t}: \, \zeta^{A,B}_s\equiv \zeta^{\textbf{1},\textbf{2}}_s\text{ in }E\,\forall\, s\geq\bar{t})\\
&=\pr(\zeta^{A,B}_t\not\equiv \zeta^{\textbf{1},\textbf{2}}_t\text{ in }E;\,\textbf{t}<\infty)\leq\pr(t\leq\textbf{t}<\infty)\leq \epsilon.
\end{split}
\end{align*}
Then, it is sufficient to prove  
\begin{equation}\label{sidiosquiereyatermino0}
\lim\limits_{t\rightarrow \infty}\pr(\zeta^{\textbf{1},\textbf{2}}_t\in F;D(A,B))=\nu(F)\pr(D(A,B)).
\end{equation}
To obtain \eqref{sidiosquiereyatermino0}  we prove the following limits
\begin{align}
&\lim \limits_{t\rightarrow\infty} \pr(\zeta^{\textbf{1},\textbf{2}}_t\in F;\chi^{c})=\nu(F)\pr(\chi^{c})\label{lim1}\\
&\lim \limits_{t\rightarrow\infty} \pr(\zeta^{\textbf{1},\textbf{2}}_t\in F;\eta^{c})=\nu(F)\pr(\eta^{c})\label{lim2}\\
&\lim \limits_{t\rightarrow\infty} \pr(\zeta^{\textbf{1},\textbf{2}}_t\in F;\chi^{c}\cap \eta^c)=\nu(F)\pr(\chi^{c}\cap \eta^{c}).\label{lim3}
 \end{align}
These limits, together with the fact that $\zeta^{\textbf{1},\textbf{2}}_t$ converges in distribution to $\nu$, imply \eqref{sidiosquiereyatermino0}.
The idea to obtain the limits \eqref{lim1}, \eqref{lim2} and \eqref{lim3} is the same for all of them.  First, we approximate the probability of the event that does not depend on $t$ by the probability of an event that depends on a finite time, and then we use the Markov property. Since the proofs are very similar, we only give the details of the limit \eqref{lim3}.   We take $T$ and $T'$ such that
\begin{align*}
\pr(\chi^{c}\cap \eta^{c})-\pr(\,\eta^{A,B}_s\cap(-\infty,0]=\emptyset,\, \chi^{A,B}_s\cap[1,\infty)=\emptyset\,\forall\,s \in [T, T'])\leq \epsilon.
\end{align*}
To simplify notation, we denote the event in the second probability by $B[T,T']$. We observe that $B[T,T']$ is an event in $\mathcal{F}_{T'}$. Therefore, for $t\geq T'$ we can use the Markov property as follows
\begin{align*}
\pr(\zeta^{\textbf{1},\textbf{2}}_t\in F; B[T,T'])&=\int\pr(\zeta^{\textbf{1},\textbf{2}}_t\in F|\zeta^{\textbf{1},\textbf{2}}_{T'}=\zeta_0)\pr(B[T,T']|\zeta^{\textbf{1},\textbf{2}}_{T'}=\zeta_0)d\nu_{T'}(\zeta_0)\\
&=\int\pr(\zeta^{\zeta_0}_{t-T'}\in F)\pr(B[T,T']|\zeta^{\textbf{1},\textbf{2}}_{T'}=\zeta_0)d\nu_{T'}(\zeta_0)\\
&=\int_{\zeta_0\in \mathcal{A}}\pr(\zeta^{\zeta_0}_{t-T'}\in F)\pr(B[T,T']|\zeta^{\textbf{1},\textbf{2}}_{T'}=\zeta_0)d\nu_{T'}(\zeta_0)
\end{align*}
where $\nu_{T'}$ is the law of $\zeta^{\textbf{1},\textbf{2}}_{T'}$. Taking the limit when $t$ goes to infinity and using the Dominated Convergence Theorem and Corollary \ref{el que pensé que no usaba}, we have
\begin{align*}
\lim\limits_{t\rightarrow\infty}\pr(\zeta^{\textbf{1},\textbf{2}}_t\in F; B[T,T'])&=\int_{\zeta_0\in \mathcal{A}}\nu(F)\pr(B[T,T']|\zeta^{\textbf{1},\textbf{2}}_{T'}=\zeta_0)d\nu_{T'}(\zeta_0)\\
&=\nu(F)\int\pr(B[T,T']|\zeta^{\textbf{1},\textbf{2}}_{T'}=\zeta_0)d\nu_{T'}(\zeta_0)\\
&=\nu(F)\pr(B[T,T']).
\end{align*}
Hence, for $t$ large enough we have
\begin{align}\label{sidiosquiereyatermino1}
\begin{split}
&|\pr(\zeta^{\textbf{1},\textbf{2}}_t\in F;\chi^{c}\cap \eta^{c})-\nu(F)\pr(\chi^{c}\cap \eta^{c}))|\leq |\pr(\zeta^{\textbf{1},\textbf{2}}_t\in F;\chi^{c}\cap \eta^{c})-\pr(\zeta^{\textbf{1},\textbf{2}}_t\in F; B[T,T'])|\\
&+|\pr(\zeta^{\textbf{1},\textbf{2}}_t\in F; B[T,T'])-\nu(F)\pr(B[T,T'])|+|\nu( F)(\pr(B[T,T']-\pr(\chi^{c}\cap \eta^{c}))|\\
&\leq 2\epsilon+|\pr(\zeta^{\textbf{1},\textbf{2}}_t\in F; B[T,T'])-\nu(F)\pr(B[T,T'])|,
\end{split}
\end{align}
where in the last inequality of \eqref{sidiosquiereyatermino1} we have used our choice of $T$ and $T'$. Therefore
\begin{align*}
&\limsup\limits_{t\rightarrow\infty}|\pr(\zeta^{\textbf{1},\textbf{2}}_t\in F;\chi^{c}\cap \eta^{c})-\nu(F)\pr(\chi^{c}\cap \eta^{c})|\leq 2\epsilon.
\end{align*}
\end{proof}

% \textbf{Acknowledgements:}%
%
%\bibliographystyle{acm}
%\addcontentsline{toc}{part}{Bibliography}
%\bibliography{reference}
%Mariela Pent\'on Machado, Instituto de Matem\'atica e Estatística. Universidade de São Paulo, SP, Brazil. Email: mariela@dme.ufrj.br.
%
%\end{document}

%%%%%%%%%%%%%%%%%%%%%%%%%%%%%%%%%%%%%%%%%%%%%%
%% Single Appendix:                         %%
%%%%%%%%%%%%%%%%%%%%%%%%%%%%%%%%%%%%%%%%%%%%%%
%\begin{appendix}
%\section*{???}%% if no title is needed, leave empty \section*{}.
%\end{appendix}
%%%%%%%%%%%%%%%%%%%%%%%%%%%%%%%%%%%%%%%%%%%%%%
%% Multiple Appendixes:                     %%
%%%%%%%%%%%%%%%%%%%%%%%%%%%%%%%%%%%%%%%%%%%%%%
%\begin{appendix}
%\section{???}
%
%\section{???}
%
%\end{appendix}

%%%%%%%%%%%%%%%%%%%%%%%%%%%%%%%%%%%%%%%%%%%%%%
%% Support information (funding), if any,   %%
%% should be provided in the                %%
%% Acknowledgements section.                %%
%%%%%%%%%%%%%%%%%%%%%%%%%%%%%%%%%%%%%%%%%%%%%%
 \section*{Acknowledgements}
The author was supported by FAPESP grant $20/02662-4$ post doctoral fellowship. The author thanks  Majela Pent\'on Machado for a careful reading of this work and the many constructive suggestions which improved the exposition considerably. The author also thanks Enrique Andjel and Maria Eulalia Vares for the helpful comments during the preparation of this paper.
\bibliographystyle{acm}
\addcontentsline{toc}{part}{Bibliography}
\bibliography{arxiv.bbl}

\end{document}